\theoremstyle{plain}
\newtheorem{theorem}{Theorem}[section]
\newtheorem{proposition}[theorem]{Proposition}
\newtheorem{lemma}[theorem]{Lemma}
\newtheorem{corollary}[theorem]{Corollary}
\newtheorem*{proposition*}{Proposition}
\newtheorem{apptheorem}{Theorem}[subsection]
\newtheorem{appproposition}[apptheorem]{Proposition}
\newtheorem{applemma}[apptheorem]{Lemma}
\theoremstyle{definition}
\newtheorem{definition}[theorem]{Definition}
\newtheorem{example}[theorem]{Example}
\newtheorem{appdefinition}[apptheorem]{Definition}
\theoremstyle{remark}
\newtheorem{remark}[theorem]{Remark}
\newcommand{\secref}[1]{Section~\ref{#1}}
\newcommand{\appref}[1]{Appendix~\ref{#1}}
\newcommand{\thmref}[1]{Theorem~\ref{#1}}
\newcommand{\propref}[1]{Proposition~\ref{#1}}
\newcommand{\lemref}[1]{Lemma~\ref{#1}}
\newcommand{\corref}[1]{Corollary~\ref{#1}}
\newcommand{\exref}[1]{Example~\ref{#1}}
\newcommand{\remref}[1]{Remark~\ref{#1}}
\newcommand{\defref}[1]{Definition~\ref{#1}}
\newcommand{\figref}[1]{Figure~\ref{#1}}
\numberwithin{subsection}{section}
\newcommand{\R}{\mathbb{R}}
\newcommand{\Z}{\mathbb{Z}}
\begin{document}

\title[Digital Fundamental Group]{A Fundamental Group for Digital Images}

\author{Gregory Lupton}
\author{John Oprea}
\author{Nicholas A. Scoville}

\address{Department of Mathematics, Cleveland State University, Cleveland OH 44115 U.S.A.}

\email{g.lupton@csuohio.edu}
\email{j.oprea@csuohio.edu}

\address{Department of Mathematics and Computer Science, Ursinus College, Collegeville PA 19426 U.S.A.}

\email{nscoville@ursinus.edu}

\date{\today}

\keywords{Digital Image, Digital Topology,  Subdivision,  Digital Fundamental Group, Lusternik-Schnirelmann category}
\subjclass[2010]{ (Primary) 54A99 55M30 55P05 55P99;  (Secondary) 54A40 68R99 68T45 68U10}

\begin{abstract}   We define a fundamental group for digital images. Namely, we construct a functor from digital images to groups, which
closely resembles the ordinary fundamental group from algebraic topology.  Our construction
differs in several basic ways from previously established versions of a fundamental group in the digital setting.   Our development gives a prominent role to
subdivision of digital images.  We show that our fundamental group is preserved by subdivision.
\end{abstract}

\thanks{This work was partially supported by grants from the Simons Foundation: (\#209575 to Gregory Lupton
and \#244393 to John Oprea).}

\maketitle

\section{Introduction}

In digital topology, the basic object of interest  is a  \emph{digital image}: a finite set of integer lattice points in an ambient Euclidean space with a suitable adjacency relation between points.  This is an abstraction of an actual digital image which consists of  pixels (in the plane, or higher dimensional analogues of such).
There is an extensive literature on this topic with many results that bring notions from topology into this setting (e.g. \cite{Ro86, Bo99, Evako2006}).
In many instances, however, these notions from topology have been translated directly into the digital setting in a way that results in digital versions of topological notions  that  are very rigid   and hence have limited applicability.
 In contrast to this existing literature, in \cite{LOS19a, LOS19b} and in this paper, we have started to  build a  general ``digital homotopy theory" that brings the full strength of homotopy theory to the digital setting.  We use less rigid constructions, with a view towards broad applicability and greater depth of development.  A basic ingredient in our approach is \emph{subdivision}, which is a main focus of \cite{LOS19b}.  As part of this broad program, we focus here on the fundamental group.  The fundamental group is not new in digital topology (see \cite{Kong89, Bo99}, for example).  But our approach and development here differs from versions previously used in digital topology.  In fact, we end up with an invariant that differs from that of \cite{Bo99} for basic examples of digital images.  This difference appears to derive from the way in which we define our notion of homotopy, which  differs from that often used in the literature.  These points are explained at greater length below.    Furthermore, the particulars of our development together with our emphasis on subdivision as a basic ingredient allows us to go significantly beyond the kinds of results that have been established for previous versions of the fundamental group.  For example, we show that subdivision of a digital image preserves our fundamental group.

Much of the material we present here, through the definition of the fundamental group and some of its basic properties (such as behaviour with respect to products---\thmref{thm: products}), is independent of the material in  \cite{LOS19a, LOS19b}.  Most of the references we make to these other papers are for non-essential purposes in the context of examples or discussion.  But two of our main results here do use results from those papers.  Namely \thmref{thm: rho induces iso on pi1}, in which we show that subdivision preserves the fundamental group, depends on results about subdivision of maps and homotopies from \cite{LOS19b}.  Then  \thmref{thm: pi1D}, in which we calculate the fundamental group of a certain digital circle to be $\Z$, uses some material from \cite{LOS19a} about a digital version of the winding number.  
These  results from \cite{LOS19a, LOS19b} to which we refer here have lengthy proofs, or involve establishing  a great deal of notation, or both.  We have found it unfeasible to include all results and their proofs in a single paper.   Where possible, we have tried to keep overlap between this paper and \cite{LOS19a, LOS19b} to a minimum.   
The r{\'e}sum{\'e} of basic notions here (\secref{sec: basics}) and the material collected in \appref{Appx: technical} cover vocabulary and concepts that are also covered in \cite{LOS19a, LOS19b}.  However, because we consider the fundamental group here, we use based maps and homotopies, whereas in  \cite{LOS19a, LOS19b} we generally use unbased maps and homotopies.  So some of the basic results we collect here,  whilst superficially the same as ones of \cite{LOS19a, LOS19b}, are actually technically different from their unbased counterparts.  Overall, it seems reasonable to present this material on the fundamental group, along with necessary background on based maps and homotopies, separately from the material of  \cite{LOS19a, LOS19b}.

The paper is organized as follows.  In \secref{sec: basics} we review standard definitions and terminology, and set various conventions.  Items reviewed here include adjacency, products, homotopy, subdivision, and a combination of the latter two which we call subdivision homotopy, all from a based point of view. Our intention is to be brief here, in order to get to the main point---the fundamental group---as soon as possible.  For the same reason---to focus on the main points--- we have postponed to appendices the lengthy proof of a technical result that is necessary for one of our main results, and also some background material on based maps and homotopies.  
The main results are in \secref{sec: pi-one}, where we define our version of the fundamental group and establish some of its basic properties:
\thmref{thm: pi1} establishes our fundamental group;  \thmref{thm: indep basept} shows that it is independent of the choice of basepoint;  \thmref{thm: products} shows that it preserves products, in the same sense as for the ordinary fundamental group of topological spaces.
 In  \thmref{thm: rho induces iso on pi1}, we show that subdivision preserves the fundamental group.  Several consequences flow from this result. For instance, in \thmref{thm: subdn htpy equiv iso pi1} we show that the fundamental group is preserved by by a relation that is much less rigid than the relation of homotopy equivalence usually used in digital topology.   We give some basic calculations of the fundamental group in \corref{cor: subd contr pi1 triv} and \thmref{thm: pi1D}.
In a brief \secref{sec: Future Work}, we indicate some directions for future work.
\appref{sec: technical} contains the proof of a technical result required for the proof of \thmref{thm: rho induces iso on pi1}, In \appref{Appx: technical} we have expanded somewhat on the review of basics offered in \secref{sec: basics} and given the statements of two results from \cite{LOS19b}.  The material in  \appref{Appx: technical} is cited at various points in the main body, as well as in the proof of \appref{sec: technical}. 

\section{Basic Notions}\label{sec: basics}

A \emph{digital image} $X$ means a finite subset $X \subseteq \Z^n$ of the integral lattice in some $n$-dimensional Euclidean space, together with
a particular  adjacency relation inherited from that of $\Z^n$.  Namely, two (not necessarily distinct) points $x = (x_1, \ldots, x_n) \in \Z^n$ and  $y = (y_1, \ldots, y_n) \in \Z^n$  are adjacent if $|x_i-y_i| \leq 1$ for each $i = 1, \dots, n$.
If $x, y \in X \subseteq \Z^n$, we write $x \sim_X y$ to denote that $x$ and $y$ are adjacent in $X \subseteq \Z^n$.
A \emph{based digital image} is a pair $(X, x_0)$ where $X$ is a digital image and $x_0 \in X$ is some point of $X$, which we refer to as the \emph{basepoint} of $X$.

At the risk of some redundancy, we preserve the word \emph{based} in our nomenclature.  Thus we write based homotopy, based homotopy equivalence, and so-on.  On the other hand, we will usually suppress the basepoint $x_0$ from our notation unless it is useful to emphasize the particular basepoint.  Thus, we will denote a based digital image $(X, x_0)$ simply as $X$, with  the understanding that there is some choice of basepoint $x_0$.

For based digital images  $X \subseteq \Z^n$ and $Y \subseteq \Z^m$, a  function $f\colon X \to Y$  is \emph{continuous} if $f(x) \sim_Y f(y)$ whenever $x \sim_Xy$, and is \emph{based} if $f(x_0) = y_0$.
By a \emph{based map} of based digital images, we mean a continuous, based function.  Occasionally, we may encounter a non-continuous function, or a non-based map, of digital images.  But, mostly, we deal with based maps of based digital images.  The \emph{composition of based maps} $f\colon X \to Y$ and $g\colon Y \to Z$ gives a (continuous) based map $g\circ f\colon X \to Z$, as is easily checked from the definitions.

An \emph{isomorphism} of based digital images is a continuous, based bijection $f \colon X \to Y$ that admits a continuous inverse $g \colon Y \to X$, so that we have $f\circ g = \mathrm{id}_Y$ and $g\circ f = \mathrm{id}_X$ (such a $g$ is necessarily based and bijective).  Here, we say that $X$ and $Y$ are \emph{isomorphic} based digital images, and write $X \cong Y$.

We use the notation $I_N$ or $[0, N]$  for the \emph{digital interval of length} $N$. Namely, $I_N \subseteq \Z$ consists of the integers from $0$ to $N$ (inclusive)  in $\Z$ where consecutive
integers are adjacent. Thus, we have $I_1 = [0, 1] = \{0, 1\}$, $I_2 = [0, 2] = \{0, 1, 2\}$, and so-on.  Occasionally, we may use $I_0$ to denote the singleton point $\{0\} \subseteq \Z$.
We will consistently choose $0 \in I_N$ as the basepoint of an interval.

\begin{example}\label{ex:basic digital images}
 As an example in $\Z^2$, consider what we call  \emph{the Diamond}, $D = \{ (1, 0), (0, 1), (-1, 0), (0, -1) \}$, which may be viewed as a digital circle.  Note that pairs of vertices all of whose coordinates differ by $1$, such as  $(1, 0)$ and  $(0, 1)$ here, are adjacent according to our definition.  Otherwise, $D$ would be disconnected.
\begin{figure}[h!]
\centering
   \begin{subfigure}{0.49\linewidth} \centering
    \includegraphics[trim=160 350 80 80,clip,width=\textwidth]{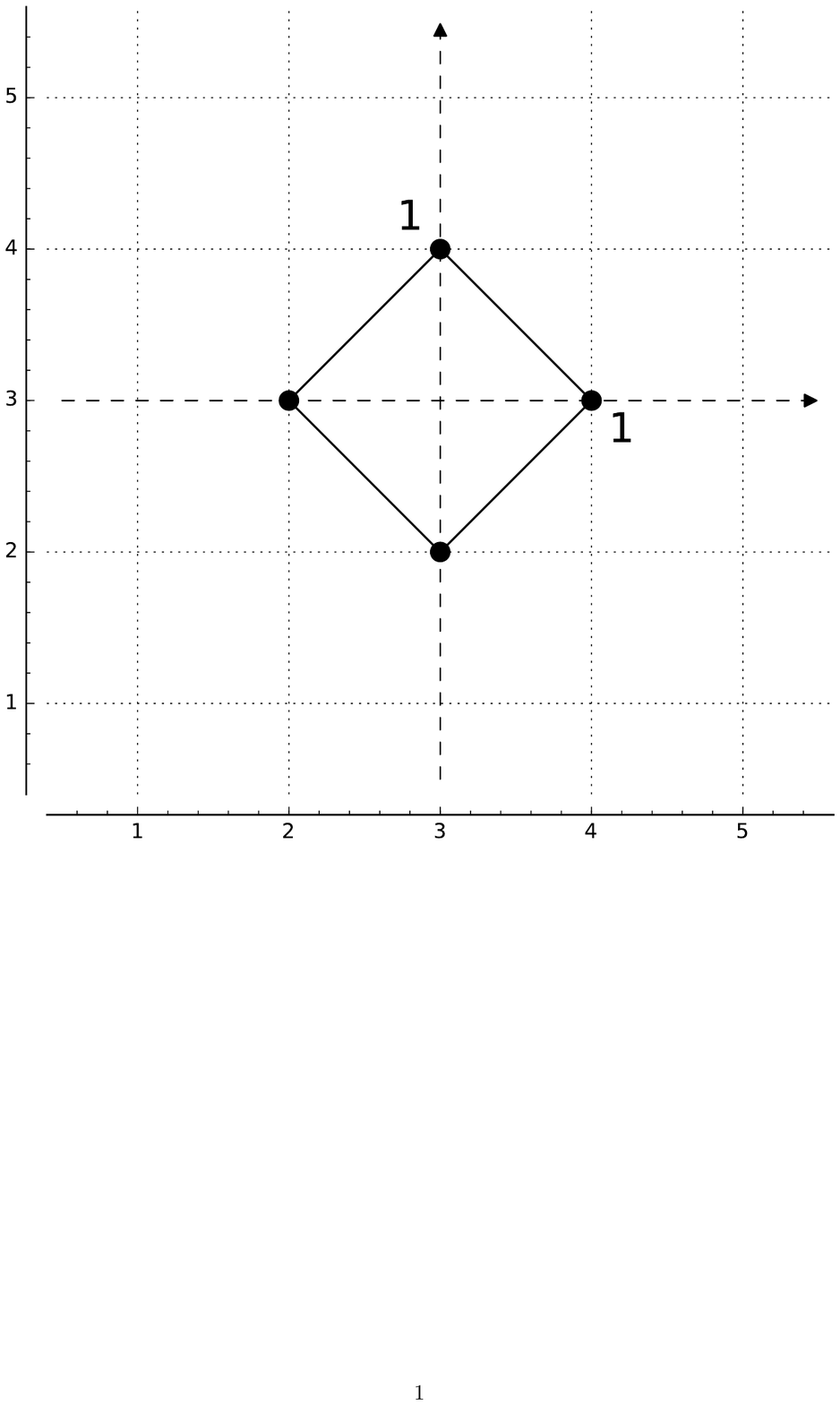}
     \caption{$D$: The Diamond}\label{fig:Circle D}
   \end{subfigure}
   \begin{subfigure}{0.49\linewidth} \centering
    \includegraphics[trim=160 350 80 80,clip,width=\textwidth]{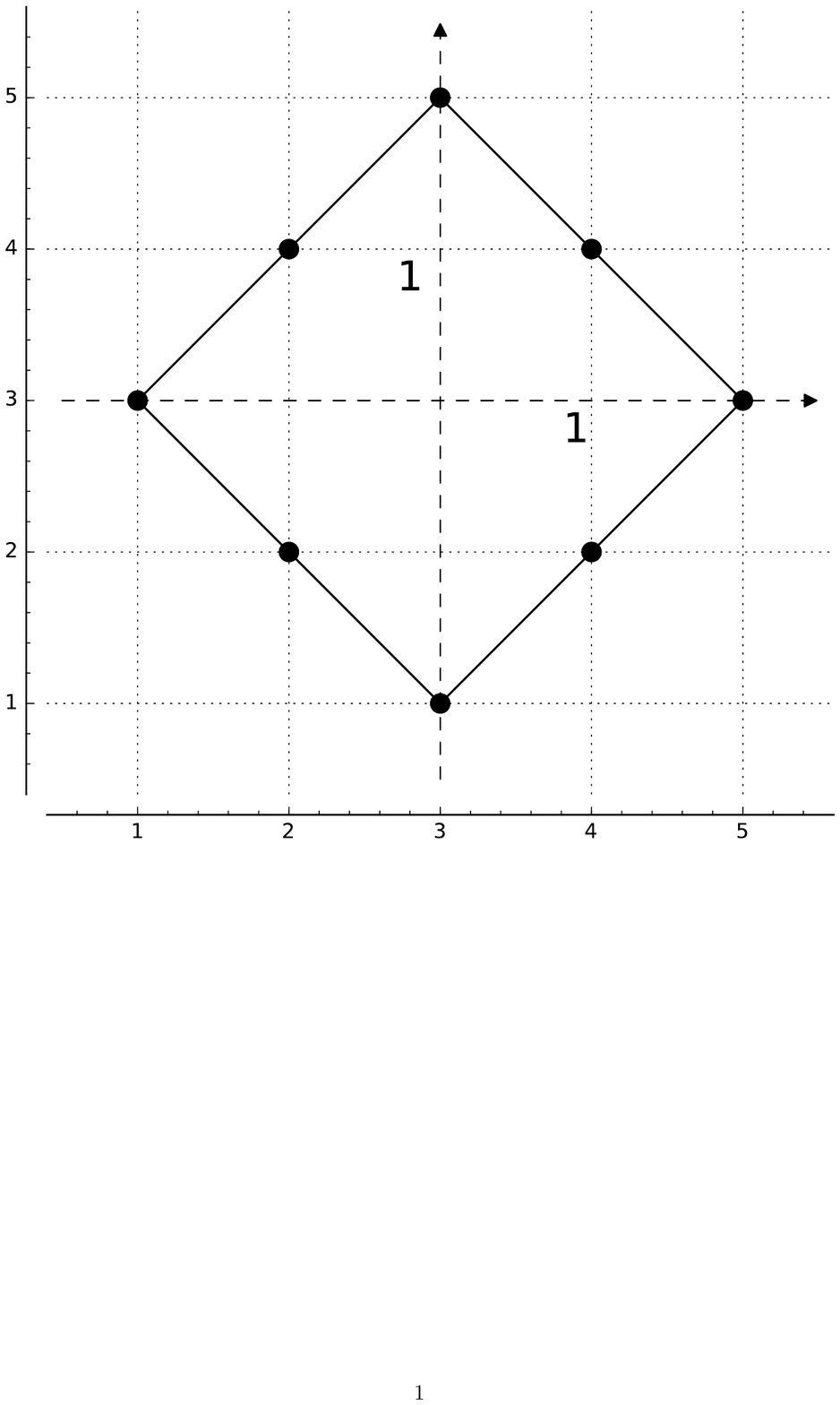}
     \caption{$C$: A larger digital circle}\label{fig:Circle C}
   \end{subfigure}
\caption{Two digital circles.  $D$ and $C$ are not homotopy equivalent, but are subdivision-homotopy equivalent---see \exref{ex: pi1 D and C}} \label{fig:D & C}
\end{figure}
In \figref{fig:D & C} we have included the axes (dashed) and also indicated adjacencies (solid) in the style of a graph.  Note, though, that we have no choice as to which points are adjacent: this is determined by position, or coordinates, and we cannot choose to add or remove edges here.
Also, consider the digital image   $C = \{ (2, 0), (1, 1), (0, 2), (-1, 1), (-2, 0), (-1, -1), (0, -2), (1, -1) \}$ (see \figref{fig:D & C}).  Both $D$ and $C$ may be viewed as  digital circles.  However, the only maps $D \to C$ will be ``homotopically trivial" maps: we cannot ``wrap" a smaller circle around a larger one.  We explain this remark in detail in \exref{ex: pi1 D and C} below.
\end{example}

\begin{definition}\label{def: products}
The product of based digital images $(X, x_0)$ with $X\subseteq \Z^m$ and $(Y, y_0)$ with $Y\subseteq \Z^n$ is $\big(X \times Y, (x_0, y_0)\big)$.  Here,  the Cartesian product  $X \times Y \subseteq \Z^{m} \times \Z^{n} \cong \Z^{m+n}$ has the adjacency relation $(x, y) \sim_{X \times Y} (x', y')$ when $x\sim_X x'$  and $y \sim_Y y'$.
\end{definition}

We use a  ``cylinder object" definition of homotopy.  This is the one commonly used in  the digital topology literature, though with a different notion of adjacency (see Remark \ref{rem: graph product homotopy} below).  In \cite{LOS19a} we give a fuller discussion of homotopy, including a ``path object" definition as well.

\begin{definition}\label{def: Based Homotopy}
Let $f, g\colon X \to Y$ be based maps of based digital images.
We say that $f$ and $g$ are \emph{based homotopic}, and write $f \approx g$,  if, for some $N\geq 1$, there is a (continuous) based map
$$H \colon X \times I_N \to Y,$$
with $H(x, 0) = f(x)$ and $H(x, N) = g(x)$, and $H(x_0, t) = y_0$ for all $t=0, \ldots, N$.  Then $H$ is a \emph{based homotopy} from $f$ to $g$.
\end{definition}

In \defref{def: based h.e.} we give the notion of based homotopy equivalence that flows from this definition of based homotopy.
But the notion of based homotopy equivalence is really too rigid for our purposes.  Instead, we seek to develop a less rigid notion of ``sameness"  for digital images that incorporates \emph{subdivision} (defined in \defref{def: subdivision} below) and seems better suited to homotopy theory in the digital setting.  We call this notion \emph{subdivision-based homotopy equivalence}, and define it in  \defref{def: subdn homotopy equiv}.

\begin{remark}\label{rem: graph product homotopy}
(Based) homotopy of digital maps has been studied by Boxer and others (see, e.g. \cite{Bo99, Bo05}).  Our definition of homotopy above is visually the same as that of these authors.  There is a technical difference, however, in that they take the ``graph product" adjacency relation in the product $X \times I_N$, and not the adjacency relation we use (cf.~remarks after Definition 2.5 of \cite{Bo06}).  The difference is akin to requiring a function of two variables to be separately or jointly continuous.  Therefore, our homotopies must preserve more adjacencies than those of  \cite{Bo99}, and this fact has important consequences.   Note that  various choices of adjacency relation on a product are discussed  in  \cite{Bo18}.
For instance, in \cite{Bo99} (following Th.3.1 there) it is shown that, using the notion of homotopy that derives from the ``graph product,"  the Diamond is contractible.  However, using the notion of homotopy as we have defined it, the contracting homotopy used in \cite{Bo99} fails to be continuous.  In fact, in \cite{LOS19a}, we show that the Diamond $D$ is not contractible.
\end{remark}

The notion of \emph{subdivision of a digital image} plays a fundamental role in our development of ideas in the digital setting.  
Here, we give the minimum amount of information about this sufficient for our purposes.
See \cite{LOS19b} for a full discussion, with illustrative examples, of subdivision of digital images and, especially, subdivision of maps.

\begin{definition}\label{def: subdivision}
Suppose that $X$ is a digital image in $\Z^n$.  For each $k \geq 2$, we have the $k$-\emph{subdivision of $X$}, which is an auxiliary (to $X$) digital image also in $\Z^n$ denoted by $S(X, k)$, with a \emph{canonical map} or \emph{standard projection}
$$\rho_k\colon  S(X, k) \to X$$
that is continuous.  For a real number $x$, denote by $\lfloor x \rfloor$ the greatest integer less-than-or-equal-to $x$.  First, make
the $\Z[1/k]$-lattice in $\R^n$, namely, those points with coordinates each of which is $z/k$ for some integer $z$, and then set
$$S'(X, k) = \left\{ (x_1, \ldots, x_n) \in \left(\Z\left[\frac{1}{k}\right]\right)^n  \mid ( \lfloor x_1 \rfloor, \ldots, \lfloor x_n \rfloor) \in X \right\}.$$
Then set
$$ S(X,k) = \left\{ (kx_1, \ldots, kx_n) \in \Z^n  \mid (x_1, \ldots, x_n) \in S'(X, k) \right\}.$$
The map $\rho_k$ is given by $\rho_k\big( (y_1, \ldots, y_n) \big) = ( \lfloor y_1/k \rfloor, \ldots, \lfloor y_n/k \rfloor)$;  one checks that this map is continuous.

For $x \in X$ an individual point, we write $S(x, k) \subseteq S(X, k)$ for the points $y \in S(X, k)$ that satisfy $\rho_k(y) = x$.  If $x = (x_1, \ldots, x_n)$, then we may describe this set in general as
$$
S(x, k) = \{ (kx_1 + r_1, \ldots, kx_n + r_n) \mid 0 \leq r_i \leq k-1 \}.
$$
\end{definition}

\begin{definition}[Convention on basepoints]\label{def:subdn basepoints}
Suppose $Y\subseteq \Z^n$ is a based digital image with basepoint $y_0 = (y_1, \ldots, y_n)$.  In any subdivision $S(Y, k)$ of $Y$ we may take $\overline{y_0} \in  S(y_0, k)\subseteq S(Y, k)$ as the basepoint, where $\overline{y_0}$ denotes the point whose coordinates are
$$\overline{y_0}= \begin{cases} (2ky_1 + k-1, \ldots, 2ky_n + k-1) & \text{ in } S(Y, 2k)\\ \big( (2k+1)y_1 + k, \ldots, (2k+1)y_n + k\big) & \text{ in } S(Y, 2k+1).\end{cases}$$
In an odd subdivision, $\overline{y_0}$ is the centre point of $S(y_0, 2k+1)$, which is a cubical  lattice in $\Z^n$, each side of which contains $2k+1$ points.  In an even subdivision, $S(y_0, 2k)$ does not have a centre point, as such, since there is no middle point in an interval which contains $2k$ points.  Rather, $\overline{y_0}$ is a corner of the \emph{central clique} of  $S(y_0, 2k)$, which is to say a unit cubical  lattice in $\Z^n$ that may be considered as the center of $S(y_0, 2k)$.  Usually, and unless there is some reason not to do so, we will choose  $\overline{y_0}$ as the basepoint of a subdivision $S(Y, k)$.   One exception to this is  our convention that $0 \in I_N$ is the basepoint of an interval. This means that when we  subdivide an interval, the basepoint of $S(I_N, k) = I_{Nk+k-1}$ will be $0$ rather than $\overline{0} = k$.   However we choose the basepoint in a subdivision, so long as we have the basepoint of $S(Y, k)$ to be some point in $S(y_0, k) \subseteq S(Y, k)$, then the standard projection $\rho_k \colon S(Y, k) \to Y$ is a based map.  In particular, note  that we have $\rho_k(\overline{y_0}) = y_0$.   Furthermore, with our convention that $\overline{y_0}$ is the basepoint of $S(Y, k)$, the partial projections $\rho^c_k\colon S(Y, k) \to S(Y, k-1)$ for each $k \geq 3$ of \defref{def: rho^c} are also based maps.  Indeed, from the formulas of \defref{def: rho^c}, for
$\rho^c_{2k+1}\colon S(Y, 2k+1) \to S(Y, 2k)$, with $k\geq 1$,  we have
$$\begin{aligned}
\rho^c_{2k+1}(\overline{y_0}) &= \rho^c_{2k+1}\big( (2k+1)y_1 + k, \ldots, (2k+1)y_n + k \big)  \\
&= (2ky_1 + k-1, \ldots, 2ky_n + k-1) = \overline{y_0} \in S(Y, 2k).\end{aligned}$$
On the other hand, for
$\rho^c_{2k}\colon S(Y, 2k) \to S(Y, 2k-1)$, with $k\geq 2$,  we have
$$\begin{aligned}
\rho^c_{2k}(\overline{y_0}) &= \rho^c_{2k}( 2ky_1 + k-1, \ldots, 2ky_n + k -1\big)  \\
&= \big((2k-1)y_1 + k-1, \ldots, (2k-1)y_n + k-1) = \overline{y_0} \in S(Y, 2k-1),\end{aligned}$$
with this last identification following from the above definition of $\overline{y_0} \in S(Y, 2k-1)$ because we have $2k-1 = 2(k-1)+1$.   In all cases, that is, we have
$\rho^c_k(\overline{y_0}) = \overline{y_0}$, and  $\rho^c_k$ is a based map.
\end{definition}

Generally, subdivision of an interval $I_N \subseteq \Z$ gives a longer interval: $S(I_N, k) = I_{Nk+k-1} \subseteq \Z$.
We also note here that $S(I_0, k) = S(\{0\}, k) = I_{k-1}$.
We adopt the notational convention that $S(X, 1) = X$, and $\rho_1\colon  S(X, 1) \to X$ is just the identity map of $X$.

The less rigid notion of homotopy that incorporates subdivision, indicated above, is as follows.   Note that we may iterate subdivision.  Here, and in the sequel, we make identifications such as $S\big( S(X, k), k'\big) \cong S(X, kk')$.

\begin{definition}\label{def: subdn homotopic maps}
Suppose $X\subseteq \Z^m$ and $Y \subseteq \Z^n$ are based digital images.  Assume a choice of basepoint  in each subdivision $S(X, k)$  (such as $\overline{x_0}$, or $0$ in case $X$ is an interval).
Two  based maps $f \colon S(X, k) \to Y$ and $g \colon S(X, l) \to Y$ are \emph{subdivision-based homotopic} if, for some $k', l'$ with $kk' = ll' = m$, we have a based homotopy
$$f\circ \rho_{k'} \approx g\circ \rho_{l'} \colon S(X, m)  \to Y.$$
In particular,  maps $f, g\colon X \to Y$ are subdivision-based homotopic if we have a based homotopy $f\circ\rho_k \approx g\circ\rho_k\colon S(X, k) \to Y$, for some $\rho_k\colon S(X, k) \to X$.
\end{definition}

\section{The Fundamental Group}\label{sec: pi-one}

In \cite{Bo99} (see also \cite{BS16, B-S18}),     Boxer has  defined a version of the fundamental group for a digital image.  We give a  complete development of this topic here, because our development, whilst superficially similar, differs from that of \cite{Bo99} in several basic details.  We will point out these differences as we go along.  In fact, we end up with a different invariant: there are basic examples of 2D digital images for which our fundamental group is not isomorphic to that of \cite{Bo99}.  In earlier work \cite{Kong89} (cf.~ \cite{K-R-R92}), Kong has also defined a fundamental group, but that version applies to restricted types of digital image and furthermore the general approach taken there differs somewhat from our approach and that of \cite{Bo99}.  We mention also that a brief treatment of a fundamental group for digital images is given in \cite{Kha87}, but there the approach taken seems quite  different again.

Let $(Y, y_0)$ be a based digital image with $Y \subseteq \Z^n$.  For any  $N \geq 1$, a \emph{based path of length $N$ in $Y$} is a continuous map $\alpha\colon I_N \to Y$ with $\alpha(0) = y_0$.  Unlike in the ordinary (topological) homotopy setting, where any path may be taken with the fixed domain $[0, 1]$, in the digital setting we must allow paths to have different domains.

\begin{definition}\label{def: based loop}
Let $(Y, y_0)$ be a based digital image with $Y \subseteq \Z^n$.
A \emph{based loop of length $N$} in $Y$ is a path $\gamma\colon I_N \to Y$ that satisfies $\gamma(0) = \gamma(N) = y_0$.  If $\gamma\colon I_N \to Y$ is a based loop in $Y$, then for any $k$,
$$\gamma\circ \rho_k\colon S(I_N, k) \to I_N \to Y$$
is also a based loop (of length $kN + k-1$), in that we have $S(I_N, k)  = I_{kN + k-1}$ and $\gamma\circ \rho_k(0) = \gamma(0) = y_0$ and $\gamma\circ \rho_k(kN+k-1) = \gamma(N) = y_0$.
\end{definition}

Geometrically speaking, the composition $\gamma\circ \rho_k$ amounts to a reparametrization of the loop $\gamma$.  The image traced out in $Y$  is the same, but we pause at each point of the loop for an interval of length $k-1$.  This device allows us to compare loops of different lengths, and also gives much greater flexibility in deforming loops by (based) homotopies.

We specialize \defref{def: Based Homotopy} to the context of based loops as follows.

\begin{definition}\label{def: Based Homotopy of loops}
Given a based digital image $Y \subseteq \Z^n$, we say that based loops $\alpha, \beta \colon I_M \to Y$ (of the same length) are \emph{based homotopic as based loops} if there is a based homotopy $H\colon I_M \times I_N \to Y$ with $H(0, t) = H(M, t) = y_0$ for all $t \in I_N$. We refer to such a homotopy as a based homotopy of based loops.
\end{definition}

For our fundamental group, we use a slightly different way of forming equivalence classes of loops from that used in \cite{Bo99}.  Whereas \cite{Bo99} uses the notion of ``trivial extensions'' of a path, we use a path pre-composed with a standard projection $\rho\colon S(I_N, k) \to I_N$, which is a particular type of---a sort of ``regular," or evenly distributed---trivial extension.

We specialize \defref{def: subdn homotopic maps} to the context of based loops as follows.

\begin{definition}\label{def: subdn homotopic loops}
Two based loops $\alpha \colon I_M \to Y$ and $\beta \colon I_N \to Y$ (generally of different lengths) are \emph{subdivision-based  homotopic as based loops} if, for some $k, l$ with $k, l \geq 1$ and $k(M+1) = l(N+1)$, we have
$$\alpha\circ \rho_k\colon S(I_{M}, k) \to I_M \to Y \quad \text{and} \quad \beta\circ \rho_l\colon S(I_{N}, l) \to I_N \to Y$$
based-homotopic as maps $S(I_{M}, k) = I_{kM + k-1} = I_{lN + l-1} =  S(I_{N}, l) \to Y$, via a based homotopy of based loops; i.e.,  if we have a homotopy $H\colon I_{kM + k-1} \times I_R \to Y$ that satisfies $H(s, 0) = \alpha\circ \rho_k(s)$ and  $H(s, R) = \beta\circ \rho_l(s)$, and also $H(0, t) = H(kM + k-1, t) = y_0$ for all $t \in I_R$.
\end{definition}

\begin{lemma}\label{lem: subdn homotopy is equiv}
Let $Y\subseteq \Z^n$ be a based digital image.  Subdivision-based homotopy of based loops is an equivalence relation on the set of all based loops (of all lengths)  in $Y$.
\end{lemma}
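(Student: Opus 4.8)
The plan is to verify the three defining properties of an equivalence relation on the set of based loops of all lengths in $Y$; reflexivity and symmetry are immediate, and transitivity carries essentially all of the content. For \emph{reflexivity}, given a based loop $\alpha\colon I_M \to Y$ I would take $k = l = 1$ (so the constraint $k(M+1) = l(M+1)$ is trivial) and use the constant homotopy $I_M \times I_1 \to Y$, $(s,t)\mapsto\alpha(s)$, which is visibly a based homotopy of based loops from $\alpha\circ\rho_1 = \alpha$ to itself. For \emph{symmetry}, if $H\colon I_{kM+k-1}\times I_R \to Y$ is a based homotopy of based loops witnessing that $\alpha\colon I_M \to Y$ and $\beta\colon I_N \to Y$ are subdivision-based homotopic (with $k(M+1) = l(N+1)$), then $H'(s,t) := H(s, R-t)$ is a based homotopy of based loops from $\beta\circ\rho_l$ to $\alpha\circ\rho_k$, so the relation is symmetric with the pair $(k,l)$ simply interchanged.

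The real work is \emph{transitivity}. Suppose $\alpha\colon I_M \to Y$ is subdivision-based homotopic to $\beta\colon I_N \to Y$ via parameters $k_1, l_1$ and a based homotopy of based loops $H_1$ from $\alpha\circ\rho_{k_1}$ to $\beta\circ\rho_{l_1}$, and that $\beta$ is subdivision-based homotopic to $\gamma\colon I_P \to Y$ via $k_2, l_2$ and a based homotopy of based loops $H_2$ from $\beta\circ\rho_{k_2}$ to $\gamma\circ\rho_{l_2}$. The obstacle is that $H_1$ lives over the $l_1$-subdivision of $I_N$ while $H_2$ lives over the $k_2$-subdivision, so I must transport both onto a common refinement, namely the $(l_1 k_2)$-subdivision. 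Two elementary facts make this possible: (i) under the standard identification $S\big(S(I_L,a),b\big)\cong S(I_L,ab)$ one has $\rho_a\circ\rho_b = \rho_{ab}$, which reduces via the defining floor formula for $\rho$ to the identity $\lfloor\lfloor x/a\rfloor/b\rfloor = \lfloor x/(ab)\rfloor$ applied coordinatewise; and (ii) if $G\colon I_L \times I_R \to Y$ is a based homotopy of based loops and $a\geq 1$, then $G\circ(\rho_a\times\mathrm{id}_{I_R})\colon S(I_L,a)\times I_R \to Y$ is again a based homotopy of based loops, since $\rho_a\times\mathrm{id}$ is continuous for the product adjacency of \defref{def: products} (a product of continuous maps), and the basepoint and endpoint conditions are preserved because $\rho_a$ fixes $0$ and carries the last point of $S(I_L,a)$ to the last point of $I_L$.

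Applying (ii), I would precompose $H_1$ with $\rho_{k_2}\times\mathrm{id}$ and $H_2$ with $\rho_{l_1}\times\mathrm{id}$; by (i) these become based homotopies of based loops $H_1'$ from $\alpha\circ\rho_{k_1k_2}$ to $\beta\circ\rho_{l_1k_2}$, and $H_2'$ from $\beta\circ\rho_{k_2l_1}$ to $\gamma\circ\rho_{l_2l_1}$. Since $l_1k_2 = k_2l_1$ the middle terms coincide, and a one-line length count using $k_1(M+1) = l_1(N+1)$ and $k_2(N+1) = l_2(P+1)$ shows that $H_1'$ and $H_2'$ share the common spatial domain $I_{k_1k_2(M+1)-1}$. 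I would then stack them in the homotopy parameter: define $H$ on $I_{k_1k_2(M+1)-1}\times I_{R_1+R_2}$ to be $H_1'$ on $[0,R_1]$ and a shifted copy of $H_2'$ on $[R_1, R_1+R_2]$; this is well defined at $t = R_1$ because both restrictions equal $\beta\circ\rho_{l_1k_2}$ there, and continuous across that level because continuity of $H_2'$ forces $H_2'(s,0)\sim H_2'(s',1)$ whenever $(s,0)\sim(s',1)$, and symmetrically from the $H_1'$ side. The resulting $H$ is a based homotopy of based loops from $\alpha\circ\rho_{k_1k_2}$ to $\gamma\circ\rho_{l_1l_2}$, and since $k_1k_2(M+1) = l_1l_2(P+1)$, it exhibits $\alpha$ and $\gamma$ as subdivision-based homotopic as based loops with $k = k_1k_2\geq 1$ and $l = l_1l_2\geq 1$. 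The only genuinely delicate point in the whole argument is the bookkeeping in this last step — arranging the two given homotopies over a common subdivision and checking continuity at the gluing level — while everything else is formal.
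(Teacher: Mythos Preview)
Your proof is correct and follows essentially the same approach as the paper's (given in \propref{prop: subdn-bsd homotopy is equiv of loops} of the appendix): reflexivity and symmetry are dismissed as tautological, and transitivity is handled by refining both given homotopies to a common subdivision via precomposition with $\rho_a\times\mathrm{id}$ (your fact (ii), the paper's \lemref{lem: composition based homotopy loops}) and then stacking them in the time coordinate with a continuity check at the seam. The only cosmetic difference is notation; your explicit mention of the floor identity behind $\rho_a\circ\rho_b=\rho_{ab}$ and your continuity argument at the gluing level are exactly the ingredients the paper invokes by reference.
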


\begin{proof}
This is proved in \propref{prop: subdn-bsd homotopy is equiv of loops}.
\end{proof}

Denote by $[\alpha]$ the (subdivision-based homotopy) equivalence class of based loops represented by a based loop $\alpha\colon I_N \to Y$.  Thus, we have $[\alpha] = [\alpha\circ \rho_k]$ for any standard projection $\rho_k\colon S(I_N, k) \to I_N$. More generally, we write $[\alpha] = [\beta]$ whenever $\alpha$ and $\beta$ are subdivision-based homotopic as based loops in $Y$.

From here, the development of the fundamental group follows exactly that of the topological setting (see \cite[Chap.II]{Mas91}, for example).    This plan is used in \cite{Bo99}; we  follow the same plan here.  However, we must adapt the details in a number of ways from those of \cite{Bo99}  because our basic ingredients differ somewhat: we have adopted a fixed adjacency relation on our digital images; we have defined  homotopy in a way that differs from that of \cite{Bo99} (cf.~\remref{rem: graph product homotopy}); we use subdivision-based homotopy for our equivalence relation on based loops rather than the trivial extensions of \cite{Bo99}; we concatenate loops in a slightly different way from \cite{Bo99} (see the next item).

At several points in the development, we will work in the context of paths, and not just loops.  For this reason, we give a general definition of concatentation that, in particular, may be applied to based loops.

\begin{definition}\label{def: concatenation}
Suppose $\alpha\colon I_M \to Y$ and
$\beta\colon I_N \to Y$ are paths in $Y$ that satisfy $\alpha(M) \sim_Y \beta(0)$.  Their \emph{concatenation} is  the path $\alpha\cdot\beta\colon I_{M+N+1} \to Y$ of length $M+N+1$ in  $Y$ defined by
\begin{equation}\label{eq: concat}
\alpha\cdot\beta(t) = \begin{cases} \alpha(t) & 0 \leq t \leq M\\  \beta(t-(M+1)) & M+1 \leq t \leq M+N+1.\end{cases}
\end{equation}
If $\alpha(M) = \beta(0)$, then our definition means that we pause for a unit interval when attaching the end of $\alpha$ to the start of $\beta$. In this case, we may also define the shorter version, which is commonly used in the literature but which we generally do not use except at one point in the development.  In contrast to our concatenation, and only if $\alpha(M) = \beta(0)$, we define their \emph{short concatenation} as the path  of length $M+N$ in  $Y$ given by
\begin{equation}\label{eq: short concat}
\begin{aligned}
\alpha*\beta(t) &= \begin{cases} \alpha(t) & 0 \leq t \leq M-1\\  \beta(t-M) & M \leq t \leq M+N \end{cases}\\
&= \begin{cases} \alpha(t) & 0 \leq t \leq M\\  \beta(t-(M+1)) & M+1 \leq t \leq M+N.\end{cases}
\end{aligned}
\end{equation}
\end{definition}

So given two based loops $\alpha\colon I_M \to Y$ and
$\beta\colon I_N \to Y$, we  form their \emph{product} by concatenation:
$$\alpha\cdot\beta\colon I_{M+N+1} \to Y$$
is the based loop of length $M + N +1$ defined by \eqref{eq: concat}.
We pause at  the basepoint for a unit interval when attaching the end of $\alpha$ to the start of $\beta$.  Concatenating in this way is crucial for the compatibility of subdivision homotopy and the product (part (b) of \lemref{lem: loop product basics} below).   This product of based loops is strictly associative, as is easily checked.

In the following result, and in the sequel, write $C_N \colon I_N \to Y$ for the constant loop defined by $C_N(i) = y_0$ for $0 \leq i \leq N$.  Just as in the topological setting, (the class represented by) this loop will play the role of the identity element.  Also, recall our notational convention that if $k=1$,  then $\rho_k\colon  S(X, k) \to X$ just means the identity map of $X$.

\begin{lemma}\label{lem: loop product basics}
Let $Y \subseteq \Z^n$ be any based digital image.
\begin{itemize}
\item[(a)]  Suppose we have pairs of based-homotopic loops
$\alpha \approx \alpha'\colon I_M \to Y$ and $\beta \approx \beta'\colon I_N \to Y$.
Then  we have  a based homotopy of based loops
$\alpha\cdot\beta \approx \alpha'\cdot\beta' \colon I_{M+N+1} \to Y$.
\item[(b)]  For any based loops
$\alpha \colon I_M \to Y$ and $\beta\colon I_N \to Y$ and any $k$, we have equality of based loops
$$(\alpha\cdot\beta)\circ\rho_k = (\alpha\circ\rho_k)\cdot (\beta\circ\rho_k)\colon I_{k(M+N+1) + k-1} \to Y.$$
\item[(c)]  Given a based loop $\alpha \colon I_M \to Y$ and $k \geq 2$, we have based homotopies of based loops
$$\alpha\circ \rho_k \approx (\alpha\circ\rho_{k-1})\cdot C_M\colon I_{kM + k-1} \to Y$$
and $\alpha\circ \rho_k \approx C_M\cdot (\alpha\circ\rho_{k-1})\colon I_{kM + k-1} \to Y$.
\end{itemize}
\end{lemma}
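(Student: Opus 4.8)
The plan is to treat the three parts separately, as they are largely independent.

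For (a), I would first note that a based homotopy of based loops $H\colon I_M\times I_{L}\to Y$ may always be lengthened in the homotopy variable: appending the constant slice equal to its terminal loop produces a based homotopy $I_M\times I_{L+c}\to Y$, with continuity at the seam following from continuity of the terminal loop. So I may assume the given homotopies — $H$ from $\alpha$ to $\alpha'$ and $K$ from $\beta$ to $\beta'$ — share a common height $L$. I then set $G\colon I_{M+N+1}\times I_L\to Y$ by $G(s,t)=H(s,t)$ for $0\le s\le M$ and $G(s,t)=K\big(s-(M+1),t\big)$ for $M+1\le s\le M+N+1$, i.e.\ concatenate the two homotopies slice-by-slice in the loop variable. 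Continuity is immediate away from $s\in\{M,M+1\}$, and at that seam $G(M,t)=y_0=G(M+1,t)$, since $H$ and $K$ fix the \emph{endpoints} of the loops and not just the basepoint $0\in I_M$; so every adjacency across the seam holds. Comparing boundary conditions with \eqref{eq: concat} shows $G$ is a based homotopy of based loops from $\alpha\cdot\beta$ to $\alpha'\cdot\beta'$. Part (b) is a direct computation: both sides have domain $S(I_{M+N+1},k)=I_{k(M+N+1)+k-1}$ (for the right-hand side because $(Mk+k-1)+(Nk+k-1)+1=k(M+N+1)+k-1$), and evaluating at $s$ — using $\rho_k(s)=\lfloor s/k\rfloor$, the formula \eqref{eq: concat}, the fact that $\lfloor s/k\rfloor\le M$ iff $s\le Mk+k-1$, and the identity $\lfloor(s-k(M+1))/k\rfloor=\lfloor s/k\rfloor-(M+1)$ (valid since $k(M+1)$ is a multiple of $k$) — both sides come out to $\alpha(\lfloor s/k\rfloor)$ for $s\le Mk+k-1$ and $\beta(\lfloor s/k\rfloor-(M+1))$ for $s\ge Mk+k$.

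For (c) I would isolate a reparametrization principle. Say a based loop is a \emph{repeat-reparametrization} of a based loop $\gamma\colon I_L\to Y$, with multiplicity vector $(m_0,\dots,m_L)$ of positive integers, if it is obtained by replacing each value $\gamma(i)$ with $m_i$ consecutive copies of itself; its domain is $I_{(m_0+\cdots+m_L)-1}$. The principle is: any two repeat-reparametrizations of $\gamma$ with the same multiplicity sum are based homotopic as based loops. Granting this, (c) is immediate, since $\alpha\circ\rho_k$, $(\alpha\circ\rho_{k-1})\cdot C_M$, and $C_M\cdot(\alpha\circ\rho_{k-1})$ are precisely the repeat-reparametrizations of the based loop $\alpha$ with multiplicity vectors $(k,\dots,k)$, $(k-1,\dots,k-1,k+M)$, and $(k+M,k-1,\dots,k-1)$ respectively — each of length $M+1$, each with positive entries (as $k\ge2$), and each of sum $k(M+1)$ — so all three have domain $I_{kM+k-1}$ and are pairwise based homotopic as based loops.

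To prove the principle I would use that based homotopy of based loops is an equivalence relation (which I would record, e.g.\ alongside \lemref{lem: subdn homotopy is equiv}) to reduce to a single \emph{elementary move}: passing from $(\dots,m_j,m_{j+1},\dots)$ with $m_j\ge2$ and $j<L$ to $(\dots,m_j-1,m_{j+1}+1,\dots)$. Any two multiplicity vectors with positive entries and equal sum $R$ are joined by a finite sequence of such moves and their inverses (for instance, each can be brought to $(1,\dots,1,R-L)$ by shifting surplus rightward, with $R-L\ge1$ since all $m_i\ge1$). A single elementary move alters the reparametrized loop at exactly one parameter $P$ — the last position of the $j$-th block — where the value changes from $\gamma(j)$ to $\gamma(j+1)$, and these are adjacent because $\gamma$ is continuous; so the one-step homotopy with $H(\cdot,0)$ the old loop and $H(\cdot,1)$ the new loop is continuous (every horizontal, vertical, and diagonal adjacency near $P$ reduces to $\gamma(j)\sim\gamma(j)$, $\gamma(j+1)\sim\gamma(j+1)$, or $\gamma(j)\sim\gamma(j+1)$) and based ($P$ is neither $0$ nor the terminal parameter). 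I expect this reparametrization principle to be the main obstacle — in particular, the bookkeeping that an elementary move changes the reparametrized loop in exactly one place, and that its one-step homotopy is continuous across the diagonal as well as horizontally and vertically; parts (a) and (b) are routine once the right homotopy is written down.
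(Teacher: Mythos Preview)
Your arguments for (a) and (b) match the paper's essentially verbatim: lengthen one homotopy to equalize heights and concatenate in the loop variable for (a); a direct floor-function computation for (b).

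For (c) your route is genuinely different. The paper writes down a single explicit homotopy $H\colon I_{kM+k-1}\times I_{M+1}\to Y$ that, at time $t$, has already replaced $\rho_k$ by $\rho_{k-1}$ on the first $t$ values of $\alpha$ and made up the deficit with $t$ copies of $y_0$ at the end; it then spends roughly two pages verifying continuity of this piecewise formula, with a separate treatment of $k=2$ and a delicate case analysis comparing $\rho_{k-1}(s)$ with $\rho_k(s'+t')$ across the region boundaries. Your reparametrization principle replaces this by a chain of one-step homotopies, each of which changes the loop at a single parameter value from $\gamma(j)$ to $\gamma(j+1)$; the continuity check for each such step is immediate, and the bookkeeping you flag (that an elementary move alters exactly one position, that this position is neither endpoint, and that diagonal adjacencies reduce to $\gamma(j)\sim\gamma(j+1)$) all goes through as you describe. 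What your approach buys is modularity and a reusable lemma (it would, for instance, absorb both halves of (c) and the path variants invoked later in the proof of \thmref{thm: indep basept}); what the paper's approach buys is a short homotopy of length $M+1$ rather than the $O(M^2)$ elementary moves your reduction to $(1,\dots,1,R-L)$ implicitly requires, though this cost is irrelevant for the lemma as stated. Note only that your appeal to transitivity of based homotopy of based loops is not recorded as a standalone lemma in the paper, but it is used freely there (e.g.\ in the proof of \corref{cor: homotopy alpha-rho cover}), so this is not a gap.
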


\begin{proof}
(a) Suppose we have based homotopies of based loops $H\colon I_M \times I_R \to Y$ and $G\colon I_N \times I_T \to Y$ from $\alpha$ to $\alpha'$ and from $\beta$ to $\beta'$ respectively.  We first, if necessary, adjust one of the intervals $I_R, I_T$  so that both homotopies are of the same length.  Suppose we have $R < T$ (the case in which  $R > T$ is handled similarly, and we omit it).  Then lengthen $H$ into a based homotopy $H' \colon I_M \times I_T \to Y$ defined as
$$H' (s, t) = \begin{cases} H(s, t) & 0 \leq t \leq R \\ H(s, R) & R+1 \leq t \leq T.\end{cases}$$
Allowing this to be continuous on  $I_M \times I_T$, it is clearly a based homotopy of based loops from $\alpha$ to $\alpha'$.  We just need to be a little careful to check continuity, bearing in mind our adjacencies on the product.  To this end, say we have $(s, t) \sim_{I_M \times I_T} (s', t')$.  Since $t\sim_{I_T} t'$, we must have either $\{t, t'\} \subseteq [0, R]$ or $\{t, t'\} \subseteq [R, T]$.  If $\{ (s, t),  (s', t')\} \subseteq I_M \times I_R$, then continuity of $H$ gives
$H'(s, t) \sim_{Y} H'(s', t')$.  If  $\{ (s, t),  (s', t')\} \subseteq I_M \times [R, T]$, then we have $H'(s, t) = H(s, R) \sim_{Y} H(s', R) = H'(s', t')$. It follows that $H'$ is continuous.    Now define a homotopy (with $H' = H$ in case the original $R$ and $T$ are equal)  $H'+G\colon I_{M+N+1} \times I_T \to Y$ as
$$(H' +G)(s, t) = \begin{cases} H'(s, t) & 0 \leq s \leq M \\ G(s-(M+1), t) & M+1 \leq s \leq M+N+1.\end{cases}$$
Once again, if continuous on  $I_{M+N+1} \times I_T$, this is clearly a based homotopy from $\alpha\cdot\beta$ to $\alpha'\cdot\beta'$.  To check the two homotopies assemble together continuously, we observe  that, if $(s, t) \sim_{I_{M+N+1} \times I_T} (s', t')$, then either $\{s, s'\} \subseteq [0, M+1]$ or $\{s, s'\} \subseteq [M+1, M+N]$.  Then proceeding as in the first part, and using $H(M, t) = G(0, t) = y_0$, we confirm the continuity of  $(H' +G)$.

(b) We have $\alpha\circ\rho_k \colon I_{kM + k-1} \to Y$ and $\beta\circ\rho_k \colon I_{kN + k-1} \to Y$.  Thus,
$$(\alpha\circ\rho_k)\cdot (\beta\circ\rho_k)\colon I_{(kM + k-1) + (kN + k-1) + 1} \to Y$$
is given by

\bigskip

$(\alpha\circ\rho_k)\cdot (\beta\circ\rho_k)(t) =$
$$\begin{cases} \alpha\circ\rho_k(t) & 0 \leq t \leq kM + k-1\\
\beta\circ\rho_k(t-(kM+k)) & kM+k \leq t \leq (kM + k-1) + (kN + k-1) + 1.\end{cases}$$
Note that we have $(kM + k-1) + (kN + k-1) + 1 = k(M+N+1) + (k-1)$ and that, for  $kM+k \leq t \leq k(M+N+1) + (k-1)$, we have
$\beta\circ\rho_k(t-(kM+k)) = \beta\big( \rho_k(t) - (M+1)\big)$.  Thus this product agrees with the composition  $(\alpha\cdot\beta)\circ\rho_k$.

(c)  For the first assertion, we claim that a based homotopy of based loops $H\colon I_{kM + k-1} \times I_{M+1}\to Y$ from $\alpha\circ \rho_k$ to $(\alpha\circ\rho_{k-1})\cdot C_M$
is given by
$$H(s, t) = \begin{cases} \alpha\circ\rho_{k-1}(s)& 0 \leq s \leq t(k-1) - 1\\
\alpha\circ\rho_{k}(s+t)& t(k-1) \leq s \leq kM + k-1 - t\\
y_0 & kM + k - t \leq s \leq kM + k -1.
\end{cases}
$$
The idea of the deformation is that, for each fixed $t$ as we progress through $I_{M+1}$, we pause for one fewer amount of time at the first $t$ values of the path $\alpha$, and then make up the length of the path at the endpoint by waiting there for $t$ points.  The homotopy  is illustrated in \figref{fig:homotopy alpha c}.
\begin{figure}[h!]
\centering
$$
\begin{tikzpicture}[scale=.5]
\draw[step=1cm,lightgray,very thin] (-2.2,-1.2) grid (20.2,8.2);

\draw[thick, black]  (-2.2,0)--(20.2,0);
\draw[thick, black]  (0,8.2)--(0,-1.2);

\node[inner sep=2pt, circle](25) at (0,0)[draw] {};
\node[inner sep=2pt, circle](25) at (0,1) [draw]{};
\node[inner sep=2pt, circle](25) at (0,2)[draw] {};
\node[inner sep=2pt, circle](25) at (0,3)[draw] {};
\node[inner sep=2pt, circle](25) at (0,4)[draw] {};
\node[inner sep=2pt, circle](25) at (0,5)[draw] {};

\node[inner sep=2pt, circle](25) at (1,0)[draw] {};
\node[inner sep=2pt, circle](25) at (1,1) [draw]{};
\node[inner sep=2pt, circle](25) at (1,2) [draw]{};
\node[inner sep=2pt, circle](25) at (1,3) [draw]{};
\node[inner sep=2pt, circle](25) at (1,4) [draw]{};
\node[inner sep=2pt, circle](25) at (1,5)[draw] {};

\node[inner sep=2pt, circle](25) at (2,0)[draw] {};
\node[inner sep=2pt, circle](25) at (2,1)[draw] {};
\node[inner sep=2pt, circle](25) at (2,2) [draw]{};
\node[inner sep=2pt, circle](25) at (2,3) [draw]{};
\node[inner sep=2pt, circle](25) at (2,4) [draw]{};
\node[inner sep=2pt, circle](25) at (2,5) [draw]{};

\node[inner sep=2pt, circle](25) at (3,0) [draw]{};
\node[inner sep=2pt, circle](25) at (3,1) [draw]{};
\node[inner sep=2pt, circle](25) at (3,2) [draw]{};
\node[inner sep=2pt, circle](25) at (3,3) [draw]{};
\node[inner sep=2pt, circle](25) at (3,4) [draw]{};
\node[inner sep=2pt, circle](25) at (3,5) [draw]{};

\node[inner sep=2pt, circle](25) at (4,0) [draw]{};
\node[inner sep=2pt, circle](25) at (4,1) [draw]{};
\node[inner sep=2pt, circle](25) at (4,2) [draw]{};
\node[inner sep=2pt, circle](25) at (4,3) [draw]{};
\node[inner sep=2pt, circle](25) at (4,4) [draw]{};
\node[inner sep=2pt, circle](25) at (4,5) [draw]{};

\node[inner sep=2pt, circle](25) at (5,0) [draw]{};
\node[inner sep=2pt, circle](25) at (5,1) [draw]{};
\node[inner sep=2pt, circle](25) at (5,2) [draw]{};
\node[inner sep=2pt, circle](25) at (5,3) [draw]{};
\node[inner sep=2pt, circle](25) at (5,4) [draw]{};
\node[inner sep=2pt, circle](25) at (5,5) [draw]{};

\node[inner sep=2pt, circle](25) at (6,0) [draw]{};
\node[inner sep=2pt, circle](25) at (6,1)[draw] {};
\node[inner sep=2pt, circle](25) at (6,2) [draw]{};
\node[inner sep=2pt, circle](25) at (6,3) [draw]{};
\node[inner sep=2pt, circle](25) at (6,4) [draw]{};
\node[inner sep=2pt, circle](25) at (6,5) [draw]{};

\node[inner sep=2pt, circle](25) at (7,0) [draw]{};
\node[inner sep=2pt, circle](25) at (7,1) [draw]{};
\node[inner sep=2pt, circle](25) at (7,2) [draw]{};
\node[inner sep=2pt, circle](25) at (7,3) [draw]{};
\node[inner sep=2pt, circle](25) at (7,4) [draw]{};
\node[inner sep=2pt, circle](25) at (7,5) [draw]{};

\node[inner sep=2pt, circle](25) at (8,0) [draw]{};
\node[inner sep=2pt, circle](25) at (8,1) [draw]{};
\node[inner sep=2pt, circle](25) at (8,2) [draw]{};
\node[inner sep=2pt, circle](25) at (8,3) [draw]{};
\node[inner sep=2pt, circle](25) at (8,4) [draw]{};
\node[inner sep=2pt, circle](25) at (8,5) [draw]{};

\node[inner sep=2pt, circle](25) at (9,0) [draw]{};
\node[inner sep=2pt, circle](25) at (9,1) [draw]{};
\node[inner sep=2pt, circle](25) at (9,2) [draw]{};
\node[inner sep=2pt, circle](25) at (9,3) [draw]{};
\node[inner sep=2pt, circle](25) at (9,4) [draw]{};
\node[inner sep=2pt, circle](25) at (9,5) [draw]{};

\node[inner sep=2pt, circle](25) at (10,0) [draw]{};
\node[inner sep=2pt, circle](25) at (10,1) [draw]{};
\node[inner sep=2pt, circle](25) at (10,2) [draw]{};
\node[inner sep=2pt, circle](25) at (10,3) [draw]{};
\node[inner sep=2pt, circle](25) at (10,4) [draw]{};
\node[inner sep=2pt, circle, fill=black](25) at (10,5) [draw] {};

\node[inner sep=2pt, circle](25) at (11,0) [draw]{};
\node[inner sep=2pt, circle](25) at (11,1) [draw]{};
\node[inner sep=2pt, circle](25) at (11,2) [draw]{};
\node[inner sep=2pt, circle](25) at (11,3) [draw]{};
\node[inner sep=2pt, circle, fill=black](25) at (11,4)[draw]  {};
\node[inner sep=2pt, circle, fill=black](25) at (11,5)[draw]  {};

\node[inner sep=2pt, circle](25) at (12,0) [draw]{};
\node[inner sep=2pt, circle](25) at (12,1) [draw]{};
\node[inner sep=2pt, circle](25) at (12,2) [draw]{};
\node[inner sep=2pt, circle, fill=black](25) at (12,3) [draw] {};
\node[inner sep=2pt, circle, fill=black](25) at (12,4) [draw] {};
\node[inner sep=2pt, circle, fill=black](25) at (12,5) [draw] {};

\node[inner sep=2pt, circle](25) at (13,0) [draw]{};
\node[inner sep=2pt, circle](25) at (13,1) [draw]{};
\node[inner sep=2pt, circle, fill=black](25) at (13,2) [draw] {};
\node[inner sep=2pt, circle, fill=black](25) at (13,3) [draw] {};
\node[inner sep=2pt, circle, fill=black](25) at (13,4) [draw] {};
\node[inner sep=2pt, circle, fill=black](25) at (13,5) [draw] {};

\node[inner sep=2pt, circle](25) at (14,0) [draw]{};
\node[inner sep=2pt, circle, fill=black](25) at (14,1) [draw] {};
\node[inner sep=2pt, circle, fill=black](25) at (14,2) [draw] {};
\node[inner sep=2pt, circle, fill=black](25) at (14,3) [draw] {};
\node[inner sep=2pt, circle, fill=black](25) at (14,4) [draw] {};
\node[inner sep=2pt, circle, fill=black](25) at (14,5) [draw] {};

\node[black] at (-.5,7.5) {$t$};
\node[black] at (18.5,-.5) {$s$};
\node[black] at (7,-.6) {$\alpha\circ\rho_3\colon I_{14}\to Y$};
\node[black] at (-1.5,3) {\large{$I_5$}};

\node[black] at (4,6.5) {$\alpha\circ\rho_2\colon I_{9}\to Y$};
\node[black] at (12,6.5) {$C_4\colon I_{4}\to Y$};

\draw[very thick]  (1,5)--(1,1)--(2,0);
\draw[very thick]  (3,5)--(3,2)--(5,0);
\draw[very thick]  (5,5)--(5,3)--(8,0);
\draw[very thick]  (7,5)--(7,4)--(11,0);
\draw[very thick]  (9,5)--(14,0);

\end{tikzpicture}
$$
\caption{\label{fig:homotopy alpha c}  $\alpha\circ \rho_k \approx (\alpha\circ\rho_{k-1})\cdot C_M$ (illustrated with $\alpha\colon I_4 \to Y$ and $k = 3$).}
\end{figure}
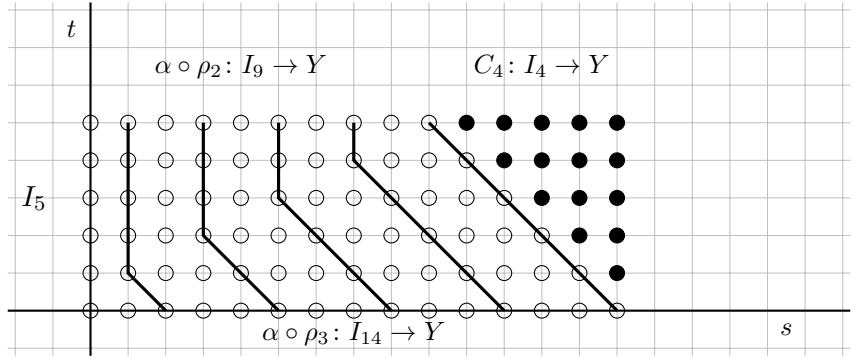
In the figure, black dots represent points sent to $y_0$ and open dots those points that are aggregated by either $\rho_k$ or $\rho_{k-1}$, depending on their $t$-coordinate, and then mapped by $\alpha$ to $Y$.  Note that some of these (open dot) points---at least those at either end of their row---will also be mapped to $y_0$.  For a fixed $t$-coordinate, open points are aggregated in groups that start after each line and continue up to the next line, including that point. The lines themselves are level sets of the homotopy $H$.  The formulas $H(0, t) = H(kM + k -1, t) = y_0$, $H(s, 0) = \alpha\circ\rho_{k}(s)$, and   $H(s, M+1) = \alpha\circ\rho_{k-1}(s)$ follow directly.  It remains to confirm continuity of $H$.  Here, the potential issue is similar to those faced in \cite[Sec.4]{LOS19a} (and especially Th.4.9 of that paper).  Namely, it is not sufficient to have $H$ continuous in each of its variables separately---it is easily seen that this is the case here.  Nonetheless, $H$ is indeed continuous.  At the risk of appearing over-cautious, we give a careful argument for this point.  For suppose we have $(s, t) \sim (s', t')$ in $I_{kM + k-1} \times I_{M+1}$.  Now $|(s+t) - (s'+t')| \leq |s-s'| + |t-t'| \leq  2$. Hence either we have both $(s, t)$ and $(s', t')$ in the region of $I_{kM + k-1} \times I_{M+1}$ given by $s+t \in [0, kM+k-1]$ (this region is indicated as the open dots in  \figref{fig:homotopy alpha c}), or we have both $(s, t)$ and $(s', t')$ in the region of $I_{kM + k-1} \times I_{M+1}$ given by $s+t \in [kM+k-2, (k+1)(M+1) - 1]$ (this region is indicated as the black dots plus the first two diagonals of open dots  \figref{fig:homotopy alpha c}).  But in the latter region, we have  $H(s, t) = y_0$ for all points, except possibly at the single point $((k-1)(M+1) - 2, M+1)$ and (only) in the case in which $k=2$, when we would have $H((M+1) - 2, M+1) = \alpha\circ\rho_1(M-1) = \alpha(M-1)$.  Now $\alpha$ is continuous, and so we have $\alpha(M-1) \sim_Y \alpha(M) = y_0$.  So in this region, the only two values that $H(s, t)$ can take are adjacent in $Y$, and so $H(s, t) \sim_Y H(s', t')$.

Now consider the region in which we have $s+t \in [0, kM+k-1]$.  Within this region, there are three possibilities: (i) Both $(s, t)$ and $(s', t')$ also satisfy $0 \leq s \leq t(k-1) - 1$ (the region to the left and above the line of---non-adjacent---open dots at  the angles of the lines, in \figref{fig:homotopy alpha c}), including that line of dots; (ii) Both $(s, t)$ and $(s', t')$ also satisfy $t(k-1) \leq s \leq kM + k-1 - t$ (the complement of the region (i) in the open dots); (iii) one in either region. Notice these cases are the same constraints that separate the formulas defining $H$.

Case (i): We have $H(s, t) =  \alpha\circ\rho_{k-1}(s)$ and $H(s', t') =  \alpha\circ\rho_{k-1}(s')$.  But $|s-s'| \leq 1$, and hence $|\rho_{k-1}(s) - \rho_{k-1}(s')| \leq 1$ (even in the case in which $k=2$).  That is, we have
$\rho_{k-1}(s) \sim_{I_M} \rho_{k-1}(s')$ and, since $\alpha$ is continuous, we have $H(s, t) \sim_Y H(s', t')$.

Case (ii): We have $H(s, t) =  \alpha\circ\rho_{k}(s+t)$ and $H(s', t') =  \alpha\circ\rho_{k}(s'+t')$.  Since $|(s+t) - (s'+t')| \leq 2$, we have $|\rho_{k}(s+t) - \rho_{k}(s'+t')| \leq 1$ \emph{because $k \geq 2$}.  Hence, again, we have $H(s, t) \sim_Y H(s', t')$.

Case (iii): We have one of $(s, t)$ and $(s', t')$ in each of the two regions.  WLOG, say that we have $(s, t)$ that satisfies $0 \leq s \leq t(k-1) - 1$ and $(s', t')$ that satisfies
$t'(k-1) \leq s' \leq kM + k-1 - t'$.  Then $H(s, t) =  \alpha\circ\rho_{k-1}(s)$ and $H(s', t') =  \alpha\circ\rho_{k}(s'+t')$.  Our goal, similarly to the other sub-cases, is to argue that
we must have $|\rho_{k-1}(s) - \rho_{k}(s'+t')| \leq 1$ and then conclude using continuity of $\alpha$.

We treat $k=2$ separately.  Here, $(s, t)$ satisfies $s \leq t-1$, or $t \geq s+1$, and $(s', t')$ satisfies  $t' \leq s'$.   If $t \geq s+3$, then $t -s\geq 3$ and any neighbour $(s', t')$ must have  $t' -s' \geq 1$, or $t' \geq s'+1$, because adjacency implies ${ (t-s) - (t'-s')} \leq 2$. Hence we have  $t = s+2$ or $t = s+1$.  If $t = s+2$, then the only possible neighbour that satisfies $t' \leq s'$ is $(s', t') = (s+1, t-1)$.  But then we have $s'+t' = s+t = 2s+2$, whence $\rho_2(s'+t') = s+1$.  If $t = s+1$, then the  neighbours that satisfies $t' \leq s'$ are
$\{ (s+1, t), (s+1, t-1), (s, t-1)\}$.  Then $s'+t' \in \{ s+t +1, s+t, s+t -1\} = \{ 2s+2, 2s+1, 2s\} $, hence $\rho_2(s'+t') \in \{ s+1, s\}$.  Meanwhile, $\rho_1(s) = s$, and so if $k=2$, we have
$|\rho_{k-1}(s) - \rho_{k}(s'+t')| \leq 1$.

Now assume that $k \geq 3$. For $(s, t)$ that satisfies $s \leq t(k-1) - 1$, consider the possible locations of a neighbour $(s', t')$ that satisfies $s' \geq  t' (k-1)$.  The point $(s, t)$ lies on some line parallel to the line $s = t(k-1) - 1$, which has slope $1/(k-1) < 1$ (in the usual ``$t$ against $s$" sense).  Any point above this line also satisfies $s \leq t(k-1) - 1$, and it follows that the only points adjacent to $(s, t)$ whose coordinates could possibly satisfy $s' \geq  t' (k-1)$ are the four points $(s+1, t)$, $(s+1, t-1)$, $(s, t-1)$, and $(s-1, t-1)$ (this last would not be possible were $k = 2$).  In particular, note that we are constrained to have
$$-2 \leq (s'+t') - (s+t) \leq 1.$$

Furthermore, if $(s, t)$ has any neighbours whose coordinates satisfy $s' \geq  t' (k-1)$, then the ``lower-right" neighbour $(s+1, t-1)$ must be one such.  But then we have $(s+1) \geq (t-1)(k-1)$, which may be translated into the lower bound of the following restriction on the range of $s$:
\begin{equation}\label{eq: range of s}
t(k -1) - k \leq  s \leq  t(k -1) - 1.
\end{equation}
For those $(s, t)$ that satisfy the part of this range
$$t(k -1) - (k-1) \leq  s \leq  t(k -1) - 1,$$
we have
$$t-1 = \rho_{k-1}(t(k -1) - (k-1)) \leq  \rho_{k-1}(s) \leq  \rho_{k-1}(t(k -1) - 1) = t-1,$$
so $\rho_{k-1}(s) = t-1$.  On the other hand, this partial range for $s$ may be re-written as
$$
tk- (k-1) \leq  s +t \leq  tk - 1,
$$
whence we have
$$tk -(k-1) - 2 \leq  s'+t' \leq  tk,$$
since $(s'+t') - (s+t) \in \{ -2, -1, 0, 1\}$.  In turn, this leads to $t-2 = \rho_k\big( (t-1)k - 1) \leq  \rho_k(s'+t') \leq \rho_k( tk) = t$, and so  $ \rho_k(s'+t') \in \{t-2, t-1, t\}$.  For all these $(s, t)$, then, we have $|\rho_{k-1}(s) - \rho_{k}(s'+t')| \leq 1$.  The only part of \eqref{eq: range of s} remaining is $s = t(k-1) - k = (t-1)(k-1)-1$.  Here we have  $\rho_{k-1}(s) = t-2$.  On the other hand, if  $s = t(k-1) - k$, then $s+t = (t-1)k$ and so $(t-1)k - 2 \leq s'+t' \leq (t-1)k + 1$, again because we have  $(s'+t') - (s+t) \in \{ -2, -1, 0, 1\}$.   Then we have
$t-2 = \rho_k\big( (t-1)k - 2\big) \leq \rho_k(s'+t') \leq \rho_k\big((t-1)k + 1\big) = t-1$.  Here also, then, we have $|\rho_{k-1}(s) - \rho_{k}(s'+t')| \leq 1$.

For the full range \eqref{eq: range of s} of possibilities for adjacent $(s, t)$ and $(s', t')$, we have $|\rho_{k-1}(s) - \rho_{k}(s'+t')| \leq 1$.  Therefore, we have
$\alpha\circ\rho_{k-1}(s) \sim_Y  \alpha\circ\rho_{k}(s'+t')$, since $\alpha$ is continuous, and the homotopy $H$ is continuous in case (iii) also.

This completes the proof of the first assertion of part (c).  The second is proved likewise, using an adaptation of the homotopy $H$.  We omit the details.
\end{proof}

\begin{proposition}\label{prop: product on classes}
Let $Y \subseteq \Z^n$ be any based digital image.
Suppose we have two pairs of subdivision-based homotopic based loops:
$\alpha\colon I_M \to Y$ and $\alpha'\colon I_{M'} \to Y$ with $[\alpha] = [\alpha']$;  and $\beta\colon I_N \to Y$ and $\beta'\colon I_{N'} \to Y$
with $[\beta] = [\beta']$.
Then   we have  $[\alpha\cdot\beta] = [\alpha'\cdot\beta']$.
\end{proposition}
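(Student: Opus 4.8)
The plan is to exhibit a single based loop to which suitable subdivisions of both $\alpha\cdot\beta$ and $\alpha'\cdot\beta'$ are based homotopic as based loops. First I would unwind the hypotheses via \defref{def: subdn homotopic loops}: fix $k,k'\ge 1$ with $k(M+1)=k'(M'+1)$ and a based homotopy of based loops $\alpha\circ\rho_k\approx\alpha'\circ\rho_{k'}\colon I_{k(M+1)-1}\to Y$, and likewise $l,l'\ge 1$ with $l(N+1)=l'(N'+1)$ and $\beta\circ\rho_l\approx\beta'\circ\rho_{l'}\colon I_{l(N+1)-1}\to Y$. Part~(a) of \lemref{lem: loop product basics} applied to these two pairs gives a based homotopy of based loops $(\alpha\circ\rho_k)\cdot(\beta\circ\rho_l)\approx(\alpha'\circ\rho_{k'})\cdot(\beta'\circ\rho_{l'})$, hence $[(\alpha\circ\rho_k)\cdot(\beta\circ\rho_l)]=[(\alpha'\circ\rho_{k'})\cdot(\beta'\circ\rho_{l'})]$. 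It therefore suffices to prove the general fact that for any based loops $\gamma\colon I_P\to Y$, $\delta\colon I_Q\to Y$ and any $p,q\ge 1$ one has $[(\gamma\circ\rho_p)\cdot(\delta\circ\rho_q)]=[\gamma\cdot\delta]$; applying this to $(\gamma,\delta,p,q)=(\alpha,\beta,k,l)$ and to $(\alpha',\beta',k',l')$ and combining with the previous sentence gives $[\alpha\cdot\beta]=[\alpha'\cdot\beta']$.

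To prove the general fact I would first record the iterate of part~(c) of \lemref{lem: loop product basics}: for a based loop $\mu\colon I_P\to Y$ and $p\ge 2$, repeatedly applying~(c) together with~(a) (using $C_P\approx C_P$) and strict associativity of concatenation yields based homotopies of based loops $\mu\circ\rho_p\approx\mu\cdot C_{(p-1)(P+1)-1}$ and $\mu\circ\rho_p\approx C_{(p-1)(P+1)-1}\cdot\mu$; here one uses that a concatenation of constant loops is a constant loop of the evident length and that $C_n\circ\rho_j=C_{(n+1)j-1}$ (for $p=1$ these degenerate to $\mu\circ\rho_1=\mu$). Using the first of these on $\gamma\circ\rho_p$, the second on $\delta\circ\rho_q$, then part~(a) and associativity, one gets a based homotopy of based loops $(\gamma\circ\rho_p)\cdot(\delta\circ\rho_q)\approx\gamma\cdot(C_c\cdot\delta)$ with $c=(p-1)(P+1)+(q-1)(Q+1)-1$ (the case $p=q=1$ being trivial). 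So the general fact reduces to an \emph{insertion lemma}: $[\gamma\cdot(C_r\cdot\delta)]=[\gamma\cdot\delta]$ for any based loops $\gamma,\delta$ and any $r\ge0$.

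For the insertion lemma I would argue by a direct subdivide-and-compare. With $q_0=P+Q+2$ and $p_0=P+Q+r+3$ one has $p_0\,q_0=q_0\,(P+Q+r+3)$, so $(\gamma\cdot\delta)\circ\rho_{p_0}$ and $(\gamma\cdot(C_r\cdot\delta))\circ\rho_{q_0}$ are based loops of the same length. Distributing the projections over concatenation by part~(b) of \lemref{lem: loop product basics} and replacing each $\circ\rho$ by constant-loop padding via the iterate of~(c), one finds $(\gamma\cdot\delta)\circ\rho_{p_0}\approx\gamma\cdot(C_A\cdot\delta)$ and $(\gamma\cdot(C_r\cdot\delta))\circ\rho_{q_0}\approx\gamma\cdot(C_B\cdot\delta)$ as based loops, where $A=(p_0-1)(P+Q+2)-1$ and $B=(q_0-1)(P+Q+2)+(r+1)q_0-1$; since $p_0-1=q_0+r$ and $P+Q+2=q_0$ one checks $A=B=q_0(q_0+r)-1$. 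Hence $(\gamma\cdot\delta)\circ\rho_{p_0}\approx(\gamma\cdot(C_r\cdot\delta))\circ\rho_{q_0}$ as based loops, so $\gamma\cdot\delta$ and $\gamma\cdot(C_r\cdot\delta)$ are subdivision-based homotopic by \defref{def: subdn homotopic loops}. This proves the insertion lemma, and with it the proposition.

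The main obstacle is this last step. The naive hope is to split a subdivision-based homotopy between the two products into a homotopy on the $\alpha$-half and one on the $\beta$-half and then apply part~(a) of \lemref{lem: loop product basics}; but the subdivision factors $k,k'$ forced by $[\alpha]=[\alpha']$ need not be compatible with the factors $l,l'$ forced by $[\beta]=[\beta']$, and part~(b) distributes a projection over a concatenation only through a single common factor, so no such clean splitting exists once $M\ne M'$ or $N\ne N'$. Circumventing this is precisely what forces the use of part~(c)---trading uniform subdivision for padding by constant loops---and is why the insertion lemma, rather than being a formal consequence of part~(a), requires its own reconciliation of subdivision factors with constant-loop lengths.
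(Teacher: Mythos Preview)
Your argument is correct. It relies on the same three ingredients as the paper's proof (parts (a), (b), (c) of \lemref{lem: loop product basics}), but organizes them differently.

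The paper proceeds in the opposite order from you. It first isolates the end-absorption fact $[C_N\cdot\alpha]=[\alpha]=[\alpha\cdot C_N]$ (your insertion lemma with one of $\gamma,\delta$ trivial), proved by writing $C_N=C_0^{\cdot(N+1)}$, distributing a $\rho_{M+1}$ via~(b), and absorbing each $C_M$ into $\alpha$ via~(c). With that in hand, the paper passes both sides to a \emph{common} subdivision level $kl$ using~(b), obtaining $[\alpha\cdot\beta]=[(\alpha'\circ\rho_{k'l})\cdot(\beta'\circ\rho_{l'k})]$; the only remaining issue is the mismatch $k'l\neq l'k$, which is fixed by trading the excess subdivision for constant loops via~(c) and then deleting those constants by end-absorption.

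Your route instead applies~(a) directly to the given homotopies at their natural (unequal) levels $k,l$ and $k',l'$, reducing everything to a single general fact $[(\gamma\circ\rho_p)\cdot(\delta\circ\rho_q)]=[\gamma\cdot\delta]$, and then to a middle-insertion lemma $[\gamma\cdot C_r\cdot\delta]=[\gamma\cdot\delta]$, which you prove by a clever length-matching choice of $p_0,q_0$ and the same iterate of~(c). The trade-off: the paper's decomposition yields the reusable identity-element fact $[C_N\cdot\alpha]=[\alpha]$ as a byproduct (it is invoked again in \thmref{thm: pi1} and in the proof of \thmref{thm: indep basept}), while your packaging is arguably more symmetric and makes the role of~(c) as ``subdivision $\leftrightarrow$ constant padding'' completely explicit. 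Your final paragraph correctly identifies exactly why the naive splitting via~(b) alone fails and why~(c) is essential---this is precisely the obstacle the paper's case split on $k'l$ versus $l'k$ is addressing.
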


\begin{proof}
First, we show that for any based loop $\alpha\colon I_M \to Y$, and any $N \geq 0$, we have $[C_N\cdot\alpha] = [\alpha]$.  In other words, $C_N\cdot\alpha\colon I_{N+M+1} \to Y$ and $\alpha$ are subdivision-based homotopic.  Recall that $C_0\colon I_0 \to Y$ denotes the constant map $\{0\} \mapsto y_0$.  It is also the case that $[\alpha\cdot C_N] = [\alpha]$.  The proof of this is entirely similar to the proof we give here, and we omit it.  These facts basically show that the subdivision-based  homotopy class of a constant loop plays the role of the identity, but we will prove this separately in the next result.

Write the constant loop $C_N \colon I_N \to Y$ as
$$C_N = C_0\cdot C_0 \cdot \overset{ (N+1)\text{-times} }{\cdots\ \ \  \cdots} \cdot C_0.$$
Then we have
$$
\begin{aligned}
(C_N\cdot\alpha)\circ\rho_{M+1} &= (C_0\circ\rho_{M+1})\cdot (C_0\circ\rho_{M+1}) \cdot \overset{ (N+1)\text{-times} }{\cdots\ \ \  \cdots} \cdot (C_0\circ\rho_{M+1}) \cdot (\alpha\circ\rho_{M+1})\\
&=C_M \cdot C_M \cdot \overset{ (N+1)\text{-times} }{\cdots\ \ \  \cdots} \cdot C_M \cdot (\alpha\circ\rho_{M+1}),
\end{aligned}
$$
from repeated use of part (b) of \lemref{lem: loop product basics} and the identification $C_0\circ\rho_{M+1} = C_M\colon S(I_0, M+1) = I_M \to Y$.
Then repeated use of part (c) of \lemref{lem: loop product basics} gives
$$
\begin{aligned}
{[C_N\cdot\alpha]} & = [(C_N\cdot\alpha)\circ\rho_{M+1}] = [C_M \cdot C_M \cdot \overset{ (N+1)\text{-times} }{\cdots\ \ \  \cdots} \cdot C_M \cdot (\alpha\circ\rho_{M+1})] \\
&= [C_M \cdot C_M \cdot \overset{ (N)\text{-times} }{\cdots} \cdot C_M \cdot (\alpha\circ\rho_{M+2})] \\
&= [C_M \cdot C_M \cdot \overset{ (N-1)\text{-times} }{\cdots\ \ \  \cdots} \cdot C_M \cdot (\alpha\circ\rho_{M+3})] \\
&= \cdots = [\alpha\circ\rho_{M+N+2}] = [\alpha].
\end{aligned}
$$

Now consider the various loops as in the hypotheses.  Since $[\alpha] = [\alpha']$ and $[\beta] = [\beta']$, we have  based homotopies of based loops  $\alpha\circ \rho_k \approx \alpha'\circ\rho_{k'}$ and $\beta\circ \rho_l \approx \beta'\circ\rho_{l'}$, for suitable $k, k', l, l'$ as in \defref{def: subdn homotopic loops}.  Using parts (a) and (b) of  \lemref{lem: loop product basics} we have
$$
\begin{aligned}
{[\alpha\cdot\beta]} &= [(\alpha\cdot\beta)\circ\rho_{kl}]  = [(\alpha\circ\rho_{kl}) \cdot (\beta\circ\rho_{kl})]\\
&= [\big((\alpha\circ\rho_{k})\circ\rho_{l}\big) \cdot \big( (\beta\circ\rho_{l})\circ\rho_{k}\big)]\\
&=  [\big((\alpha'\circ\rho_{k'})\circ\rho_{l}\big) \cdot \big( (\beta'\circ\rho_{l'})\circ\rho_{k}\big)]\\
&=  [(\alpha'\circ\rho_{k'l}) \cdot (\beta'\circ\rho_{l'k})].
\end{aligned}
$$
In the above steps, we used the second point of \lemref{lem: composition based homotopy loops}---in the form $\alpha\circ\rho_{k} \approx \alpha'\circ\rho_{k'}$ implies that $(\alpha\circ\rho_{k})\circ\rho_{l} \approx (\alpha'\circ\rho_{k'})\circ\rho_{l}$---to apply parts (a) of  \lemref{lem: loop product basics}, and likewise for the $\beta$ terms.

If $k'l = l'k$, then we may write the last term above as $[(\alpha' \cdot\beta')\circ\rho_{k'l}] = [\alpha' \cdot\beta']$, which is the desired conclusion.

Now suppose that, instead, we have $k'l > l'k$.  Write $k'l = l'k + r$ for some $r$. Then by repeatedly using the second statement in part (c) of  \lemref{lem: loop product basics}, we have
$$\alpha'\circ\rho_{k'l} \approx C_{M'} \cdot (\alpha'\circ\rho_{k'l-1})  \approx C_{M'} \cdot C_{M'} \cdot  (\alpha'\circ\rho_{k'l-2}) \approx \cdots,$$
which results in $\alpha'\circ\rho_{k'l} \approx C_{rM'+r-1} \cdot (\alpha'\circ\rho_{l'k})$, where we have identified
$$C_{M'} \cdot C_{M'} \cdot  \overset{ (r)\text{-times} }{\cdots} \cdot C_{M'} = C_{rM'+r-1}$$
and $\rho_{k'l-r} = \rho_{l'k}$.  A further application of part (a)  of  \lemref{lem: loop product basics} yields
$$[\alpha\cdot\beta] =  [C_{rM'+r-1} \cdot \big( (\alpha'\circ\rho_{l'k})  \cdot (\beta'\circ\rho_{l'k}) \big)] = [ (\alpha'\circ\rho_{l'k})  \cdot (\beta'\circ\rho_{l'k}) ],$$
with the last equality following from the first part of this proof.  Now part (b) of  \lemref{lem: loop product basics} gives
$$[\alpha\cdot\beta] = [ (\alpha' \cdot \beta')\circ\rho_{l'k}] = [ \alpha' \cdot \beta'],$$
which again is the desired conclusion.  The case in which $k'l < l'k$ is proved in an entirely similar way, using instead the first statement in part (c) of  \lemref{lem: loop product basics} together with the identity $[\alpha\cdot C_N] = [\alpha]$ mentioned at the start of this proof.  We omit the details.
\end{proof}

\propref{prop: product on classes} means that the set of subdivision-based homotopy equivalence classes of based loops inherits a well-defined product, defined by
\begin{equation}\label{eq: product of classes}
[\alpha]\cdot[\beta] = [\alpha\cdot\beta]
\end{equation}
for based loops $\alpha\colon I_M \to Y$ and $\beta\colon I_N \to Y$.    For $Y\subseteq \Z^n$ a based digital image, denote the set of subdivision-based homotopy equivalence classes of based loops in $Y$ by $\pi_1(Y, y_0)$.

Before the main result, we give a final technical result that will establish inverses for our fundamental group.  We state this result for general paths, as we have need for the more general statement in the sequel.

\begin{definition}\label{def: inverse path}
For any path $\gamma\colon I_M \to Y$, let $\overline{\gamma} \colon I_M \to Y$ denote the \emph{reverse path}
$\overline{\gamma}(t) = \gamma(M-t)$.  If $\alpha$ is a based loop in $Y$, then so too is its reverse $\overline{\alpha}$.
\end{definition}

\begin{lemma}\label{lem: inverses}
Suppose $\gamma\colon I_M \to Y$ is a path in $Y$.  Then the concatenations of paths $\gamma\cdot \overline{\gamma}, \overline{\gamma} \cdot \gamma\colon I_{2M+1} \to Y$,
as in   \defref{def: concatenation}, are loops in $Y$, based at $\gamma(0)$ and $\gamma(M)$ respectively.
We have a homotopy \emph{relative the endpoints}, which is to say that the homotopy $H$ satisfies $H(0, t) = \gamma(0) = H(2M+1)$ for all $t$,
$$\gamma\cdot \overline{\gamma} \approx C^{\gamma(0)}_{2M+1} \colon I_{2M+1} \to Y,$$
where $C^{\gamma(0)}_{2M+1} \colon I_{2M+1} \to Y$ denotes the constant loop at $\gamma(0) \in Y$.  Likewise, we have a homotopy relative the endpoints
$\overline{\gamma} \cdot \gamma\approx C^{\gamma(M)}_{2M+1}$.

Furthermore, for the short concatenations, as in  \eqref{eq: short concat} of \defref{def: concatenation}, we also have homotopies relative the endpoints
$\gamma*\overline{\gamma} \approx C^{\gamma(0)}_{2M}$ and $\overline{\gamma} * \gamma\approx C^{\gamma(M)}_{2M}$.
\end{lemma}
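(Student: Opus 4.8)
The plan is to construct an explicit based homotopy $H\colon I_{2M+1}\times I_R\to Y$ from $\gamma\cdot\overline{\gamma}$ to the constant loop $C^{\gamma(0)}_{2M+1}$, for a suitable length $R$, mimicking the topological ``pull back along the path'' homotopy but being careful about adjacencies in the product (the same subtlety met in \lemref{lem: loop product basics}(c)). The intuitive picture: at homotopy-time $t$, we traverse $\gamma$ only up to the point $\gamma(\min(s,M-t))$, sit at $\gamma(M-t)$ for a while, and then come back; at $t=M$ the loop has collapsed to the constant loop at $\gamma(0)$. Concretely I would try
$$
H(s,t)=\begin{cases}\gamma(s) & 0\leq s\leq M-t\\ \gamma(M-t) & M-t\leq s\leq M+1+t\\ \gamma(2M+1-s) & M+1+t\leq s\leq 2M+1,\end{cases}
$$
with $R=M$, and then check: (i) $H(s,0)$ recovers $\gamma\cdot\overline{\gamma}$ as defined by \eqref{eq: concat}, using that the middle branch at $t=0$ contributes the single ``pause'' value $\gamma(M)$ at $s=M+1$ — wait, one must align the pause of the concatenation (which sits at $s=M$ giving $\gamma(M)$, since $\alpha\cdot\beta(M)=\alpha(M)$ and $\alpha\cdot\beta(M+1)=\beta(0)=\gamma(M)$) with the middle branch; a small reindexing of the branch cutoffs may be needed so the three pieces agree with \eqref{eq: concat} at $t=0$; (ii) $H(s,M)=\gamma(0)$ for all $s$; (iii) $H(0,t)=\gamma(0)$ and $H(2M+1,t)=\gamma(0)$ for all $t$, so it is a homotopy relative the endpoints.

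The main work, and the expected obstacle, is verifying continuity of $H$ on $I_{2M+1}\times I_R$ with the \emph{jointly continuous} product adjacency, not merely separate continuity — exactly the issue flagged in \remref{rem: graph product homotopy} and handled at length in \lemref{lem: loop product basics}(c). So I would take adjacent $(s,t)\sim(s',t')$, hence $|s-s'|\leq 1$ and $|t-t'|\leq 1$, and argue region by region according to which of the three branches each point falls in. Within a single branch the verification is immediate from continuity of $\gamma$ together with $|s-s'|\leq 1$ (for the outer branches) or the fact that the middle branch is constant in $s$ with $|t-t'|\leq 1$. The delicate part is when the two points straddle a branch boundary; there one uses that the branches were defined so as to \emph{agree} along the dividing lines (e.g.\ at $s=M-t$ both the first and second branch give $\gamma(M-t)$), so that a point on the boundary can be evaluated by either formula, and then one reduces to an in-branch comparison after possibly replacing $(s',t')$ by its value computed in the neighbouring branch; since the relevant arguments of $\gamma$ differ by at most $2$ in index but land within a clique once the boundary values are accounted for, continuity of $\gamma$ finishes it. (Unlike \lemref{lem: loop product basics}(c), there is no $\rho_k$ here, so no ``$k\geq 2$'' hypothesis is needed and the index bookkeeping is lighter.)

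Finally, the statement for $\overline{\gamma}\cdot\gamma$ follows by applying the case just proved to the path $\overline{\gamma}$ in place of $\gamma$, noting $\overline{\overline{\gamma}}=\gamma$ and $\overline{\gamma}(0)=\gamma(M)$. For the short concatenations $\gamma*\overline{\gamma}$ and $\overline{\gamma}*\gamma$ of length $2M$, I would either (a) run the same construction on $I_{2M}\times I_M$ with the middle ``pause'' branch shortened by one (so at $t=0$ the formula reproduces \eqref{eq: short concat} rather than \eqref{eq: concat}), with an identical continuity discussion; or (b) observe that $\gamma*\overline{\gamma}$ and $\gamma\cdot\overline{\gamma}$ differ only by one repeated value at the join and deduce the short-concatenation statement from the long one by a one-step ``remove a pause'' homotopy relative the endpoints (essentially the $k=1$ shadow of part (c) of \lemref{lem: loop product basics}). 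I would present (a) as the cleaner self-contained option and simply remark that the details are a routine adaptation of the argument for $\gamma\cdot\overline{\gamma}$, which I would omit.
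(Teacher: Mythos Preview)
Your proposal is correct and follows essentially the same approach as the paper: the homotopy $H$ you write down is exactly the one used there (so no reindexing is needed at $t=0$---the three branches already reproduce \eqref{eq: concat}), and your option (a) for the short concatenation is the paper's route as well. The only differences are organizational: the paper checks continuity of $H$ by decomposing $I_{2M+1}\times I_M$ into two \emph{overlapping} regions (region $A$: $s+t\ge M$ and $s-t\le M+1$; region $B$: $s+t\le M+1$ or $s-t\ge M$) chosen so that any adjacent pair lies entirely in one region, which avoids the straddling case analysis you anticipate; and for the short-concatenation homotopy $H'$ it sidesteps a fresh continuity check by observing that $H'=H$ on $[0,M]\times I_M$ and $H'=H\circ(T\times\mathrm{id}_{I_M})$ on $[M,2M]\times I_M$ for a translation $T$.
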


\begin{proof}
All assertions are justified with the same homotopy.  The main point is to check its continuity.  We begin with the homotopy $\gamma\cdot \overline{\gamma} \approx C^{\gamma(0)}_{2M+1}$.  Define a function $H \colon I_{2M+1}\times I_M \to Y$ by
$$H(s, t) = \begin{cases}
\gamma(s) & 0 \leq s \leq M-t\\
\gamma(M-t) & M-t \leq s \leq M+1+t\\
\overline{\gamma}\big(s-(M+1)\big) = \gamma(2M+1-s)& M+1+t  \leq s \leq 2M+1.\end{cases}
$$
This begins at
$$H(s, 0) = \begin{cases}
\gamma(s) & 0 \leq s \leq M\\
\overline{\gamma}\big(s-(M+1)\big)& M+1  \leq s \leq 2M+1,\end{cases}
$$
which is  the concatenation $\gamma\cdot \overline{\gamma}$.  It ends at the constant loop $H(s, M) = \gamma(0)$, and satisfies $H(0, t) =  \gamma(0)$ and $H(0, M) =  \overline{\gamma}(M) = \gamma(0)$, and so is the desired homotopy assuming continuity.

To check that $H$ is continuous,  divide $I_{2M+1} \times I_{M}$ into two overlapping regions:  Region $A$, consisting of those points $(s, t)$  that satisfy both $s+t \geq M$ and $s-t \leq M+1$; and region $B$, consisting of the points that satisfy either $s+t \leq M+1$ or $s-t \geq M$.  Now any pair of adjacent points $(s, t)\sim (s', t')$ in $I_{2M+1} \times I_{M}$ satisfies $|(s-t) - (s'-t')| \leq 2$ and $|(s+t) - (s'+t')| \leq 2$, and hence either both lie in region $A$ or both lie in region $B$.  Suppose first that both lie in region $A$.  From the formula for $H$, in this case we have $H(s, t) =  \gamma(M-t)$ and $H(s', t') = \gamma(M-t')$.  If $(s, t)\sim (s', t')$, then we have $|t' - t| \leq 1$, hence $|(M-t') - (M-t)| \leq 1$, and continuity of $\gamma$ then gives
$H(s', t') \sim_Y H(s, t)$.  On the other hand, suppose adjacent points $(s, t) \sim (s', t')$ both lie in region $B$.
First suppose that $s+t \leq M+1$ and $s'+t' \leq M+1$ (the left-hand half of region $B$).  Here we have
$$H(s, t) = \begin{cases} \gamma(s) & s +t \leq M\\
\gamma(M-t) & s +t = M+1.\end{cases}$$
If both $(s, t)$ and $(s', t')$ satisfy $s+t \leq M$ and $s'+t' \leq M$, then we have $H(s, t) =  \gamma(s)$ and $H(s', t') = \gamma(s')$, and  continuity of $\gamma$ gives
$H(s', t') \sim_Y H(s, t)$.  If both $(s, t)$ and $(s', t')$ satisfy $s+t = M+1$ and $s'+t' = M+1$, then we have $H(s, t) =  \gamma(M-t)$ and $H(s', t') = \gamma(M-t')$, and again continuity of $\gamma$ gives $H(s', t') \sim_Y H(s, t)$.  Suppose  $(s, t)$ satisfies $s+t \leq M$, so that $H(s, t) = \gamma(s)$,  and  $(s', t')$ satisfies $s'+t' = M+1$ so that $H(s', t') = \gamma(M - t') = \gamma(s'-1)$.  This is only possible if we also have $s' \geq  s$ which, assuming we have $(s, t)\sim (s', t')$, means that we have $s' = s$ or $s' = s+1$.  Then $\gamma(s) \sim \gamma(s'-1)$ in either case, so we have $H(s, t)  \sim H(s', t')$ here.  For the other remaining possibillty, when  $s+t = M+1$ and $s'+t' \leq  M$, we find that $H(s, t)  \sim H(s', t')$ follows, interchanging the roles of $(s, t)$ and $s', t')$ in the last few steps.  It remains to confirm that $H$ preserves adjacency for adjacent points $(s, t)\sim (s', t')$  that satisfy  $s-t \geq M+1$ and $s'-t' \geq M+1$ (the right-hand half of region $B$).  The same argument, \emph{mutatis mutandis}, as we used for the left-hand half of region $B$ will confirm $H(s, t)  \sim H(s', t')$ here also.  We omit the details.  This completes the check of continuity for $H$ on  $I_{2M+1}\times I_M$, and with it the proof of the first assertion.

The second assertion now follows by interchanging the roles of $\gamma$ and $\overline{\gamma}$, with  the observation that the reverse of $\overline{\gamma}$ is $\gamma$.

For the short concatenations, we modify the homotopy to
$$H' \colon I_{2M}\times I_M \to Y$$
defined by
$$H'(s, t) = \begin{cases}
\gamma(s) & 0 \leq s \leq M-t\\
\gamma(M-t) & M-t \leq s \leq M+t\\
\overline{\gamma}(s-M) = \gamma(2M-s)& M+t  \leq s \leq 2M.\end{cases}
$$
Continuity of $H'$ follows from that of $H$.  This is because we have $H' = H\colon [0, M]\times I_M \to Y$, hence $H'$ is continuous here.  Also, if we denote by $T\colon [M, 2M] \to [M+1, 2M+1]$ the translation $T(s) = s+1$, which is evidently continuous, then $T \times \mathrm{id}_{I_M} \colon [M, 2M]\times I_M \to [M+1, 2M+1]\times I_M$ is continuous (cf.~\lemref{lem: map product conts}).  Therefore
$H' = H\circ (T \times \mathrm{id}_{I_M})\colon [M, 2M]\times I_M \to Y$ is continuous on $[M, 2M]\times I_M$ also.   Now any two adjacent points $(s, t) \sim (s', t')$ in $[0, 2M]\times I_M$ must either lie both in   $[0, M]\times I_M$ or both in $[M, 2M]\times I_M$.  Since the restriction of $H'$ to either of these sub-rectangles is continuous, it follows  that $H'$ is continuous.  A direct check now confirms that $H'$ is a homotopy, relative the endpoints, from $\gamma*\overline{\gamma}$ to the constant map $C^{\gamma(0)}_{2M}$.  Just as in the first part, interchanging the roles of $\gamma$ and $\overline{\gamma}$ is sufficient to conclude for $\overline{\gamma}*\gamma \approx C^{\gamma(M)}_{2M}$.
\end{proof}

\begin{theorem}[Digital Fundamental Group]\label{thm: pi1}
The set of subdivision-based homotopy equivalence classes of based loops in a digital image $Y$, $\pi_1(Y, y_0)$, with the product \eqref{eq: product of classes}, is a group.
\end{theorem}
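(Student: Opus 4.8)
The plan is to verify the three group axioms for $\pi_1(Y, y_0)$ directly, since all the substantive work has already been done in the preceding results. That the rule $[\alpha]\cdot[\beta] = [\alpha\cdot\beta]$ gives a well-defined binary operation on subdivision-based homotopy classes of based loops is precisely \propref{prop: product on classes}, so it remains to establish associativity, a two-sided identity, and two-sided inverses.

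For associativity I would invoke the observation (recorded just after \defref{def: concatenation}) that concatenation of based loops is \emph{strictly} associative: given based loops $\alpha\colon I_M \to Y$, $\beta\colon I_N \to Y$, $\gamma\colon I_P \to Y$, both $(\alpha\cdot\beta)\cdot\gamma$ and $\alpha\cdot(\beta\cdot\gamma)$ are based loops of length $M+N+P+2$ and are equal as functions, since each formula reduces to ``$\alpha$ on $[0,M]$, $\beta$ shifted on $[M+1, M+N+1]$, $\gamma$ shifted on $[M+N+2, M+N+P+2]$.'' Hence $([\alpha]\cdot[\beta])\cdot[\gamma] = [(\alpha\cdot\beta)\cdot\gamma] = [\alpha\cdot(\beta\cdot\gamma)] = [\alpha]\cdot([\beta]\cdot[\gamma])$. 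For the identity element, the candidate is the class of a constant loop, and the first point to nail down is that all constant loops represent a single class: given $C_M\colon I_M\to Y$ and $C_N\colon I_N\to Y$, pick $k, l\geq 1$ with $k(M+1) = l(N+1)$; then $C_M\circ\rho_k$ and $C_N\circ\rho_l$ are both the constant loop $C_{k(M+1)-1}$ of the common length, hence trivially based-homotopic as based loops, so $[C_M] = [C_N]$ by \defref{def: subdn homotopic loops}. Writing $1 \in \pi_1(Y, y_0)$ for this class, the identities $[C_N\cdot\alpha] = [\alpha]$ and $[\alpha\cdot C_N] = [\alpha]$ proved in the course of \propref{prop: product on classes} show at once that $1$ is a two-sided identity.

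For inverses, the candidate for $[\alpha]^{-1}$ is $[\overline{\alpha}]$, with $\overline{\alpha}$ the reverse loop of \defref{def: inverse path}. Since a based loop $\alpha\colon I_M\to Y$ satisfies $\alpha(0) = \alpha(M) = y_0$, applying \lemref{lem: inverses} with $\gamma = \alpha$ yields a homotopy relative the endpoints $\alpha\cdot\overline{\alpha} \approx C^{y_0}_{2M+1}$; as the endpoints are $y_0$, this is in particular a based homotopy of based loops, so $[\alpha]\cdot[\overline{\alpha}] = [\alpha\cdot\overline{\alpha}] = [C^{y_0}_{2M+1}] = 1$, and symmetrically $[\overline{\alpha}]\cdot[\alpha] = [\overline{\alpha}\cdot\alpha] = [C^{y_0}_{2M+1}] = 1$ from the second homotopy of \lemref{lem: inverses}. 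I expect no step here to present a genuine obstacle: the hard analysis (the continuity verifications in \lemref{lem: loop product basics}, \propref{prop: product on classes}, and \lemref{lem: inverses}) is already in place, and what remains is bookkeeping. The only point that warrants an explicit word is the reduction of all constant loops to a single class, and with that observation in hand the identity and inverse axioms drop out cleanly.
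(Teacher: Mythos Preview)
Your proof is correct and follows essentially the same route as the paper's own proof. The only cosmetic difference is in the identity step: the paper appeals directly to \lemref{lem: loop product basics}(c) to write $[\alpha] = [\alpha\circ\rho_2] = [\alpha\cdot C_M]$, whereas you cite the derived fact $[C_N\cdot\alpha] = [\alpha] = [\alpha\cdot C_N]$ established at the start of the proof of \propref{prop: product on classes}; since that fact is itself obtained from \lemref{lem: loop product basics}(c), the two arguments are really the same.
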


\begin{proof}
The product \eqref{eq: product of classes} is associative because the product of based loops is associative.   For any $N, M\geq 0$, we have
$$C_M\circ \rho_{N+1} = C_N\circ \rho_{M+1} = C_{MN + M +N} \colon I_{MN + M +N} \to Y.$$
Thus, $C_M \colon I_M \to Y$ and $C_N \colon I_N \to Y$ are subdivision-based homotopy equivalent, and we have $[C_M] = [C_N] \in \pi_1(Y, y_0)$.  Write this class as $\mathbf{e}$.  For any based loop $\alpha: I_M \to Y$, we have $[\alpha] = [\alpha\circ \rho_2]$, by definition.  Now part (c) of   \lemref{lem: loop product basics} gives
$$[\alpha] = [\alpha\circ\rho_2] = [\alpha\cdot C_M] = [\alpha]\cdot \mathbf{e}.$$
Likewise, we have $[\alpha] =  \mathbf{e}\cdot [\alpha]$, and thus $\mathbf{e} \in \pi_1(Y, y_0)$ is a two-sided identity element.

For any based loop $\alpha\colon I_M \to Y$, the reverse loop $\overline{\alpha} \colon I_M \to Y$ acts as the inverse.
From \lemref{lem: inverses} we have, in $\pi_1(Y, y_0)$,
$$[\alpha]\cdot [\overline{\alpha}] =  [ \alpha\cdot \overline{\alpha}] = [C_{2M+1}] = \mathbf{e},$$
and
$$[\overline{\alpha}]\cdot [\alpha] =  [ \overline{\alpha} \cdot \alpha] = [C_{2M+1}] = \mathbf{e}.$$
Thus, $[\overline{\alpha}]$ is a two-sided inverse element of $[\alpha] \in \pi_1(Y, y_0)$.
\end{proof}

Induced homomorphisms now follow just as in the development of these ideas in the topological setting.  Suppose we have a based map $f\colon X \to Y$ of digital images $X \subseteq \Z^m$ and $Y \subseteq \Z^n$, with basepoints $x_0 \in X$ and $y_0 \in Y$ so that $f(x_0) = y_0$.  For $\alpha \colon I_M \to X$ any based loop  in $X$, the composition $f\circ \alpha\colon I_M \to Y$ is a based loop in $Y$.  Say we have $[\alpha] = [\alpha'] \in \pi_1(X; x_0)$, for another based loop $\alpha' \colon I_N \to X$, so that there is some based homotopy of based loops
$$H \colon I_{kM+k-1} \times I_T \to X$$
from $\alpha\circ \rho_k$ to $\alpha'\circ\rho_l$, with $l$ such that $l(M +1) = l(N+1)$.  Then $f\circ H \colon I_{kM+k-1} \times I_T \to Y$ gives $[f\circ\alpha] = [f\circ\alpha'] \in \pi_1(Y; y_0)$.    So we may set $f_*([\alpha]) = [f\circ\alpha]$ to define a map
\begin{equation}\label{eq: induced hom}
f_* \colon \pi_1(X; x_0) \to \pi_1(Y; y_0).
\end{equation}

\begin{lemma}\label{lem: induced hom}
Let $f \colon X \to Y$ be any based map of based digital images $X \subseteq \Z^m$ and $Y \subseteq \Z^n$.  Then \eqref{eq: induced hom} is a homomorphism of groups.
If $g \colon Y \to Z$ is another based map, then we have $(g\circ f)_* = g_* \circ f_*$.  Furthermore, if $f \approx g\colon X \to Y$ are based-homotopic maps, then the homomorphisms $f_*$ and $g_*$ agree. Namely, we have $f_*(x) = g_*(x)$ for all $x \in \pi_1(X; x_0)$.
\end{lemma}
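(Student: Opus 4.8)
The plan is to verify the three assertions in turn; each reduces to an elementary check once the definitions are unwound, and only the last one has any real content. Throughout I would work with a fixed representative based loop for each class and use that $f_*$ is already known to be well-defined (from the paragraph preceding the lemma) together with \propref{prop: product on classes} for the product of classes.

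For the homomorphism property it suffices to observe that composition on the left with $f$ commutes with concatenation: for based loops $\alpha\colon I_M\to X$ and $\beta\colon I_N\to X$, the piecewise formula \eqref{eq: concat} gives at once $f\circ(\alpha\cdot\beta)=(f\circ\alpha)\cdot(f\circ\beta)$ as based loops $I_{M+N+1}\to Y$ (the breakpoint and the two cases are untouched by post-composition with $f$). Hence $f_*([\alpha]\cdot[\beta])=f_*([\alpha\cdot\beta])=[f\circ(\alpha\cdot\beta)]=[(f\circ\alpha)\cdot(f\circ\beta)]=[f\circ\alpha]\cdot[f\circ\beta]=f_*([\alpha])\cdot f_*([\beta])$. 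For functoriality one uses only associativity of composition of based maps: for any based loop $\alpha$ in $X$ one has $(g\circ f)\circ\alpha=g\circ(f\circ\alpha)$, so $(g\circ f)_*([\alpha])=[g\circ(f\circ\alpha)]=g_*([f\circ\alpha])=g_*(f_*([\alpha]))$, which is $(g_*\circ f_*)([\alpha])$.

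The substantive point is homotopy invariance. Suppose $F\colon X\times I_N\to Y$ is a based homotopy from $f$ to $g$, so $F(x,0)=f(x)$, $F(x,N)=g(x)$, and $F(x_0,t)=y_0$ for all $t$. Given a based loop $\alpha\colon I_M\to X$, I would form the composite $G=F\circ(\alpha\times\mathrm{id}_{I_N})\colon I_M\times I_N\to Y$. The map $\alpha\times\mathrm{id}_{I_N}$ is continuous with respect to the product adjacency on $I_M\times I_N$ by \lemref{lem: map product conts}, so $G$ is a (continuous) based map. A direct check gives $G(s,0)=f\circ\alpha(s)$, $G(s,N)=g\circ\alpha(s)$, and $G(0,t)=G(M,t)=F(x_0,t)=y_0$ for all $t\in I_N$; thus $G$ is a based homotopy of based loops (in the sense of \defref{def: Based Homotopy of loops}) from $f\circ\alpha$ to $g\circ\alpha$, both of length $M$. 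Finally, two based loops of the same length that are based-homotopic as based loops are in particular subdivision-based homotopic as based loops (take $k=l=1$ in \defref{def: subdn homotopic loops}, so that $\rho_k,\rho_l$ are identity maps), whence $[f\circ\alpha]=[g\circ\alpha]$ in $\pi_1(Y;y_0)$, i.e. $f_*([\alpha])=g_*([\alpha])$ for every class.

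The only places where care is needed are the continuity of the auxiliary map $\alpha\times\mathrm{id}_{I_N}$ against the (non-graph-product) adjacency relation on $X\times I_N$, and the observation that plugging a based loop into a based homotopy of maps lands one in the equivalence relation defining $\pi_1$; both are supplied by results already available (\lemref{lem: map product conts} and \defref{def: subdn homotopic loops}) and require no new work. So I expect the writeup to be short, with the homotopy-invariance paragraph being the one worth spelling out in full.
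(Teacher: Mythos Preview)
Your proposal is correct and follows essentially the same approach as the paper: both verify the homomorphism property via the identity $f\circ(\alpha\cdot\beta)=(f\circ\alpha)\cdot(f\circ\beta)$, treat functoriality as a tautology from associativity of composition, and prove homotopy invariance by composing the given based homotopy with $\alpha\times\mathrm{id}_{I_N}$ (citing \lemref{lem: map product conts} for continuity) to obtain a based homotopy of based loops $f\circ\alpha\approx g\circ\alpha$. Your explicit remark that based-homotopic loops of the same length are subdivision-based homotopic via $k=l=1$ is a detail the paper leaves implicit, but otherwise the arguments coincide.
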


\begin{proof}
The first assertion follows directly from the definitions.  Indeed, we may repeatedly re-write $f_*([\alpha]\cdot[\beta])$ as
$$f_*([\alpha \cdot \beta]) =  [f\circ(\alpha \cdot \beta)] = [(f\circ \alpha) \cdot (f\circ \beta)] = [f\circ \alpha] \cdot [f\circ \beta] = f_*([\alpha])\cdot f_*([\beta]).$$
The second point is more-or-less tautological.
For the third point, suppose that $H \colon X \times I_N \to Y$ is a based homotopy from $f$ to $g$.  For any based loop $\alpha\colon I_M \to X$, the composition
$$H\circ (\alpha \times \mathrm{id}_{I_N}) \colon I_M \times I_N \to Y$$
is a based homotopy of based loops from $f\circ\alpha$ to $g\circ \alpha$.  Notice that  $\alpha \times \mathrm{id}_{I_N}\colon I_M \times I_N \to X \times I_N$ is continuous, by the observation of
\lemref{lem: map product conts}.  Thus we have $[f\circ\alpha] = [g\circ \alpha] \in \pi_1(Y; y_0)$.  The result follows.
\end{proof}

We prove a more general version of \lemref{lem: induced hom} in \thmref{thm: subdn htpic sam hom} below.  But at this point, we may draw the following conclusions.  We say that a based digital image $X$ is \emph{based-contractible} if there is a based homotopy
$$\mathrm{id}_X \approx C_{x_0}\colon X \to X$$
from the identity map of $X$ to the constant map of $X$ at $x_0$.

\begin{corollary}\label{cor: contractible pi1}
If $X$ is based-contractible, then we have $\pi_1(X; x_0) \cong \{ \mathbf{e} \}$, the trivial group.
\end{corollary}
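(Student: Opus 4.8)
The plan is to deduce this purely formally from the functoriality and homotopy-invariance of induced homomorphisms recorded in \lemref{lem: induced hom}, applied to the based homotopy $\mathrm{id}_X \approx C_{x_0}$ supplied by the hypothesis of based-contractibility. The two ingredients I need are: that the identity map induces the identity homomorphism, and that the constant map induces the trivial (constant) homomorphism.

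First I would check that $(\mathrm{id}_X)_* = \mathrm{id}_{\pi_1(X; x_0)}$. This is immediate from the definition of the induced map: for any based loop $\alpha\colon I_M \to X$ we have $(\mathrm{id}_X)_*([\alpha]) = [\mathrm{id}_X\circ\alpha] = [\alpha]$. Next I would compute $(C_{x_0})_*$. For any based loop $\alpha\colon I_M \to X$, the composite $C_{x_0}\circ\alpha\colon I_M \to X$ is the constant loop at $x_0$ of length $M$, which is exactly $C_M$. Hence $(C_{x_0})_*([\alpha]) = [C_{x_0}\circ\alpha] = [C_M] = \mathbf{e}$, where the last equality uses the fact, established in the proof of \thmref{thm: pi1}, that all constant loops (of whatever length) represent the single class $\mathbf{e}$. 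So $(C_{x_0})_*$ sends every element to $\mathbf{e}$.

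Finally, the third assertion of \lemref{lem: induced hom}, applied to the based homotopy $\mathrm{id}_X \approx C_{x_0}$, gives the equality of homomorphisms $(\mathrm{id}_X)_* = (C_{x_0})_*\colon \pi_1(X; x_0) \to \pi_1(X; x_0)$. Combining the two computations, for every $[\alpha] \in \pi_1(X; x_0)$ we obtain
$$[\alpha] = (\mathrm{id}_X)_*([\alpha]) = (C_{x_0})_*([\alpha]) = \mathbf{e},$$
so $\pi_1(X; x_0)$ has a single element and $\pi_1(X; x_0) \cong \{\mathbf{e}\}$.

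I do not expect any genuine obstacle: the statement is a formal corollary of the preceding lemma together with the normalization $[C_M] = \mathbf{e}$ from \thmref{thm: pi1}. The only place that warrants a moment's care is the bookkeeping in the constant-map computation—recognizing that $C_{x_0}\circ\alpha$ is literally the length-$M$ constant loop $C_M$, and then invoking the identification of all constant-loop classes rather than worrying about their differing lengths.
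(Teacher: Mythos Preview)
Your proposal is correct and follows essentially the same argument as the paper: compute that $(\mathrm{id}_X)_*$ is the identity and $(C_{x_0})_*$ is trivial (via $C_{x_0}\circ\alpha = C_M$ and $[C_M]=\mathbf{e}$), then invoke the third part of \lemref{lem: induced hom} on the based homotopy $\mathrm{id}_X \approx C_{x_0}$ to conclude that every class equals $\mathbf{e}$.
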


\begin{proof}
Let $\alpha\colon I_M \to X$ be any based loop in $X$.  Then   $C_{x_0}\circ \alpha = C_M \colon I_M \to X$, and so we have $(C_{x_0})_*([\alpha]) = [C_M] = \mathbf{e}$.  Thus, the constant map induces the trivial endomorphism of $\pi_1(X; x_0)$.  The identity map, meanwhile, induces the identity endomorphsim.  If $X$ is based-contractible, then \lemref{lem: induced hom} implies these two endomorphisms agree, and it follows  that  $\pi_1(X; x_0)$ must be trivial.
\end{proof}

\begin{example}\label{ex: contractible cube}
Any interval $I_M \subseteq \Z$, more generally any $n$-cube $(I_M)^n \subseteq \Z^n$, is based-contractible.
Indeed,  the homotopy $H\colon I_M \times I_M \to I_M$ defined by
$$H(s, t) = \begin{cases} s & 0 \leq s \leq M - t\\ M-t & M - t < s \leq M\end{cases}$$
begins at  the identity $\text{id}_{I_M}$ and ends at  the constant map  $C_0$.  Furthermore, we have
$H(0, t) = 0$ for all $t=0, \ldots, M$, so the homotopy is based.
Note that  continuity of this contracting homotopy does not follow from the argument of  \cite[Ex.2.9]{Bo06a}, where the same homotopy is used to show contractibility but using the notion of homotopy that flows from the graph product.  Cf.~\remref{rem: graph product homotopy} above.  Nonetheless, this homotopy is still continuous in our sense, as a careful check reveals.  We omit details of this check.

For the $n$-cube, we may assemble a contracting homotopy using this homotopy in each coordinate.  We omit details of this, since we may conclude triviality of the fundamental group more generally from our result on products below.
\end{example}

Independence of the choice of basepoint follows exactly as in the topological setting.

\begin{theorem}\label{thm: indep basept}
Let $Y$ be any digital image.  Suppose that a path $\gamma\colon I_N \to Y$ has $\gamma(0) = y'_0$ and $\gamma(N) = y_0$.  We have an isomorphism of fundamental groups
$$\Phi \colon  \pi_1(Y; y'_0) \to \pi_1(Y; y_0),$$
defined by setting $\Phi([\alpha]) = [  \overline{\gamma}\cdot \alpha\cdot \gamma ]$  for each $\alpha\colon I_M \to Y$ with $\alpha(0) = y'_0= \alpha(M)$.
\end{theorem}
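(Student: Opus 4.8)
The plan is to carry over the classical change-of-basepoint argument essentially verbatim; the only real work is to check that $\Phi$ is well-defined on subdivision-based homotopy classes, after which the homomorphism property and the existence of an inverse are purely formal. It is cleanest to organize everything around a ``digital fundamental groupoid,'' so my first step would be to upgrade the basic facts about concatenation from based loops to arbitrary paths (with matching endpoints). Concretely: \textbf{(i)} concatenation descends to a well-defined operation on subdivision-based homotopy classes of paths, by the same interplay of parts (b) and (c) of \lemref{lem: loop product basics} with the absorption identities $[C_j\cdot\delta]=[\delta]=[\delta\cdot C_j]$ established inside the proof of \propref{prop: product on classes}, now applied to paths rather than loops; \textbf{(ii)} it is strictly associative (already noted for loops, and the same check works for paths); \textbf{(iii)} for each point $y$ the class $[C^{y}_j]$ of a constant path is independent of $j$ and acts as a two-sided identity for composition at $y$; and \textbf{(iv)} by \lemref{lem: inverses} we have $[\gamma\cdot\overline{\gamma}]=[C^{\gamma(0)}]$ and $[\overline{\gamma}\cdot\gamma]=[C^{\gamma(M)}]$, so $[\overline{\gamma}]$ is a two-sided inverse of $[\gamma]$ in the groupoid. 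In particular I would record the consequence that inserting or deleting a ``$\gamma\cdot\overline{\gamma}$'' (resp.\ ``$\overline{\gamma}\cdot\gamma$'') segment in the interior of a concatenation does not change the subdivision-based homotopy class; for this I also need the evident path-level analogue of part (a) of \lemref{lem: loop product basics}, proved word for word with ``based loop'' replaced by ``relative the endpoints.''

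With this groupoid structure in hand, $\Phi$ is simply conjugation by the class $[\gamma]$: for a based loop $\alpha\colon I_M\to Y$ at $y'_0$, the concatenation $\overline{\gamma}\cdot\alpha\cdot\gamma$ is a based loop at $y_0$, and $\Phi([\alpha])=[\overline{\gamma}]\cdot[\alpha]\cdot[\gamma]$ (groupoid composition $y_0\to y'_0\to y'_0\to y_0$) depends only on $[\alpha]$ by \textbf{(i)}. It is a homomorphism because, using strict associativity and \textbf{(iv)}, $\Phi([\alpha])\cdot\Phi([\beta])=[\overline{\gamma}\cdot\alpha\cdot(\gamma\cdot\overline{\gamma})\cdot\beta\cdot\gamma]=[\overline{\gamma}\cdot\alpha\cdot\beta\cdot\gamma]=\Phi([\alpha\cdot\beta])=\Phi([\alpha]\cdot[\beta])$, where the middle equality replaces the interior loop $\gamma\cdot\overline{\gamma}$ (based at $y'_0$) by its class $[C^{y'_0}]$, an identity for composition at $y'_0$.

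Finally, for the inverse, let $\Psi\colon\pi_1(Y;y_0)\to\pi_1(Y;y'_0)$ be the map of the same shape built from the reverse path $\overline{\gamma}$ (which runs from $y_0$ to $y'_0$), namely $\Psi([\delta])=[\gamma\cdot\delta\cdot\overline{\gamma}]$; it is well-defined and a homomorphism by the identical arguments. Then the groupoid identities \textbf{(iv)} give $\Psi(\Phi([\alpha]))=[(\gamma\cdot\overline{\gamma})\cdot\alpha\cdot(\gamma\cdot\overline{\gamma})]=[\alpha]$ by cancelling the two ``$\gamma\cdot\overline{\gamma}$'' segments, and likewise $\Phi(\Psi([\delta]))=[(\overline{\gamma}\cdot\gamma)\cdot\delta\cdot(\overline{\gamma}\cdot\gamma)]=[\delta]$. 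Hence $\Phi$ is a bijective homomorphism, i.e.\ an isomorphism.

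The main obstacle is step \textbf{(i)}, the descent of concatenation to classes. In the digital setting a fixed path $\gamma$ and loops of varying lengths do not subdivide compatibly: if $\alpha\colon I_M\to Y$ and $\alpha'\colon I_{M'}\to Y$ with $M\neq M'$, one checks that the lengths of $\overline{\gamma}\cdot\alpha\cdot\gamma$ and $\overline{\gamma}\cdot\alpha'\cdot\gamma$ cannot be matched by subdivision alone, so from a based homotopy of based loops $\alpha\circ\rho_k\approx\alpha'\circ\rho_l$ one cannot directly build a based homotopy of the sandwiched loops. The fix is the same one used throughout \propref{prop: product on classes}: pad the shorter side with constant paths via part (c) of \lemref{lem: loop product basics}, carefully interleaved with subdivision via part (b), and absorb the constants again; this is the only place requiring genuine care. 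Once facts \textbf{(i)}--\textbf{(iv)} — the ``digital fundamental groupoid'' — are available, the proof is formal and identical to the topological one.
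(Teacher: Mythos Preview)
Your proposal is correct and follows essentially the same route as the paper: extend parts (a), (b), (c) of \lemref{lem: loop product basics} and the absorption identities from \propref{prop: product on classes} from based loops to paths (relative endpoints), then handle the mismatch in subdivision levels by padding with constants via part (c) and reabsorbing them. Your groupoid framing is a clean reorganization, but the paper carries out the identical technical steps inline for the specific sandwich $\overline{\gamma}\cdot\alpha\cdot\gamma$ rather than stating the general descent of concatenation to path classes; in particular the paper explicitly notes, exactly as you anticipate, that the arguments for (a), (b), (c) go through verbatim for paths and that the only genuine work is equalizing $m$ and $l$ when $[\alpha]=[\beta]$ comes from $\alpha\circ\rho_m\approx\beta\circ\rho_l$.
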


\begin{proof}
We check that $\Phi$ is well-defined.  The ingredients we need for this are implicit in parts of \lemref{lem: loop product basics}, \propref{prop: product on classes} and \lemref{lem: inverses}, and the first part of the proof of \thmref{thm: pi1}.  We make them explicit here.

Suppose that we are given based loops $\alpha\colon I_M \to Y$ and $\beta\colon I_L \to Y$, both based at $y'_0$ and with
$[\alpha] = [\beta] \in \pi_1(Y; y'_0)$.  That is, we have $\alpha\circ \rho_m \approx \beta\circ \rho_l \colon I_{mM+m-1} \to Y$ for some  $m$ and $l$ with $mM+m-1 = lL+l-1$.  Then
for the loop $\overline{\gamma}\cdot \alpha \cdot \gamma$ based at $y_0$, we have
$$[\overline{\gamma}\cdot \alpha \cdot \gamma] = [(\overline{\gamma}\cdot \alpha \cdot \gamma)\circ\rho_m]
= [(\overline{\gamma}\circ\rho_m)\cdot (\alpha\circ\rho_m) \cdot (\gamma\circ\rho_m)],$$
with the second equality following from part (b) of \lemref{lem: loop product basics}.  Actually, that result is phrased for concatenations of based loops, but the argument given clearly applies just as well to concatenations of paths (in all situations in which concatenation is suitable).   We wish to replace $(\alpha\circ\rho_m)$ with $(\beta\circ \rho_l)$ in the the middle of this concatenation.
This is a variation on part (a) of \lemref{lem: loop product basics}, with the difference being that here we are concatenating paths and loops, not just loops.
Suppose that $H\colon I_{mM+m-1}\times I_T \to Y$ is a based homotopy of based loops $\alpha\circ \rho_m \approx \beta\circ \rho_l$.  Then define $H_{-}\colon I_{mN+m-1} \times I_T \to Y$ and $H_{+}\colon I_{mN+m-1} \times I_T \to Y$ as
$$H_{-}(s, t) =  (\overline{\gamma}\circ\rho_m)(s) \quad \text{ and } \quad H_{+}(s, t) =  (\gamma\circ\rho_m)(s)$$
for all $t \in I_T$.   Then $H$, $H_{-}$ and $H_{+}$ assemble together to give a based homotopy of based loops
$$\mathcal{H} \colon I_{ m(2N+M) + 3m-1} \times I_T \to Y,$$
from $(\overline{\gamma}\circ\rho_m)\cdot (\alpha\circ\rho_m) \cdot (\gamma\circ\rho_m)$ to $(\overline{\gamma}\circ\rho_m)\cdot (\beta\circ\rho_l) \cdot (\gamma\circ\rho_m)$. Explicitly, we have
$$\mathcal{H}(s, t) = \begin{cases}
H_{-}(s, t) & 0 \leq s \leq mN+m-1\\
H\circ T_1(s, t) & mN+m \leq s \leq m(N+M) + 2m -1 \\
H_{+}\circ T_2(s, t) & m(N+M) + 2m \leq s \leq m(2N+M) + 3m-1,
\end{cases}
$$
 with $T_1$ and $T_2$ the evidently continuous translations $T_1(s, t) = (s - mN+m, t)$ and $T_2(s, t) = (s - (m(N+M) + 2m), t)$.
 Individually, $H_{-}$ is continuous on $[0, mN+m-1] \times I_T$ and $H\circ T_1$ is continuous on $[mN+m, m(N+M) + 2m-1] \times I_T$.  But we have
 $$\mathcal{H}(mN+m-1, t) = (\overline{\gamma}\circ\rho_m)(mN+m-1) = \overline{\gamma}(N) = y'_0,$$
and
 $$\mathcal{H}(mN+m, t) = H(0, t)   = y'_0,$$
for each $t \in I_T$.  Thus, for any two adjacent points $(s, t)$ and $(s', t')$ in $[0, m(N+N) + 2m-1]\times I_T$ that do not lie either both in $[0, mN+m-1] \times I_T$ or both in $[mN+m, m(N+M) + 2m-1] \times I_T$, we have $\mathcal{H}(s, t) = \mathcal{H}(s', t') = y'_0$.  Hence, $\mathcal{H}$ is continuous over  $[0, m(N+M) + 2m-1]\times I_T$.  A similar discussion applied to $H$ and $H_{+}$ confirms that, indeed, $\mathcal{H}$ is continuous over  $[0, m(2N+M) + 3m-1]\times I_T$.   Furthermore, we have
 $$\mathcal{H}(0, t) = (\overline{\gamma}\circ\rho_m)(0) = \overline{\gamma}(0) = y_0,$$
and
 $$\mathcal{H}(m(2N+M) + 3m-1, t) = H_{+}(mN+m-1, t) = (\gamma\circ\rho_m)(mN+m-1) = \gamma(N)  = y_0,$$
for each $t \in I_T$.  So $\mathcal{H}$ is a based homotopy of based loops
$$(\overline{\gamma}\circ\rho_m)\cdot (\alpha\circ\rho_m) \cdot (\gamma\circ\rho_m) \approx (\overline{\gamma}\circ\rho_m)\cdot (\beta\circ \rho_l) \cdot (\gamma\circ\rho_m).$$
Continuing with our calculation from above, then, we may now write
\begin{equation}\label{eq: alpha gamma beta}
[\overline{\gamma}\cdot \alpha \cdot \gamma] =  [(\overline{\gamma}\circ\rho_m)\cdot (\beta\circ \rho_l) \cdot (\gamma\circ\rho_m)].
\end{equation}
If $m=l$, then a further application of part (b) of \lemref{lem: loop product basics} (for concatenations of paths) yields
$$[\overline{\gamma}\cdot \alpha \cdot \gamma] =  [(\overline{\gamma}\cdot \beta \cdot \gamma)\circ\rho_m] =  [\overline{\gamma}\cdot \beta \cdot \gamma],$$
which is to say that we have  $\Phi([\alpha]) = \Phi([\beta])$.

So suppose, instead, that we have $l = m+r$, with $r \geq 1$.  Write $C_N \colon I_N \to Y$ for the constant loop of length $N$ at $y_0 \in Y$.  Also, write $(C_N)^{\cdot r}$ for  the $r$-fold concatenation of this constant  loop with itself.  We need a variant of part (c) of \lemref{lem: loop product basics} for paths.  Namely, we want a homotopy relative the endpoints
$$\gamma\circ\rho_{k} \approx (\gamma\circ\rho_{k-1})\cdot C_N  \colon I_{kN + k-1} \to Y$$
for each $k \geq 1$.  But, once again, the same argument used for  part (c) of \lemref{lem: loop product basics} applies just as well to paths: the fact that $\alpha$ in that proof is a loop rather than a path plays no role in the argument.  So the homotopy used there, namely,
$$H(s, t) = \begin{cases} \gamma\circ\rho_{k-1}(s)& 0 \leq s \leq t(k-1) - 1\\
\gamma\circ\rho_{k}(s+t)& t(k-1) \leq s \leq kN + k-1 - t\\
y_0 & kN + k - t \leq s \leq kN + k -1
\end{cases}
$$
is a homotopy $H \colon I_{kN + k-1} \times I_{N+1} \to Y$ from $\gamma\circ\rho_{k}$ to $(\gamma\circ\rho_{k-1})\cdot C_N$ that is relative the endpoints, because we have
$$H(0, t) = \begin{cases} \gamma\circ\rho_{k}(0) = \gamma(0) = y'_0 & t=0\\
\gamma\circ\rho_{k-1}(0) = \gamma(0) = y'_0 & 1 \leq t \leq N+1
\end{cases}
$$
and
$$H(kN + k-1, t) = \begin{cases} \gamma\circ\rho_{k}(kN + k-1) = \gamma(N) = y_0 & t=0\\
y_0& 1 \leq t \leq N+1.
\end{cases}
$$
This homotopy is continuous for the same reasons as in the proof of  part (c) of \lemref{lem: loop product basics}---again, the fact that $\alpha$ is a loop there plays no role in the continuity part of the argument.  By repeatedly using this variant of part (c) of \lemref{lem: loop product basics}, we obtain a homotopy relative the endpoints
$$H\colon I_{(m+r)N+m+r-1} \times I_{N+1} \to Y$$
from $\gamma\circ \rho_{m+r}$ to    $(\gamma\circ \rho_{m}) \cdot (C_N)^{\cdot r}$.  A similar discussion, using a variant of part (c) of \lemref{lem: loop product basics} for a homotopy  $\gamma\circ\rho_{k}$ to $C_N \cdot (\overline{\gamma}\circ\rho_{k-1})$ leads to a homotopy relative the endpoints
$$H'\colon I_{(m+r)N+m+r-1} \times I_{N+1} \to Y$$
from $\overline{\gamma}\circ \rho_{m+r}$ to  $(C_N)^{\cdot r}  \cdot (\overline{\gamma}\circ \rho_{m})$.  Now, because the homotopies $H$ and $H'$ are relative their endpoints, we may assemble them together to give a based homotopy of based loops
$$(\overline{\gamma}\circ \rho_{m+r}) \cdot  (\beta\circ \rho_l) \cdot  (\gamma\circ\rho_{m+r})
\approx
(C_N)^{\cdot r}  \cdot (\overline{\gamma}\circ \rho_{m}) \cdot  (\beta\circ \rho_l) \cdot  (\gamma\circ\rho_m) \cdot (C_N)^{\cdot r}.$$
In fact, we may use $G\colon I_{2(m+r)N+2m+2r + lL+l -1}\times I_{N+1} \to Y$ defined by
$$G(s, t) = \begin{cases}
H'(s, t) & 0 \leq s \leq (m+r)(N+1)-1\\
\\
& (m+r)(N+1)\\
(\beta\circ \rho_l)(s-((m+r)(N+1))) & \hbox{\hskip0.3truein} \leq s \leq \\
&\hbox{\hskip.6truein}  (m+r)(N+1) + lL+l-1 \\
\\
 & (m+r)(N+1) + lL+l-1\\
H\circ T(s, t) & \hbox{\hskip0.3truein} \leq s \leq \\
& \hbox{\hskip0.6truein}(m+r)(2N+2) + lL+l -1,
\end{cases}
$$
 with $T$ the evidently continuous translation $T(s, t) = (s - ((m+r)N+m+r + lL+l-1), t)$.  A direct check confirms that this satisfies $G(s, 0) = (\overline{\gamma}\circ \rho_{m+r}) \cdot  (\beta\circ \rho_l) \cdot  (\gamma\circ\rho_{m+r}) (s)$ and $G(s, N+1) = (C_N)^{\cdot r}  \cdot (\overline{\gamma}\circ \rho_{m}) \cdot  (\beta\circ \rho_l) \cdot  (\gamma\circ\rho_m) \cdot (C_N)^{\cdot r}(s)$.  Also, we have $G(0, t) = G(2(m+r)N+2m+2r + lL+l -1, t) = y_0$.  Continuity of $G$  follows because the defining formulas are continuous on their domains, and where their domains abut, namely at $\{  mN+m-1, mN+m\}\times I_{N+1}$ and at   $\{  (m+r)N+m+r + lL+l-1, (m+r)N+m+r + lL+l\}\times I_{N+1}$, $G$ takes the single value $y'_0$.
Any pair of adjacent points in the domain of $G$ is either in one of the separate domains of the defining formulas or, if not, in one of these two vertical strips on which $G$ takes a constant value. Hence we have $G(s, t) \sim_Y G(s', t')$, and $G$ is continuous.

So, return to \eqref{eq: alpha gamma beta}, and recall that we are considering the case in which we have $l = m+r$ with $r \geq 1$.  Then after the above discussion, we may continue with \eqref{eq: alpha gamma beta} and write
$$
\begin{aligned}
{[\overline{\gamma}\cdot \alpha \cdot \gamma] }&=  [(\overline{\gamma}\circ\rho_m)\cdot (\beta\circ \rho_{m+r}) \cdot (\gamma\circ\rho_m)]\\
&=  [(C_N)^{\cdot r}  \cdot(\overline{\gamma}\circ\rho_m)\cdot (\beta\circ \rho_{m+r}) \cdot (\gamma\circ\rho_m)\cdot (C_N)^{\cdot r}]\\
&=  [(\overline{\gamma}\circ\rho_{m+r})\cdot (\beta\circ \rho_{m+r}) \cdot (\gamma\circ\rho_{m+r})].
\end{aligned}
$$
The second equality here is a direct application of  the facts established at the start of the proof of \propref{prop: product on classes}:
For any based loop $\alpha\colon I_M \to Y$, and any $N \geq 0$, we have $[C_N\cdot\alpha] = [\alpha] = [\alpha\cdot C_N]$.  Notice that, for this step, we do actually have concatenations of based loops.  Then the third equality above follows from the discussion above.  But now, just as we did for the case in which $l = m$ above, we may make
a further application of part (b) of \lemref{lem: loop product basics} (for concatenations of paths) to obtain
$$[(\overline{\gamma}\circ\rho_{m+r})\cdot (\beta\circ \rho_{m+r}) \cdot (\gamma\circ\rho_{m+r})] =  [(\overline{\gamma}\cdot \beta \cdot \gamma)\circ\rho_{m+r}] =  [\overline{\gamma}\cdot \beta \cdot \gamma],$$
and in this case also we have  $\Phi([\alpha]) = \Phi([\beta])$.

Finally, referring to \eqref{eq: alpha gamma beta}, suppose that we have $m = l+r$, with $r \geq 1$.   From the previous case, we have homotopies relative the endpoints
$\gamma\circ \rho_{l+r}\approx(\gamma\circ \rho_{l}) \cdot (C_N)^{\cdot r}$ and $\overline{\gamma}\circ \rho_{l+r}$ to  $(C_N)^{\cdot r}  \cdot (\overline{\gamma}\circ \rho_{l})$.  Just as in the previous case, we incorporate these into a based homotopy of based loops
$$(\overline{\gamma}\circ \rho_{l+r}) \cdot  (\beta\circ \rho_l) \cdot  (\gamma\circ\rho_{l+r})
\approx
(C_N)^{\cdot r}  \cdot (\overline{\gamma}\circ \rho_{l}) \cdot  (\beta\circ \rho_l) \cdot  (\gamma\circ\rho_l) \cdot (C_N)^{\cdot r}.$$
Then we have $\Phi([\alpha]) = \Phi([\beta])$ here also, again just as in the previous case.

So we have shown that $\Phi$ is well-defined.  That $\Phi$ is a homomorphism follows easily.  As in the first part of the proof of \thmref{thm: pi1}, with $\mathbf{e} = [C_{2N+1}] \in \pi_1(Y; y'_0)$, we may write $[\alpha]\cdot[\beta] =  [\alpha]\cdot \mathbf{e} \cdot [\beta] =  [\alpha]\cdot [\gamma\cdot \overline{\gamma}] \cdot [\beta] = [\alpha\cdot (\gamma\cdot \overline{\gamma}) \cdot \beta ] \in  \pi_1(Y; y'_0)$, with the last identities following from  \lemref{lem: inverses} and \propref{prop: product on classes} (directly, for based loops).  Thus we have
$$
\begin{aligned}
\Phi([\alpha]\cdot[\beta]) & = \Phi( [\alpha\cdot (\gamma\cdot \overline{\gamma}) \cdot \beta ])\\
&= [\overline{\gamma} \cdot \alpha\cdot \gamma\cdot \overline{\gamma} \cdot \beta \cdot \gamma]\\
&= [(\overline{\gamma} \cdot \alpha\cdot \gamma)\cdot (\overline{\gamma} \cdot \beta \cdot \gamma)]\\
&= [(\overline{\gamma} \cdot \alpha\cdot \gamma)]\cdot [(\overline{\gamma} \cdot \beta \cdot \gamma)]\\
&= \Phi([\alpha])\cdot \Phi([\beta]),\end{aligned}
$$
with all identities after the first being identities of equivalence classes of based loops in $\pi_1(Y; y_0)$.

Since $\gamma$ was an arbitrary path in $Y$ from $y'_0$ to $y_0$, we may replace it with the reverse path $\overline{\gamma}$ from $y_0$ to $y'_0$ and obtain a homomorphism
$$\Phi' \colon  \pi_1(Y; y_0) \to \pi_1(Y; y'_0),$$
defined by setting $\Phi'([\alpha]) = [  \gamma \cdot \alpha\cdot \overline{\gamma} ]$.  For each $[\alpha] \in  \pi_1(Y; y'_0)$, we have
$$
\begin{aligned}
\Phi'\circ \Phi ([\alpha]) & = [  \gamma \cdot (\overline{\gamma} \cdot \alpha \cdot  \gamma)\cdot \overline{\gamma}]\\
& = [ ( \gamma \cdot \overline{\gamma} )\cdot \alpha \cdot  (\gamma \cdot \overline{\gamma})] = [  \gamma \cdot \overline{\gamma}]\cdot [\alpha] \cdot [\gamma \cdot \overline{\gamma} ] \\
& = \mathbf{e}\cdot [\alpha] \cdot \mathbf{e} = [\alpha],
\end{aligned}
$$
again from  \lemref{lem: inverses} and \propref{prop: product on classes} directly.  Likewise, we have  $\Phi\circ \Phi' = \mathrm{id} \colon  \pi_1(Y; y_0) \to \pi_1(Y; y_0)$, and so $\Phi$ and $\Phi'$ are inverse homomorphisms.  It follows that each is an isomorphism.
\end{proof}

 Another standard result from the topological setting that translates well into this digital setting concerns products.  See \lemref{lem: product} for some of the notation and terminology used here.

 \begin{theorem}\label{thm: products}
Let $X$ and $Y$ be any digital images.  Let $p_1\colon X \times Y \to X$ and $p_2\colon X \times Y \to Y$ denote the projections onto either factor.  Define a map
$$\Psi \colon  \pi_1\big(X \times Y; (x_0, y_0)\big) \to \pi_1(X; x_0)\times \pi_1(Y; y_0),$$
by setting $\Psi([\alpha]) = \big( (p_1)_*([\alpha]), (p_2)_*([\alpha])\big)$ for each $[\alpha] \in  \pi_1\big(X \times Y; (x_0, y_0)\big)$.
Then $\Psi$ is an isomorphism.
\end{theorem}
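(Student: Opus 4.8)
The plan is to mimic the classical proof that $\pi_1$ of a product is the product of the $\pi_1$'s; the single structural input is \lemref{lem: product}, by which a function $h$ into $X\times Y$ is continuous exactly when $p_1\circ h$ and $p_2\circ h$ both are, and a based function has based coordinate functions. First I would observe that $\Psi$ is a homomorphism: by \lemref{lem: induced hom} each $(p_i)_*$ is one, and a map into a direct product whose two coordinates are homomorphisms is a homomorphism. The remaining content is bijectivity.

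For surjectivity I would start with a pair $\big([\beta],[\gamma]\big)$, choosing based-loop representatives $\beta\colon I_M\to X$ and $\gamma\colon I_N\to Y$. The obstruction, absent in the topological setting, is that $M\ne N$ in general, so I would first reparametrize to a common length: $\beta\circ\rho_{N+1}$ and $\gamma\circ\rho_{M+1}$ both have domain $I_L$ with $L=(M+1)(N+1)-1$. Then $\alpha:=\big(\beta\circ\rho_{N+1},\,\gamma\circ\rho_{M+1}\big)\colon I_L\to X\times Y$ is continuous by \lemref{lem: product}, based, and a loop, with $p_1\circ\alpha=\beta\circ\rho_{N+1}$ and $p_2\circ\alpha=\gamma\circ\rho_{M+1}$; hence $(p_1)_*([\alpha])=[\beta\circ\rho_{N+1}]=[\beta]$ and likewise $(p_2)_*([\alpha])=[\gamma]$, so $\Psi([\alpha])=\big([\beta],[\gamma]\big)$.

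For injectivity it is enough, $\Psi$ being a homomorphism, to show $\ker\Psi$ is trivial, so I would take a based loop $\alpha\colon I_M\to X\times Y$ with $(p_i)_*([\alpha])=\mathbf e$ for $i=1,2$. Unwinding \defref{def: subdn homotopic loops}, for each $i$ there is $k_i$ and a based homotopy of based loops from $(p_i\circ\alpha)\circ\rho_{k_i}$ to the constant loop; using \lemref{lem: composition based homotopy loops} (precompose such a homotopy with a standard projection, noting that a constant loop stays constant after such precomposition) I would pass to a common value $k$, say $k=k_1k_2$, obtaining based homotopies of based loops $H_1\colon I_{kM+k-1}\times I_{R_1}\to X$ and $H_2\colon I_{kM+k-1}\times I_{R_2}\to Y$ from $(p_1\circ\alpha)\circ\rho_k$ and $(p_2\circ\alpha)\circ\rho_k$ to constant loops. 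After padding the shorter of $H_1,H_2$ in the time variable exactly as in the proof of \lemref{lem: loop product basics}(a), so both have time-interval $I_R$, the function $H:=(H_1,H_2)\colon I_{kM+k-1}\times I_R\to X\times Y$ is continuous by \lemref{lem: product}, is a based homotopy of based loops (the basepoint conditions hold coordinatewise), begins at $\alpha\circ\rho_k$, and ends at the constant loop; therefore $[\alpha]=[\alpha\circ\rho_k]=\mathbf e$.

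The main obstacle is not conceptual but organizational: unlike in topology, the loops $\beta,\gamma$ and the homotopies $H_1,H_2$ live over domains of independently-chosen lengths and with independently-chosen subdivision parameters, and these must be synchronized---by precomposition with standard projections and by the time-padding device of \lemref{lem: loop product basics}(a)---before the coordinatewise data can be assembled into a single map into $X\times Y$. Once synchronized, checking that the assembled map is continuous, based, and loop-preserving is routine, with \lemref{lem: product} supplying the continuity. I expect the book-keeping in the injectivity step, matching both the subdivision parameters $k_i$ and the homotopy lengths $R_i$, to be the fussiest point.
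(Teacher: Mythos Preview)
Your proof is correct and your injectivity argument is essentially identical to the paper's: synchronize the two subdivision parameters by passing to their product, pad the two homotopies to a common time-length as in \lemref{lem: loop product basics}(a), and then pair them coordinatewise using \lemref{lem: product}.

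Your surjectivity argument, however, differs from the paper's. Given $([\alpha],[\beta])$ with $\alpha\colon I_M\to X$ and $\beta\colon I_N\to Y$, the paper does \emph{not} synchronize domain lengths; instead it forms the two based loops $(\alpha,C_M)\colon I_M\to X\times Y$ and $(C_N,\beta)\colon I_N\to X\times Y$ and uses their product in $\pi_1(X\times Y)$, computing
\[
\Psi\big([(\alpha,C_M)]\cdot[(C_N,\beta)]\big)=([\alpha],\mathbf e)(\mathbf e,[\beta])=([\alpha],[\beta]).
\]
Your approach instead builds a single preimage loop $(\beta\circ\rho_{N+1},\gamma\circ\rho_{M+1})$ after reparametrizing both coordinate loops to the common length $(M+1)(N+1)-1$. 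Both are short; your route is slightly more self-contained in that it avoids invoking the group product and the identity $[C_M]=\mathbf e$, while the paper's route avoids the reparametrization step and exhibits the preimage in a form that mirrors the classical ``run one loop, then the other'' description.
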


\begin{proof}
First note that $\Psi$ is well-defined, and a homomorphism, because both $(p_1)_*$ and $(p_2)_*$ are (see \lemref{lem: induced hom} and the discussion above it).
Suppose we have $([\alpha], [\beta]) \in \pi_1(X; x_0)\times \pi_1(Y; y_0)$,  with $[\alpha]$ represented by a based loop $\alpha\colon I_M \to X$ and $[\beta]$ represented by a based loop $\beta\colon I_N \to Y$.  Then $(\alpha, C_M) \colon I_M \to X \times Y$ and $(C_N, \beta) \colon I_N \to X \times Y$ represent elements of $\pi_1\big(X \times Y; (x_0, y_0)\big)$.  We have
$$\Psi \big( [(\alpha, C_M)] \cdot [(C_N, \beta)]\big) = \Psi \big( [(\alpha, C_M)] \big) \Psi \big( [(C_N, \beta)]\big) = ([\alpha], \mathbf{e}) (\mathbf{e}, [\beta]) =
([\alpha], [\beta]).$$
Hence $\Psi$ is onto.

Suppose we have $[\alpha] \in \pi_1\big(X \times Y; (x_0, y_0)\big)$ represented by a based loop $\alpha \colon I_M \to X \times Y$, such that $\Psi([\alpha]) = (\mathbf{e}, \mathbf{e}) \in \pi_1(X; x_0)\times \pi_1(Y; y_0)$.  Then $p_1\circ\alpha$ and $p_2\circ\alpha$ are subdivision-based homotopic, as based loops, to constant loops.  Suppose we have
$p_1\circ\alpha\circ \rho_k \approx C_{kM+k-1}$ via a homotopy of based loops $H \colon I_{kM+k-1} \times I_T \to X$  and $p_2\circ\alpha\circ\rho_l \approx C_{lM+l-1}$ via a homotopy of based loops $G \colon I_{lM+l-1} \times I_S \to Y$.  Then $H\circ (\rho_l \times \mathrm{id}_{I_T}) \colon I_{klM+kl-1} \times I_T \to X$ is a based homotopy of based loops $p_1\circ\alpha\circ \rho_{kl} \approx C_{kM+k-1}\circ \rho_l = C_{klM+kl-1}$ and $G\circ (\rho_k \times \mathrm{id}_{I_S}) \colon I_{klM+kl-1} \times I_S \to Y$ is a based homotopy of based loops $p_2\circ\alpha\circ \rho_{kl} \approx C_{lM+l-1}\circ \rho_k = C_{klM+kl-1}$.  If $S \not= T$, then we may lengthen the shorter of these two homotopies, exactly as we did at the start of our proof of part (a) of \lemref{lem: loop product basics}, so that they are of equal length.  So assume, without loss of generality,  that we have $S=T$.
Then we have a based homotopy of based loops
$$\big( H\circ (\rho_l \times \mathrm{id}_{I_T}),  G\circ (\rho_k \times \mathrm{id}_{I_T}) \big) \colon  I_{klM+kl-1} \times I_T \to X \times Y$$
from $\big( p_1\circ\alpha\circ \rho_{kl}, p_2\circ\alpha\circ \rho_{kl}\big)$ to $\big( C_{klM+kl-1}, C_{klM+kl-1}\big)$.  Because we may write
$$\alpha\circ \rho_{kl} = \big( p_1\circ\alpha\circ \rho_{kl}, p_2\circ\alpha\circ \rho_{kl}\big) \colon I_{klM+kl-1} \to X \times Y$$
and
$$C_{klM+kl-1} = \big( C_{klM+kl-1}, C_{klM+kl-1}\big) \colon I_{klM+kl-1} \to X \times Y,$$
it follows that we have $[\alpha] = \mathbf{e} \in \pi_1\big(X \times Y; (x_0, y_0)\big)$.  Hence $\Psi$ is injective.  The result follows.
\end{proof}

Thus far, we have succeeded in  implementing the standard development of ideas but from a point of view that emphasizes the use of subdivision.
With our last few results of this section, we illustrate the advantage of building subdivision into our constructions.  In these results, we go significantly beyond what has been shown for other versions of the fundamental group already in the digital literature.

\begin{theorem}\label{thm: rho induces iso on pi1}
Each standard projection $\rho_k \colon S(X, k) \to X$ induces an isomorphism of fundamental groups.
\end{theorem}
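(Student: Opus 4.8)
The plan is to show that the induced map $(\rho_k)_*\colon \pi_1\big(S(X,k);\overline{x_0}\big)\to\pi_1(X;x_0)$ — a group homomorphism by \lemref{lem: induced hom}, since $\rho_k$ is based — is both surjective and injective. Both directions should rest on the same idea: \emph{lifting} loops, and then homotopies, from $X$ up to $S(X,k)$ along $\rho_k$. The machinery for this is the subdivision of maps and of homotopies developed in \cite{LOS19b} and recalled in \appref{Appx: technical}, together with the technical result of \appref{sec: technical}, whose role is to guarantee that a lift, when composed back down with $\rho_k$, recovers an \emph{evenly distributed} reparametrization of the original (a composite with a standard projection), rather than merely some trivial extension, so that subdivision-based homotopy classes are not disturbed. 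I note in passing that, since $\rho_{kl}=\rho_k\circ\rho_l$ under the identification $S\big(S(X,k),l\big)\cong S(X,kl)$, functoriality (\lemref{lem: induced hom}) reduces the statement to prime $k$, though this is not essential.

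For surjectivity, I would start from a based loop $\alpha\colon I_M\to X$ representing a given class, and apply the subdivision-of-maps theorem of \cite{LOS19b} to $\alpha$, obtaining for a suitable $N$ a based loop $\widetilde\alpha\colon S(I_M,Nk)\to S(X,k)$ with $\rho_k\circ\widetilde\alpha=\alpha\circ\rho_{Nk}$. Then $(\rho_k)_*([\widetilde\alpha])=[\rho_k\circ\widetilde\alpha]=[\alpha\circ\rho_{Nk}]=[\alpha]$, so $(\rho_k)_*$ is onto.

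For injectivity, I would take a based loop $\gamma\colon I_M\to S(X,k)$ with $(\rho_k)_*([\gamma])=\mathbf{e}$, so that $\rho_k\circ\gamma$ is subdivision-based null-homotopic in $X$: for some $l$ there is a based homotopy of based loops $H\colon I_{lM+l-1}\times I_T\to X$ from $(\rho_k\circ\gamma)\circ\rho_l$ to the constant loop at $x_0$. Applying the subdivision-of-homotopies theorem to $H$ produces a homotopy $\widetilde H$ into $S(X,k)$ whose bottom edge, by the technical result of \appref{sec: technical}, is a reparametrization $\gamma\circ\rho_j$ of $\gamma$, and whose top edge $\delta$ maps into $\rho_k^{-1}(x_0)=S(x_0,k)$. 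Since $S(x_0,k)\cong(I_{k-1})^n$ is an $n$-cube, it is based-contractible (\exref{ex: contractible cube}), so $\delta$ represents $\mathbf{e}$ in $\pi_1\big(S(x_0,k);\overline{x_0}\big)$, hence also in $\pi_1\big(S(X,k);\overline{x_0}\big)$. The lifted homotopy $\widetilde H$ need not be based, but its basepoint track $t\mapsto\widetilde H(0,t)$ is a loop in the contractible $S(x_0,k)$ and so is null-homotopic, which lets us rectify $\widetilde H$ to a based homotopy of based loops; therefore $[\gamma]=[\gamma\circ\rho_j]=[\delta]=\mathbf{e}$, and $(\rho_k)_*$ is injective.

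Setting aside the routine bookkeeping, I expect the genuine obstacle to be the two lifting steps, and specifically controlling the behaviour of a lift under $\rho_k$: producing \emph{some} lift of a map or homotopy along $\rho_k$ is straightforward, but arranging that composing it back with $\rho_k$ returns exactly the standard reparametrization — so that equivalence classes of loops are preserved — and keeping basepoints under control (making lifts based, and handling basepoint tracks that are nontrivial but null-homotopic loops in $S(x_0,k)$) is the delicate part. This is precisely what is isolated and proved in \appref{sec: technical}.
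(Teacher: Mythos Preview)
Your overall strategy---lift loops for surjectivity, lift null-homotopies for injectivity, and use the technical result of \appref{sec: technical} to identify the bottom edge of a lift with a standard reparametrization of the original loop---is exactly the paper's approach. But there is one genuine gap: you treat all $k$ uniformly, whereas the lifting results from \cite{LOS19b} (stated here as \thmref{thm: path odd subdivision map} and \thmref{thm: 2-D subdivision map rectangle}) apply only to \emph{odd} subdivisions $2k+1$. Your remark about reducing to prime $k$ does not help, since $k=2$ is prime. The paper handles this by a separate reduction step using the partial projections $\rho^c_k\colon S(X,k)\to S(X,k-1)$ of \defref{def: rho^c}: for surjectivity at even $2k$, one factors $\rho_{2k+1}=\rho_{2k}\circ\rho^c_{2k+1}$ and pushes a lift in $S(X,2k+1)$ down to $S(X,2k)$; for injectivity at even $2k$, one shows that injectivity of $(\rho_{2k})_*$ follows from injectivity of $(\rho_{6k-1})_*$ by lifting a loop in $S(X,2k)$ to $S(X,6k)$ via the odd-subdivision cover and then factoring through $S(X,6k-1)$. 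Without this odd/even bookkeeping your argument does not go through as written.

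A smaller point: your injectivity sketch anticipates a lift $\widetilde H$ that may fail to be based, with a top edge merely landing in $S(x_0,k)$, and proposes a rectification using contractibility of that cube. This is plausible but unnecessary: \thmref{thm: 2-D subdivision map rectangle} already produces a \emph{based} homotopy of based loops, and the standard cover of a constant loop is again constant at $\overline{x_0}$. So once you restrict to odd $k$, the lifted homotopy has constant top edge and based vertical edges, and no rectification step is needed. The role of \corref{cor: homotopy alpha-rho cover} is then precisely what you say: it identifies the bottom edge $\widehat{\rho_{2k+1}\circ\alpha}$ with $\alpha\circ\rho_{2k+1}$ up to based homotopy of based loops, closing the argument.
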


\begin{proof}
Suppose we have $[\alpha] \in \pi_1(X; x_0)$ represented by a based loop $\alpha\colon I_M \to X$.  First consider an odd subdivison $\rho_{2k+1} \colon S(X, 2k+1) \to X$.
From \thmref{thm: path odd subdivision map}, we have a based loop $\widehat{\alpha} \colon S(I_M, 2k+1) \to S(X, 2k+1)$ that satisfies $\rho_{2k+1}\circ \widehat{\alpha} = \alpha\circ \rho_{2k+1}$.   Then in $\pi_1(X; x_0)$, we have
$$[\alpha] = [\alpha\circ\rho_{2k+1}] =  [\rho_{2k+1}\circ \widehat{\alpha}] = (\rho_{2k+1})_*([\widehat{\alpha}]),$$
with $[\widehat{\alpha}] \in \pi_1( S(X, 2k+1) ; \overline{x_0})$. Hence the homomorphism $(\rho_{2k+1})_* \colon \pi_1( S(X, 2k+1) ; \overline{x_0}) \to \pi_1(X; x_0)$ is onto for an odd subdivision.  Now suppose we have an even subdivision $\rho_{2k} \colon S(X, 2k) \to X$.  Factor the standard projection $\rho_{2k+1} \colon S(X, 2k+1) \to X$ as
$$\rho_{2k+1} = \rho_{2k}\circ \rho^c_{2k+1} \colon S(X, 2k+1) \to S(X, 2k) \to X,$$
where $\rho^c_{2k+1} \colon S(X, 2k+1) \to S(X, 2k)$ denotes the ``partial projection" defined in \defref{def: rho^c}.  As discussed in \defref{def:subdn basepoints}, $\rho^c_{2k+1}$ is a based map.
Then we have the following commutative diagram:
$$\xymatrix{ S(I_M, 2k+1) \ar[r]^-{\widehat{\alpha}} \ar[dd]_{\rho_{2k+1}} & S(X, 2k+1) \ar[dd]^-{\rho_{2k+1}} \ar[rd]^{\rho^c_{2k+1}} \\
&  & S(X, 2k) \ar[ld]^{\rho_{2k}}\\
I_M \ar[r]_-{\alpha}   & X}$$
Here, the based loop $\widehat{\alpha}$ is given by \thmref{thm: path odd subdivision map}.  Then $\rho^c_{2k+1}\circ \widehat{\alpha}$ is a based loop in $S(X, 2k)$ and the class in
$\pi_1( S(X, 2k) ; \overline{x_0})$ that it represents satisfies
$$(\rho_{2k})_*([\rho^c_{2k+1}\circ \widehat{\alpha}]) =  (\rho_{2k}\circ \rho^c_{2k+1})_*([\widehat{\alpha}])  = (\rho_{2k+1})_*([\widehat{\alpha}])
= [\alpha\circ\rho_{2k+1}] = [\alpha].$$
For even subdivisions the homomorphism $(\rho_{2k})_* \colon \pi_1( S(X, 2k) ; \overline{x_0}) \to \pi_1(X; x_0)$ is also onto.  For any subdivision, then, we have shown that the standard projection $\rho_k \colon S(X, k) \to X$ induces a surjection of fundamental groups.

To show that $(\rho_k)_*$ is injective requires somewhat more argument, most of which we defer to \appref{sec: technical}.  First, notice that it is sufficient to show this for $k$ odd.  For suppose that we have $\alpha\colon I_M \to S(X, 2k)$, with $k \geq 1$,  representing $[\alpha] \in \pi_1( S(X, 2k); \overline{x_0})$.   The based loop
$$\widehat{\alpha} \colon S(I_M, 3) = I_{3M+2} \to S( S(X, 2k), 3) = S(X, 6k)$$
that covers $\alpha$ from \thmref{thm: path odd subdivision map} satisfies $\alpha\circ\rho_3 = \rho_3\circ \widehat{\alpha}  \colon I_{3M+2} \to S(X, 2k)$, and thus we have
$$[\alpha] = [\alpha\circ\rho_3] = [\rho_3\circ \widehat{\alpha} ] \in \pi_1( S(X, 2k) ; \overline{x_0}).$$
Now we may factor $\rho_3\colon S(X, 6k) \to S(X, 2k)$ as
$$\rho_3 = \rho\circ \rho^c_{6k} \colon S(X, 6k) \to S(X, 6k-1) \to S(X, 2k),$$
by repeatedly using \defref{def: rho^c}, where $\rho^c_{6k} \colon S(X, 6k) \to S(X, 6k-1)$ is one of the partial projections defined there, and
$$\rho= \rho^c_{2k+1}\circ\cdots \circ \rho^c_{6k-1}\colon  S(X, 6k-1) \to S(X, 6k-2) \to \cdots \to S(X, 2k)$$
is a composition of such. As discussed in \defref{def:subdn basepoints}, these partial projections, and hence $\rho$, are all based maps.  The constructions thus far are represented in the following commutative diagram:
$$\xymatrix{I_{3M+2} \ar[r]^-{\widehat{\alpha}} \ar[dd]_{\rho_3} & S(X, 6k) \ar[rd]^{\rho^c_{6k}} \ar[dd]^{\rho_3}\\
 & & S(X, 6k-1) \ar[ld]^{\rho}\\
 I_M \ar[r]_-{\alpha} & S(X, 2k) \ar[d]^{\rho_{2k}}\\
  & X.}$$
Continuing with the above identifications, we have
$$[\alpha] =  [\alpha\circ\rho_3] = [ \rho\circ\rho^c_{6k}\circ \widehat{\alpha}] = \rho_*( [\rho^c_{6k}\circ \widehat{\alpha}]) \in \pi_1( S(X, 2k) ; \overline{x_0}).$$
Now assume that $(\rho_{6k-1})_*\colon \pi_1( S(X, 6k-1) ; \overline{x_0}) \to \pi_1( X ; x_0)$ is injective, and that we have
$(\rho_{2k})_*([\alpha]) = \mathbf{e} \in \pi_1(X, x_0)$.
We may write
$$(\rho_{6k-1})_*( [\rho^c_{6k}\circ \widehat{\alpha}] ) = (\rho_{2k})_*\circ \rho_*( [\rho^c_{6k}\circ \widehat{\alpha}] ) = (\rho_{2k})_*( [\alpha] ) = \mathbf{e},$$
since $\rho_{6k-1} = \rho_{2k}\circ\rho\colon S(X, 6k-1) \to S(X, 2k) \to X$.  But with our assumption of injectivity of $(\rho_{6k-1})_*$, this gives
$[\rho^c_{6k}\circ \widehat{\alpha}]  = \mathbf{e} \in  \pi_1( S(X, 6k-1) ; \overline{x_0})$, whence we have $[\alpha] =  \rho_*( \mathbf{e}) \in \pi_1( S(X, 2k) ; \overline{x_0})$.
Thus, injectivity of $(\rho_{2k})_*$ follows from that of $(\rho_{6k-1})_*$. It suffices to show $(\rho_{2k+1})_* \colon \pi_1( S(X, 2k+1) ; \overline{x_0}) \to \pi_1(X; x_0)$ is injective for each odd $2k+1$.

So suppose that we have $x \in \pi_1( S(X, 2k+1); \overline{x_0})$ with $(\rho_{2k+1})_*(x) = \mathbf{e} \in \pi_1( X; x_0)$.  Then $x$ is  represented by a based loop $\alpha \colon I_M \to S(X, 2k+1)$.  Since $[\rho_{2k+1} \circ \alpha] = \mathbf{e}$, we have a based homotopy of based loops $\rho_{2k+1} \circ \alpha\circ \rho_{k'} \approx C_K \colon I_K \to X$, with $K = k'M + k'-1$, for some suitable $k'$.
Let $\widehat{\big(\rho_{2k+1} \circ \alpha\circ \rho_{k'}\big)}$ be the standard cover of $\rho_{2k+1} \circ \alpha\circ \rho_{k'}\colon I_K \to X$ (cf.~\thmref{thm: path odd subdivision map}).
Then \corref{cor: homotopy alpha-rho cover} (with $\alpha$ replaced by $\alpha\circ \rho_{k'}$) implies that in the following diagram, the top-left triangle commutes up to a based homotopy of based loops:
$$\xymatrix{ S(I_K, 2k+1) \ar[rr]^-{ \widehat{\big(\rho_{2k+1} \circ \alpha\circ \rho_{k'}\big)} }  \ar[d]_{\rho_{2k+1}} & &S(X, 2k+1) \ar[d]^{\rho_{2k+1}} \\
I_K \ar[rru]_{\alpha\circ \rho_{k'}} \ar[rr]_-{\rho_{2k+1} \circ \alpha\circ \rho_{k'}} & & X.}$$
Since $\rho_{2k+1} \circ \alpha\circ \rho_{k'} \approx C_K$, it follows from \thmref{thm: 2-D subdivision map rectangle} that
we have a based homotopy of based loops $\widehat{\big(\rho_{2k+1} \circ \alpha\circ \rho_{k'}\big)}  \approx C_{(2k+1)K+2k} \colon I_{(2k+1)K+2k} \to S(X, 2k+1)$.
Hence we may write
$$x = [\alpha] = [\alpha \circ \rho_{k'(2k+1)}] = [(\alpha \circ \rho_{k'})\circ \rho_{2k+1}] =  [\widehat{\big(\rho_{2k+1} \circ \alpha\circ \rho_{k'}\big)} ] = \mathbf{e}.$$
Thus  $(\rho_{2k+1})_* \colon \pi_1( S(X, 2k+1) ; \overline{x_0}) \to \pi_1(X; x_0)$ is injective.  As observed above, this is sufficient to conclude that every
standard projection $\rho_k$ induces an injection of fundamental groups.  The result follows.
\end{proof}

We postpone the proof of the technical result used to establish injectivity of $(\rho_{2k+1})_*$ in the above proof.  It appears as   \corref{cor: homotopy alpha-rho cover}
in Appendix~\ref{sec: technical}

\begin{corollary}
Each partial projection $\rho^c_k\colon S(X, k) \to S(X, k-1)$ (from \defref{def: rho^c}) induces an isomorphism of fundamental groups.
\end{corollary}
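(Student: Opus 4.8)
The plan is to deduce this formally from \thmref{thm: rho induces iso on pi1} together with the functoriality of $\pi_1$, using the fact that a standard projection factors through a partial projection. Concretely, recall from \defref{def: rho^c} that for each relevant index (that is, $k \geq 3$) one has the factorization
$$\rho_k = \rho_{k-1}\circ \rho^c_k \colon S(X, k) \longrightarrow S(X, k-1) \longrightarrow X,$$
and that, with the basepoint conventions fixed in \defref{def:subdn basepoints}, each of $\rho_k$, $\rho_{k-1}$, and $\rho^c_k$ is a \emph{based} map: indeed $\rho^c_k(\overline{x_0}) = \overline{x_0}$ and $\rho_k(\overline{x_0}) = x_0 = \rho_{k-1}(\overline{x_0})$. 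So the displayed identity is an identity of based maps of based digital images.

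First I would apply $\pi_1$ and invoke the functoriality statement of \lemref{lem: induced hom}, which gives
$$(\rho_k)_* = (\rho_{k-1})_*\circ (\rho^c_k)_* \colon \pi_1\big(S(X, k);\overline{x_0}\big) \longrightarrow \pi_1\big(S(X, k-1);\overline{x_0}\big) \longrightarrow \pi_1(X; x_0).$$
By \thmref{thm: rho induces iso on pi1}, both $(\rho_k)_*$ and $(\rho_{k-1})_*$ are isomorphisms; the latter is the standard projection from the $(k-1)$-subdivision, and in the boundary case $k=3$ it is $(\rho_2)_*$. If one also wishes to cover the degenerate index $k=2$, the notational convention $S(X,1)=X$ with $\rho_1=\mathrm{id}_X$ makes $(\rho_1)_*$ an isomorphism trivially. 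It then follows that
$$(\rho^c_k)_* = \big((\rho_{k-1})_*\big)^{-1}\circ (\rho_k)_*$$
is a composition of isomorphisms, hence an isomorphism, which is the assertion.

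There is no genuine obstacle here: the statement is a purely formal corollary of \thmref{thm: rho induces iso on pi1}. The one point deserving a moment's attention is that the triangle above must commute as a diagram of \emph{based} maps, so that $\pi_1$ may legitimately be applied to it; this is exactly what the basepoint computations of \defref{def:subdn basepoints} guarantee. Everything else is an instance of the standard principle that, in a composite $g\circ f$ of group homomorphisms, if $g\circ f$ and $g$ are both isomorphisms then so is $f$.
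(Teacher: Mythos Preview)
Your proof is correct and follows essentially the same route as the paper: factor $\rho_k = \rho_{k-1}\circ\rho^c_k$, apply functoriality to get $(\rho_k)_* = (\rho_{k-1})_*\circ(\rho^c_k)_*$, invoke \thmref{thm: rho induces iso on pi1} for both standard projections, and solve for $(\rho^c_k)_*$. The paper's version is terser and omits the explicit basepoint check and the boundary-index remarks, but the argument is the same.
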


\begin{proof}
We may factor the projection $\rho_k\colon S(X, k) \to X$ as
$$\rho_k = \rho_{k-1}\circ \rho^c_k\colon S(X, k) \to S(X, k-1) \to X,$$
with $\rho_{k-1}\colon S(X, k-1) \to X$ the standard projection and $\rho^c_k\colon S(X, k) \to S(X, k-1)$ the partial projection.  Then both $(\rho_{k})_*$ and
$(\rho_{k-1})_*$ are isomorphisms, from \thmref{thm: rho induces iso on pi1}, and hence $(\rho^c_{k})_* = \big((\rho_{k-1})_*\big)^{-1}\circ (\rho_{k})_*$ is an isomorphism.
\end{proof}

In \lemref{lem: induced hom}, we observed that based-homotopic maps induce the same homomorphism of fundamental groups.   A consequence of
 \thmref{thm: rho induces iso on pi1} is that maps that are subdivision-based homotopic also induce ``essentially" the same homomorphism of fundamental groups.
In the following result, the case in which $k = l = 1$ yields the statement that subdivision-based homotopic maps $f, g\colon X \to Y$ induce the same homomorphism of fundamental groups.

\begin{theorem}\label{thm: subdn htpic sam hom}
Suppose we have subdivision-based homotopic maps
$f \colon S(X, k) \to Y$ and $g \colon S(X, l) \to Y$.  If $k = l$, then we have
$$f_*  = g_* \colon \pi_1( S(X, k); \overline{x_0} ) \to \pi_1(Y; y_0).$$
If $k>l$, respectively $l > k$, then we have $f_*  = g_*\circ \rho_* \colon \pi_1( S(X, k); \overline{x_0} ) \to \pi_1(Y; y_0)$, respectively $f_*\circ \rho_*  = g_* \colon \pi_1( S(X, l); \overline{x_0} ) \to \pi_1(Y; y_0)$, with $\rho_*$ a canonical isomorphism induced by a partial projection $\rho\colon S(X, k) \to S(X, l)$, respectively $\rho\colon S(X, l) \to S(X, k)$.
\end{theorem}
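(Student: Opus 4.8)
The plan is to deduce the statement from three inputs: that based-homotopic maps induce equal homomorphisms on $\pi_1$ and that $(-)_*$ is functorial (\lemref{lem: induced hom}); that every standard projection, and every composite of partial projections, induces an isomorphism on $\pi_1$ (\thmref{thm: rho induces iso on pi1} and the corollary immediately following it); and the compatibility identities among the projections, namely $\rho_j\circ\rho_{j'} = \rho_{jj'}$ for standard projections under the standing identification $S\big(S(X,j),j'\big)\cong S(X,jj')$, and $\rho_{j-1}\circ\rho^c_j = \rho_j$, so that the composite partial projection $\rho := \rho^c_{l+1}\circ\cdots\circ\rho^c_k\colon S(X,k)\to S(X,l)$ satisfies $\rho_l\circ\rho = \rho_k$. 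By \defref{def: subdn homotopic maps} there are $k',l'$ with $kk' = ll' =: m$ and a based homotopy $f\circ\rho_{k'} \approx g\circ\rho_{l'}\colon S(X,m)\to Y$, where $\rho_{k'}\colon S(X,m) = S\big(S(X,k),k'\big)\to S(X,k)$ and $\rho_{l'}\colon S(X,m) = S\big(S(X,l),l'\big)\to S(X,l)$ are standard projections, hence induce isomorphisms $(\rho_{k'})_*$ and $(\rho_{l'})_*$ by \thmref{thm: rho induces iso on pi1} applied to the digital images $S(X,k)$ and $S(X,l)$. Applying $(-)_*$ to the homotopy and invoking \lemref{lem: induced hom} then gives the identity
$$f_*\circ(\rho_{k'})_* \;=\; g_*\circ(\rho_{l'})_*\colon \pi_1\big(S(X,m);\overline{x_0}\big)\longrightarrow\pi_1(Y;y_0).$$

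If $k = l$ then $k' = l'$, and this identity reads $f_*\circ(\rho_{k'})_* = g_*\circ(\rho_{k'})_*$; cancelling the isomorphism $(\rho_{k'})_*$ on the right yields $f_* = g_*$. Next I would treat $k > l$ (the case $l > k$ follows by interchanging the roles of $(f,k)$ and $(g,l)$). The composite partial projection $\rho\colon S(X,k)\to S(X,l)$ is a based map by \defref{def:subdn basepoints}, and induces an isomorphism $\rho_*$, being a composite of the isomorphisms supplied by the corollary to \thmref{thm: rho induces iso on pi1}. The relations $\rho_k\circ\rho_{k'} = \rho_m = \rho_l\circ\rho_{l'}$ and $\rho_l\circ\rho = \rho_k$ give $\rho_l\circ(\rho\circ\rho_{k'}) = \rho_l\circ\rho_{l'}$; applying $(-)_*$, using functoriality, and cancelling the isomorphism $(\rho_l)_*$ yields $\rho_*\circ(\rho_{k'})_* = (\rho_{l'})_*$. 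Substituting this into the displayed identity, $f_*\circ(\rho_{k'})_* = g_*\circ(\rho_{l'})_* = g_*\circ\rho_*\circ(\rho_{k'})_*$, and cancelling the isomorphism $(\rho_{k'})_*$ gives $f_* = g_*\circ\rho_*$, as required.

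The only point requiring genuine care is the projection-compatibility identities used as the technical glue: making the identifications $S\big(S(X,j),j'\big)\cong S(X,jj')$ explicit and checking that, compatibly with the basepoint conventions of \defref{def:subdn basepoints}, one has $\rho_j\circ\rho_{j'} = \rho_{jj'}$ and $\rho_{j-1}\circ\rho^c_j = \rho_j$ (whence $\rho_l\circ\rho = \rho_k$). These are established in \cite{LOS19b}, and the factorization $\rho_{j-1}\circ\rho^c_j = \rho_j$ is already used in the proof of \thmref{thm: rho induces iso on pi1}; even if one of these identities were only to hold up to based homotopy, that would suffice by \lemref{lem: induced hom}. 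Granting them, the argument above is a short diagram chase involving no new continuity estimates.
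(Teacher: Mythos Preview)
Your proposal is correct and takes essentially the same approach as the paper: apply \lemref{lem: induced hom} to the homotopy $f\circ\rho_{k'}\approx g\circ\rho_{l'}$ to obtain $f_*\circ(\rho_{k'})_* = g_*\circ(\rho_{l'})_*$, then cancel the projection isomorphisms from \thmref{thm: rho induces iso on pi1} after factoring through the composite partial projection $\rho$ when $k\neq l$. The only cosmetic difference is that the paper asserts the map-level factorization $\rho_{l'} = \rho\circ\rho_{k'}$ directly, whereas you deduce the weaker statement $\rho_*\circ(\rho_{k'})_* = (\rho_{l'})_*$ by composing down to $X$ and cancelling $(\rho_l)_*$; either route suffices.
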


\begin{proof}
As in \defref{def: subdn homotopic maps}, we have a diagram
$$\xymatrix{ S(X, m) \ar[r]^-{\rho_{l'}} \ar[d]_{\rho_{k'}} & S(X, l) \ar[d]^{g}\\
S(X, k) \ar[r]_-{f} & Y}$$
that commutes up to based homotopy, with $m = kk'=ll'$.  If $k = l$, then $k' = l'$ and we have $f_*\circ (\rho_{k'})_* = g_*\circ (\rho_{k'})_*$ as homomorphisms, by \lemref{lem: induced hom}.  But $(\rho_{k'})_*$ is an isomorphism, by \thmref{thm: rho induces iso on pi1}, so we may cancel to obtain $f_* = g_*$.  Suppose that $k >l$.  Use the partial projections of
\defref{def: rho^c} to write $\rho_{l'} = \rho\circ\rho_{k'}\colon S(X, m) \to S(X, k) \to S(X, l)$, where
$$\rho = \rho^c_{l+1}\circ \cdots \circ \rho^c_{k}\colon S(X, k) \to S(X, k-1) \to \cdots \to S(X, l).$$
Here, then, we have a diagram
$$\xymatrix{ S(X, m) \ar[r]^-{\rho_{k'}} \ar[dd]_{\rho_{k'}} & S(X, k) \ar[d]^-{\rho} \\
 & S(X, l) \ar[d]^{g}\\
S(X, k) \ar[r]_-{f}   & Y}$$
that commutes up to based homotopy, and it follows as in the first part that we have $f_*  = g_*\circ \rho_*$.
Since $(\rho_{l'})_* = \rho_*\circ(\rho_{k'})_*$, and both $(\rho_{l'})_*$ and $(\rho_{k'})_*$ are isomorphisms by \thmref{thm: rho induces iso on pi1}, then so too is $\rho_*$ (and each of the homomorphisms $(\rho^c_j)_*$ induced by the partial projections) an isomorphism.

The case in which $l>k$ is handled likewise.  We omit the details.
\end{proof}

It follows easily from \lemref{lem: induced hom} that the fundamental group is preserved by based homotopy equivalence (Cf.~\defref{def: based h.e.} for the definition).  However, this notion of equivalence is so rigid that it is hard to make effective use of the fact.  By involving subdivision, we now obtain a result that says the fundamental group is preserved by a much more relaxed version of ``sameness" than homotopy equivalence.   Refer to \defref{def: subdn homotopy equiv} for the notion of subdivision-based homotopy equivalence.

\begin{theorem}\label{thm: subdn htpy equiv iso pi1}
If $X$ and $Y$ are subdivision-based homotopy equivalent, then they have isomorphic fundamental groups.
\end{theorem}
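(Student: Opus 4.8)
The plan is to deduce the statement from \thmref{thm: subdn htpic sam hom} together with the fact, established in \thmref{thm: rho induces iso on pi1} and its corollary, that every standard projection $\rho_k$ and every partial projection $\rho^c_k$ induces an isomorphism on $\pi_1$. First I would unwind \defref{def: subdn homotopy equiv}: a subdivision-based homotopy equivalence between $X$ and $Y$ provides a pair of based maps, say $f$ and $g$, between suitable subdivisions $S(X, a)$ and $S(Y, b)$, whose two composites $g\circ f$ and $f\circ g$ are subdivision-based homotopic (as maps) to the respective identity maps --- more precisely, to identity maps precomposed with standard projections, since in the subdivided setting the identity is only pinned down up to such reparametrisation.

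Next I would apply the fundamental group functor. By \lemref{lem: induced hom}, $(g\circ f)_* = g_*\circ f_*$ and $(f\circ g)_* = f_*\circ g_*$. Since $g\circ f$ is subdivision-based homotopic to an identity map, \thmref{thm: subdn htpic sam hom} gives that $g_*\circ f_*$ agrees, after composition with a canonical isomorphism $\rho_*$ induced by a partial projection, with the identity homomorphism of the appropriate $\pi_1$; and likewise $f_*\circ g_*$ agrees with an identity up to a canonical $\rho_*$. Because every such $\rho_*$, as well as each $(\rho_k)_*$, is an isomorphism by \thmref{thm: rho induces iso on pi1} and its corollary, it follows that $f_*$ and $g_*$ are, up to these canonical isomorphisms, mutually inverse. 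Finally, identifying $\pi_1$ of any subdivision of $X$ (respectively $Y$) with $\pi_1(X; x_0)$ (respectively $\pi_1(Y; y_0)$) via the corresponding $(\rho_k)_*$, the resulting diagram chase produces an isomorphism $\pi_1(X; x_0) \cong \pi_1(Y; y_0)$.

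I expect the only real obstacle to be the bookkeeping: one must track the various subdivision factors carefully so that the canonical isomorphisms $\rho_*$ appearing on the two sides of the two composites assemble into genuine two-sided inverses rather than merely ``essentially inverse'' maps, and one must check the basepoint conventions of \defref{def:subdn basepoints} are respected throughout, invoking \thmref{thm: indep basept} should a subdivision-based homotopy equivalence fail to preserve basepoints on the nose. Once the diagram is arranged correctly the conclusion is immediate from the isomorphisms already in hand.
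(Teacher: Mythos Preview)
Your overall strategy---apply the $\pi_1$ functor and use that every $(\rho_k)_*$ is an isomorphism---is the right one, and it is essentially what the paper does. But you have misread \defref{def: subdn homotopy equiv}, and this creates a real gap rather than mere bookkeeping. The data of a subdivision-based homotopy equivalence are \emph{not} maps $f\colon S(X,a)\to S(Y,b)$ and $g\colon S(Y,b)\to S(X,a)$ whose composites $g\circ f$ and $f\circ g$ are subdivision-based homotopic to identities. Rather, one has $f\colon S(X,k)\to Y$ and $g\colon S(Y,l)\to X$ together with \emph{covers} $F\colon S(X,kl)\to S(Y,l)$ of $f$ and $G\colon S(Y,kl)\to S(X,k)$ of $g$, and it is the mixed composites $f\circ G$ and $g\circ F$ that are subdivision-based homotopic to the identities. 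The composites $g\circ f$ and $f\circ g$ you propose to analyze do not even typecheck.

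Once you use the correct data, the argument via \thmref{thm: subdn htpic sam hom} yields that $f_*\circ G_*$ and $g_*\circ F_*$ are isomorphisms. From the first you get $f_*$ surjective, from the second $F_*$ injective. But to conclude that $f_*$ itself is injective you must pass from $F_*$ to $f_*$, and for this the paper uses the covering relation $\rho_l\circ F = f\circ \rho_l$ (part (b) of \defref{def: subdn homotopy equiv}): applying $\pi_1$ gives $(\rho_l)_*\circ F_* = f_*\circ(\rho_l)_*$, and since each $(\rho_l)_*$ is an isomorphism, $F_*$ is injective if and only if $f_*$ is. This step---exploiting the cover to transfer injectivity---is the heart of the paper's proof, and it is absent from your outline. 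With it in place, $f_*$ is a bijective homomorphism and $f_*\circ\big((\rho_k)_*\big)^{-1}\colon \pi_1(X;x_0)\to\pi_1(Y;y_0)$ is the desired isomorphism.
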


\begin{proof}
Refer to the data from \defref{def: subdn homotopy equiv}.  We have a map $f\colon S(X, k) \to Y$ for some $k$ that satisfies $f\circ G = \rho_{kl}$ for a certain auxiliary map $G$.  Then $f_*\circ G_* = (\rho_{kl})_*$ and, since $(\rho_{kl})_*$ is an isomorphism, it follows that $f_*$ is onto.  We also assume a cover $F$ of $f$, that satisfies $g\circ F = \rho_{kl}$ for some map $g$, which implies that $F_*$ is injective.  However, as a cover of $f$, we have $\rho_l\circ F =  f\circ \rho_l$.  Since each $(\rho_l)_*$ here is an isomorphism, this relation implies $F_*$ is injective exactly when $f_*$ is injective.  Thus $f_*$ is both onto and injective:  $f\colon S(X, k) \to Y$ induces an isomorphism of fundamental groups.  Then
$$f_*\circ \big( (\rho_k)_*\big)^{-1} \colon \pi_1(X, x_0) \to  \pi_1(S(X, k), \overline{x_0}) \to \pi_1(Y, y_0)$$
is an isomorphism.
\end{proof}

We may deduce the following more general version of \corref{cor: contractible pi1}.   We say that a based digital image $X\subseteq \Z^m$ is \emph{subdivision-based contractible} if $X$ is subdivision-based homotopy equivalent to a point, namely to some $\{y_0\} \subseteq \Z^n$ for a singleton point $y_0 \in \Z^n$.

\begin{corollary}\label{cor: subd contr pi1 triv}
If $X$ is subdivision-based contractible, then $\pi_1(X, x_0) \cong \{\mathbf{e}\}$.
\end{corollary}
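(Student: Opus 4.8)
The plan is to chain together two results already in hand: \thmref{thm: subdn htpy equiv iso pi1} and \corref{cor: contractible pi1}. By the very definition of subdivision-based contractibility (as stated just above the corollary), $X$ is subdivision-based homotopy equivalent to a singleton digital image $\{y_0\} \subseteq \Z^n$. Feeding this into \thmref{thm: subdn htpy equiv iso pi1} immediately produces an isomorphism $\pi_1(X, x_0) \cong \pi_1(\{y_0\}, y_0)$, so the entire task reduces to checking that the fundamental group of a single point is trivial.

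For that last step I would note that the singleton $\{y_0\}$ is (trivially) based-contractible: its identity map \emph{is} the constant map at $y_0$, and the constant function $H\colon \{y_0\}\times I_1 \to \{y_0\}$ is a based homotopy between them. \corref{cor: contractible pi1} then gives $\pi_1(\{y_0\}, y_0)\cong\{\mathbf{e}\}$, and composing the two isomorphisms yields $\pi_1(X, x_0)\cong\{\mathbf{e}\}$. Alternatively, and perhaps more transparently, one can bypass \corref{cor: contractible pi1} entirely: every based loop $\gamma\colon I_N \to \{y_0\}$ is forced to be the constant loop $C_N$, and in the proof of \thmref{thm: pi1} it was already observed that all the $C_N$ represent the single class $\mathbf{e}$; hence the set of subdivision-based homotopy classes of based loops in $\{y_0\}$ is the one-element set $\{\mathbf{e}\}$.

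There is essentially no obstacle to this argument; the one point that deserves a moment's care is that a subdivision of a point is \emph{not} a point — indeed $S(\{0\}, k) = I_{k-1}$ is an interval — so one should not attempt to collapse subdivisions of $X$ down to a literal singleton, but instead invoke the equivalence to $\{y_0\}$ exactly as packaged in \defref{def: subdn homotopy equiv} and already exploited in the proof of \thmref{thm: subdn htpy equiv iso pi1}.
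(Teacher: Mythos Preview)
Your proposal is correct and follows essentially the same approach as the paper: the paper's proof simply states that the result ``follows directly from \thmref{thm: subdn htpy equiv iso pi1},'' and your argument spells out exactly what this means---apply that theorem to the subdivision-based homotopy equivalence between $X$ and a singleton, then observe that a singleton has trivial fundamental group. The paper also remarks on an alternative route via \thmref{thm: subdn htpic sam hom} and \thmref{thm: rho induces iso on pi1}, but your primary argument matches the paper's primary one.
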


\begin{proof}
This follows directly from \thmref{thm: subdn htpy equiv iso pi1}.  In fact, for a subdivision-based contractible $X$,
\defref{def: subdn homotopy equiv} may be shown to be equivalent to the existence, for some $K$, of
a based homotopy
$$\rho_K \approx C_{x_0}\colon S(X, K) \to X$$
from some standard projection to the constant map at $x_0$.   So the conclusion could equally well be obtained from \thmref{thm: subdn htpic sam hom} and \thmref{thm: rho induces iso on pi1}.
\end{proof}

In our final main result, we give a digital version of the calculation of the fundamental group of a circle.   Recall from \exref{ex:basic digital images} that $D$ denotes our prototypical digital circle.   For our calculation, we use a notion of \emph{winding number} for loops in $D$ that we introduced in \cite{LOS19a}.    We note, again, that in  the framework of \cite{Bo99}, the Diamond $D$ is contractible
and thus \cite{Bo99} would have  $\pi_1(D) = \{\mathbf{e}\}$ (cf.~\remref{rem: graph product homotopy}).   Thus our calculation here marks an essential difference between our approach and that used previously in the literature.

\begin{theorem}\label{thm: pi1D}
With $D = \{ (1, 0), (0, 1), (-1, 0), (0, -1) \} \subseteq \Z^2$ the digital circle of \exref{ex:basic digital images}, we have $\pi_1\big(D; (1, 0)\big) \cong \Z$.
\end{theorem}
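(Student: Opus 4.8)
The plan is to build the isomorphism directly out of the digital winding number of \cite{LOS19a}. Recall that $D$ is combinatorially a $4$-cycle, with $(1,0)\sim(0,1)\sim(-1,0)\sim(0,-1)\sim(1,0)$ and the two ``diagonal'' pairs non-adjacent; so a based loop $\gamma\colon I_M\to D$ is just a closed walk in this $4$-cycle, each step of which either stalls or moves one vertex clockwise or counterclockwise. The winding number $w(\gamma)\in\Z$ of \cite{LOS19a} records the net number of counterclockwise circuits; I would first record the three properties of $w$ established there (and in any case easily checked from this combinatorial description): it is additive under concatenation, $w(\alpha\cdot\beta)=w(\alpha)+w(\beta)$; it is unchanged by reparametrization, $w(\gamma\circ\rho_k)=w(\gamma)$; and it is invariant under a based homotopy of based loops. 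Together these say exactly that $\gamma\mapsto w(\gamma)$ is constant on subdivision-based homotopy classes and respects the loop product, so it descends to a group homomorphism $w_*\colon\pi_1\big(D;(1,0)\big)\to\Z$, $w_*([\gamma])=w(\gamma)$.

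Next I would prove surjectivity, which is easy. Let $\sigma\colon I_4\to D$ be the loop with $\sigma(0)=(1,0)$, $\sigma(1)=(0,1)$, $\sigma(2)=(-1,0)$, $\sigma(3)=(0,-1)$, $\sigma(4)=(1,0)$. Each consecutive pair of values is adjacent in $D$, so $\sigma$ is a genuine based loop, and it traverses $D$ exactly once; thus $w(\sigma)=\pm1$, and replacing $\sigma$ by its reverse $\overline\sigma$ if necessary (cf.~\defref{def: inverse path}) we may assume $w(\sigma)=1$. Then $w_*([\sigma])=1$, and since $w_*$ is a homomorphism, $w_*\big([\sigma]^{n}\big)=n$ for every $n\in\Z$; so $w_*$ is onto.

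The substance of the argument is injectivity: given a based loop $\gamma\colon I_M\to D$ with $w(\gamma)=0$, one must show $[\gamma]=\mathbf{e}$. This is the digital counterpart of the statement that a winding-number-zero loop in $S^1$ is nullhomotopic, and it is here that I would import the real work from \cite{LOS19a}: either (i) the digital covering $p\colon\widetilde D\to D$ of the diamond by a digital line, together with its path- and homotopy-lifting properties, which lets me lift $\gamma$ (after a suitable subdivision) to a based loop $\widetilde\gamma$ in $\widetilde D$; because a winding-number-zero loop has a lift that closes up and therefore lies in a finite digital interval, $\widetilde\gamma$ is based-nullhomotopic, and pushing the contraction down through $p$ and applying \lemref{lem: induced hom} gives $[\gamma]=\mathbf{e}$; or (ii) a direct combinatorial reduction: a closed walk in the $4$-cycle with zero net displacement can be simplified to the constant walk by repeatedly deleting ``stalls'' and cancelling ``backtracks'' $x\to y\to x$, and each such move is realized by a subdivision-based homotopy via part (c) of \lemref{lem: loop product basics} and \lemref{lem: inverses}. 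Either route yields $[\gamma]=\mathbf{e}$, so $\ker w_*$ is trivial.

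Combining the three steps, $w_*$ is an isomorphism and $\pi_1\big(D;(1,0)\big)\cong\Z$. I expect the main obstacle to be injectivity, and more precisely the one ingredient I cannot extract from the material of this paper alone: a homotopy-lifting (or equivalent reduction) statement for the digital circle, which is exactly the winding-number machinery of \cite{LOS19a}. Once that is available as a black box, assembling $w_*$, checking that it is a homomorphism, and verifying surjectivity are all routine consequences of \lemref{lem: loop product basics}, \lemref{lem: inverses}, and \lemref{lem: induced hom}.
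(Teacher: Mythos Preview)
Your proposal is correct and follows essentially the same route as the paper: define the homomorphism via the winding number of \cite{LOS19a} (the paper normalizes by $1/4$, since that winding number is always a multiple of $4$), check surjectivity on an explicit generating loop, and prove injectivity by exactly your option~(i)---lift a winding-number-zero loop to a based loop in a finite interval $[-M,M]\subseteq\Z$, contract that interval by an explicit based homotopy fixing $0$, and push the contraction down through the projection $p$. Your combinatorial alternative~(ii) is not pursued in the paper, and note that no preliminary subdivision is needed for the lift: the path-lifting from \cite{LOS19a} applies directly to $\gamma\colon I_M\to D$.
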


\begin{proof}
In Section 6 of \cite{LOS19a}, we establish a winding number for loops in $D$.  The notion may be applied to based loops in $D$ as follows.  Suppose $\alpha\colon I_M \to D$ is a based loop.  For any choice of initial point $n_0 \in \Z$, there is a unique path $\overline{\alpha} \colon I_M \to [n_0-M, n_0+M]\subseteq \Z$ that starts at $n_0$ and for which the following diagram commutes
$$\xymatrix{
\{0\} \ar[r]^-{f} \ar[d]_-{i}  & [n_0-M, n_0+M] \ar[d]^-{p}\\
I_N \ar[r]_-{\alpha}  \ar[ru]^-{\overline{\alpha}}  & D.    } $$
Here, $p$ denotes the (restriction of the) projection $p \colon \Z \to D$
defined by $p(n) =  e^{n\pi i/2}$ (we identify a complex number $a + ib$ with the point $(a, b)$ in $\R^2$), and $f(0) = n_0$ is simply the choice of initial point for the lift.  We show this in Lemma 6.1 of \cite{LOS19a}, and in Corollary 6.2 of the same, we conclude that each based loop $\alpha$ has a winding number $w(\alpha) \in \Z$, which is independent of the choice of initial point of the lift, and which may be defined by $w(\alpha) = \overline{\alpha}(M) - \overline{\alpha}(0)$.

First, we claim that this winding number is an invariant of a subdivision-based homotopy equivalence class of based loops in $D$.  That is, we claim that if $[\alpha] = [\beta] \in \pi_1\big(D; (1, 0) \big)$, then we have $w(\alpha) = w(\beta)$.   For suppose that $\alpha\colon I_M \to D$ and $\beta\colon I_N \to D$ are based loops, and that we have suitable $m, n$ and a based homotopy of based loops $\alpha\circ \rho_m \approx \beta\circ \rho_n\colon I_{mM+m-1} = I_{nN+n-1} \to D$.  Suppose $\overline{\alpha}\colon I_M \to [-M, M]$ is the unique lift of $\alpha$ that starts at $0 \in [-M, M] \subseteq \Z$.
Then $\overline{\alpha}\circ\rho_m \colon  I_{mM+m-1} \to [-M, M]$ is the unique lift of  $\alpha\circ \rho_m$ that starts at $0 \in \Z$.
Hence we have
$$
\begin{aligned}
w(\alpha) &= \alpha(M) - \alpha(0) = \overline{\alpha}\circ\rho_m(mM+m-1) - \overline{\alpha}\circ\rho_m(0) \\
&= \overline{ \alpha\circ\rho_m}(mM+m-1) - \overline{ \alpha\circ\rho_m}(0)=w(\alpha\circ\rho_m).
\end{aligned}$$
Similarly, we have $w(\beta) = w(\beta\circ\rho_n)$.  Now Proposition 6.4 of \cite{LOS19a} shows that homotopic loops in $D$ have the same winding number.  Therefore, we have
$$w(\alpha) = w(\alpha\circ\rho_m) = w(\beta\circ\rho_n) = w(\beta),$$
and the winding number is invariant on elements of $\pi_1\big(D; (1, 0)\big)$.

So---bearing in mind that the winding number $w(\alpha)$, as constructed in \cite{LOS19a}, is actually a multiple of $4$---define a function
$$h\colon \pi_1\big(D; (1, 0)\big) \to \Z$$
by setting $h([\alpha]) = (1/4) w(\alpha)$.  This is well-defined by the preceding paragraph.  Furthermore, $h$ is also a homomorphism.  For suppose we have elements $[\alpha], [\beta] \in \pi_1\big(D; (1, 0)\big)$  represented by based loops $\alpha \colon I_M \to D$ and $\beta\colon I_N \to D$.  Suppose we lift $\alpha$ to the (unique) path $\overline{\alpha}$  in $\Z$ that starts at $0$.  Then let $\overline{\beta} \colon I_N \to \Z$ be the (unique) path that lifts $\beta$ and starts at $\overline{\alpha}(M)$, so that we have $\overline{ \beta}(0) =   \overline{  \alpha}(M)$.  Then the concatenation of paths
$\overline{\alpha}\cdot \overline{\beta} \colon I_{M+N+1} \to \Z$ is the lift of the concatenation of loops $\alpha\cdot\beta\colon I_{M+N+1} \to D$ (the unique such lift  that starts at $0$).  Hence, we may write
$$
\begin{aligned}
h([\alpha]\cdot[\beta])  &= h([\alpha \cdot \beta]) = \frac{1}{4} \,w( \alpha \cdot \beta)  = \frac{1}{4} \big(  \overline{ \alpha \cdot \beta}(M+N+1) -   \overline{ \alpha \cdot \beta}(0) \big)\\
&= \frac{1}{4} \big(   \overline{ \beta}(N) -   \overline{  \alpha}(0) \big)     =  \frac{1}{4} \big(  \overline{ \beta}(N)  +\big( - \overline{ \beta}(0) +  \overline{  \alpha}(M) \big)   -    \overline{ \alpha}(0) \big)        \\
&= \frac{1}{4} \big(  \overline{ \beta}(N)   - \overline{ \beta}(0)\big)  +  \frac{1}{4} \big(   \overline{  \alpha}(M)    -    \overline{ \alpha}(0) \big)\\
& = \frac{1}{4} w( \alpha) +  \frac{1}{4} w( \beta) = h(\alpha) + h(\beta).
\end{aligned}$$

Finally, we show that $h$ is actually an isomorphism.  It is easily seen to be surjective.  Indeed, the based loop $\alpha_n\colon I_n \to D$ defined as
$$\alpha_n(t) = p(n) =  e^{n\pi i/2}$$
has, more-or-less tautologically, winding number $w(\alpha_n) = 4n$ for each $n \in \Z$.  Therefore, we have $h([\alpha_n]) = n$, and it follows that $h$ is surjective.

For injectivity of $h$, suppose we have a based loop $\alpha \colon I_M \to D$ for which $h([\alpha] = 0$.  Then let $\overline{\alpha}\colon I_M \to [-M, M] \subseteq \Z$ be the lift of $\alpha$ that starts at $\overline{\alpha}(0) = 0$.  Since $h([\alpha] = (1/4) (\overline{\alpha}(M) - 0) = 0$, we must have  $\overline{\alpha}(M) = 0$ as well, so that the lift $\overline{\alpha}$ is a based loop in $[-M, M]$. Now this interval is contractible, and furthermore is contractible to $\{0\}$ via a homotopy that keeps $\{0\}$ fixed.  Explicitly, we may define a contracting homotopy $H \colon [-M, M] \times I_M \to [-M, M]$ as
$$
H(s, t) = \begin{cases}  -M + t & -M \leq s \leq -M + t\\
s & -M + t \leq s \leq M - t\\
M-t & M-t \leq s \leq M.\end{cases}
$$
A direct check using the formulas confirms that we have $H(s. 0) = s$ and $H(s, M) = 0$ for $s \in [-M, M]$, and also $H(0, t) = 0$ for each $t \in I_M$.

Continuity of $H$ may be confirmed with an argument similar to that used in the proof of \lemref{lem: inverses}.
As there, we divide  $[-M, M] \times I_M$
into two regions:  Region $A$, consisting of those points $(s, t)$  that satisfy both $t-s \leq M$ and $t+s \leq M$; and region $B$, consisting of the points that satisfy either $t-s \geq M$ or $t+s \geq M$.  The shapes of these regions (triangular) are the same as the shapes of the regions in the proof of \lemref{lem: inverses}, although here they are reversed with respect to the $t$-coordinate and also the horizontal coordinate here has range $-M \leq s \leq M$ rather than $[0, 2M+1]$ as in \lemref{lem: inverses}.  The regions here do not overlap quite as much as those of the proof of \lemref{lem: inverses}, but the basic steps in confirming continuity of $H$ remain the same.     We indicate some of these details.
Given a pair of adjacent points $(s, t)\sim (s', t')$ in  region $A$ of $[-M, M] \times I_M$, the formula for $H$ gives us $H(s, t) =  s$ and $H(s', t') = s'$.  If $(s, t)\sim (s', t')$, then we have $|s' - s| \leq 1$, hence we have $H(s', t') \sim H(s, t)$.

On the other hand, suppose adjacent points $(s, t) \sim (s', t')$ both lie in region $B$.
First suppose that $t-s \geq M+1$ and $t'-s' \geq M$ (the left-hand half of region $B$).  Here we have
$H(s, t) = -M+t$ and $H(s', t') = -M+t'$, and so $|H(s', t') - H(s, t)| = |t' - t| \leq 1$ since $(s, t) \sim (s', t')$.  But if we have $t+s \geq M$ and $t'+s' \geq M$ (the right-hand half of region $B$), then we have
$H(s, t) = M-t$ and $H(s', t') = M-t'$, and again $|H(s', t') - H(s, t)| = |-t' + t| \leq 1$ since $(s, t) \sim (s', t')$.  So for adjacent points that lie either both in $A$ or both in $B$, we see that $H$ preserves adjacency.  It remains to check that $H$ also preserves adjacency when one of $(s, t)$ and $(s', t')$ lies in $A$ and the other in $B$. Although straightforward,  we omit these details on the grounds that they are similar to some of those in the proof of \lemref{lem: inverses}, and are a little long-winded to go through.  We proceed on  the basis that $H$ is indeed a (continuous) contracting homotopy.

Recall from above the definition of $H$ that $\overline{\alpha}\colon I_M \to [-M, M]$ is the  based loop in $[-M, M]$ that lifts $\alpha$.
Then setting $\mathcal{H} = p\circ H\circ (\overline{\alpha} \times \mathrm{id}_{I_M}) \colon I_M \times I_M \to I_M$ gives a (continuous) homotopy that starts at
$$\mathcal{H}(i, 0) = p\circ H\big( \overline{\alpha}(i), 0\big) =  p\circ \overline{\alpha}(i) = \alpha(i),$$
and ends at  $\mathcal{H}(i, M) = p\circ H\big( \overline{\alpha}(i), M\big) =  p(0) = (1, 0)$, for $i \in I_M$.  Furthermore, we have
$$\mathcal{H}(0, t) = p\circ H\big( \overline{\alpha}(0), t\big) =  p\circ H\big( 0, t\big) = p(0) = (1, 0),$$
as well as
$$\mathcal{H}(M, t) = p\circ H\big( \overline{\alpha}(M), t\big) =  p\circ H\big( 0, t\big) = p(0) = (1, 0),$$
for each $t \in I_M$.  So $\mathcal{H}$ is a based homotopy of based loops $\alpha \approx C_M\colon I_M \to D$.  We have $[\alpha] = \mathbf{e} \in  \pi_1\big(D; (1, 0)\big)$; this is sufficient to conclude that $h$ is injective, as we have already shown it to be a homomorphism.  The result follows.
\end{proof}

We finish the main body with a discussion of the two digital circles $D$ and $C$ from \exref{ex:basic digital images}.

\begin{example}\label{ex: pi1 D and C}
First, we confirm that  $D$ and $C$ from \exref{ex:basic digital images} are not (based) homotopy equivalent. As we observed in \exref{ex:basic digital images}, the issue is that we cannot wrap $D$ around $C$.  Formally, this plays out as follows.  Suppose we have any map $f \colon D \to C$.  Assume $f$ is based, so that we have $f(1, 0) = (2, 0)$.  Because we must have $f(0, 1) \sim f(1, 0)$ and $f(0, -1) \sim f(1, 0)$, and then also $f(1, 1) \sim f(0, 1)$, it follows that the image $f(D)$ is contained in the subset of $C$
$$U = \{ (x_1, x_2) \in C \mid x_1 \geq 0\} \subseteq C.$$
Now this subset $U$ is (based) contractible (in the rigid sense given above \corref{cor: contractible pi1}).  We may justify this assertion carefully as follows.  Define a ``coordinate-centring" function $\lambda \colon [-2, 2] \to [-2, 2]$ (see \eqref{eq: function C} below for some comments) by setting
$$\lambda(x) = \begin{cases} x+1 & x=-2, -1\\
0 & x=0\\
x-1 & x=1, 2.\end{cases}$$
Then, use this function to define a contracting homotopy $H \colon U \times I_2 \to U \subseteq C$ by
$$H\big( (x_1, x_2), t \big) = \begin{cases}  \big( \lambda^t(x_1), 2 - \lambda^t(x_1)\big) & x_2 \geq 0\\
  \big( \lambda^t(x_1), \lambda^t(x_1)-2\big) & x_2 \leq 0.\end{cases}$$
 This is a continuous function, because each of its coordinate functions is: each one is a function only of $x_1$, and is continuous in that variable because $\lambda$ is.   A direct check confirms that we have $H\big( (x_1, x_2), 0 \big) = (x_1, x_2)$ and $H\big( (x_1, x_2), 2 \big) = (2, 0)$ for $(x_1, x_2) \in U$.    Furthermore, we have
  $H\big( (2, 0), t \big) = (2, 0)$ for each $t \in I_2$, so  this is a based homotopy of based maps $\mathrm{incl.} \approx *\colon U \to C$, from the inclusion of $U$ in $C$ to the constant map $*$ of $U$ at $(2, 0)$.    Then $H \circ (f\times \mathrm{id}_{I_2}) \colon D \times I_2 \to  C$ is a based  homotopy  from $f$ to the constant map $*$ of $D$ at $(2, 0)$.  Now it follows that $f$ cannot have a left  inverse up to based homotopy:  If there were a based map $g \colon C \to D$ with $g\circ f \approx \mathrm{id}_D \colon D \to D$, it would follow that $D$ is contractible, which we know is not so by  the calculation of \thmref{thm: pi1D} together with  \corref{cor: contractible pi1} (and we also show this directly in \cite{LOS19a}).   In particular, $D$ and $C$ cannot be based-homotopy equivalent.

However,  $D$ and $C$ \emph{are} subdivision-based homotopy equivalent.  Actually, we have $S(D, 2)$ and $C$ based-homotopy equivalent, here, which is a particular way in which digital images may be subdivision-based homotopy equivalent. But we give data for $D$ and $C$ that matches  \defref{def: subdn homotopy equiv}.  First, we have maps $f\colon S(D, 2) \to C$ and $g\colon C \to D$, defined as
$$f(2, 0) = f(3, 0) = (2, 0), \quad f(2, 1) = f(3, 1) = (1, 1),$$
$$f(1, 2) = f(1, 3) = (0, 2), \quad f(0, 2) = f(0, 3) = (-1, 1),$$
$$f(-2, 1) = f(-1, 1) = (-2, 0), \quad f(-2, 0) = f(-1, 0) = (-1, -1),$$
$$f(0, -1) = f(0, -2) = (0, -2), \quad f(1, -1) = f(1, -2) = (1, -1),$$
and
$$g(2, 0) = g(1, 1) = (1, 0), \quad g(0, 2) = g(-1, 1) = (0, 1),$$
$$g(-2, 0) = g(-1, -1) = (-1, 0), \quad g(0, -2) = g(1, -1) = (0, -1).$$
In \defref{def: subdn homotopy equiv}, then, $D$ and $C$ play the roles of $X$ and $Y$, respectively, with $k = 2$ and $l=1$.  Then $F=f$, and the composition $g\circ F = g\circ f = \rho_2\colon S(D, 2) \to D$, which is subdivision-based homotopic to $\mathrm{id}_D$.  For the cover $G$ of $g$,  since any cover $G$ is acceptable in  \defref{def: subdn homotopy equiv}, define a map $g'\colon C \to S(D, 2)$ by
$$g'(2, 0) =  (2, 0), \quad g'(1, 1) = (2, 1), \quad g'(0, 2) = (1, 2)\quad g'(-1, 1) = (0, 2),$$
$$g'(-2, 0) =  (-1, 1), \quad g'(-1, 1) = (0, -1), \quad g'(0, -2) = (0, -1)\quad g'(1, -1) = (1, -1),$$
 and then set $G = g'\circ \rho_2\colon S(C, 2) \to S(D, 2)$ (in fact, this $g'$ is a based-homotopy inverse of $f$, giving the based-homotopy equivalence between $C$ and $S(D, 2)$ hinted at above). Checking continuity of $g'$ and the covering property of $G$ is direct. Finally, to complete the data of  \defref{def: subdn homotopy equiv}, we require the composition
 $f\circ G \colon S(C, 2) \to C$ to be subdivision-based homotopy equivalent to the identity $\mathrm{id}_Y$.  But $f\circ g' = \mathrm{id}_C \colon C \to C$, as is easily checked, and so we have  $f\circ G = f\circ g' \circ \rho_2 = \rho_2\colon S(C, 2) \to C$, so the data are complete.

In summary, by \thmref{thm: subdn htpy equiv iso pi1},  $D$ and $C$ have isomorphic fundamental groups (isomorphic to $\Z$, by \thmref{thm: pi1D}),   despite not being based-homotopy equivalent.
\end{example}

Although $D$ and $C$ are relatively simple examples of digital images, nonetheless we feel \exref{ex: pi1 D and C} represents significant progress. We have directly calculated the fundamental group in a single, basic case and then used a relation less rigid than that of (based) homotopy equivalence to deduce its value for a different digital image.  The general strategy represented here suggests that our approach is  more suitable for homotopy theory in the digital setting than other developments in the literature.

\section{Directions for Future Work}\label{sec: Future Work}

Some our main results here rely on the results of \cite{LOS19b} about subdivisions of maps.  Those results are for maps with 1D or 2D domains.  Whilst they suffice for our needs here, we are working to try and extend those results to higher-dimensional domains.  If successful, it should then be possible to develop digital higher homotopy groups, and establish basic properties of them, in much the same spirit as our fundamental group results here.

It would be interesting to develop ways of effectively computing the fundamental group. For example, is it possible to establish a Seifert-Van Kampen theorem in this setting?  Also, is it possible to identify certain structures a digital image may have that impact or determine its fundamental group (H-space, co-H-space, wedge, etc.)?

In \cite{LOS19a} and \cite{LOS19b} we have begun to develop versions of homotopy invariants (such as L-S category) that are much less rigid  than versions of the same that exist in the digital topology  literature.  It would be interesting, where possible, to make connections between these notions and the fundamental group comparable to those that exist in the topological setting.

\appendix

\section{A Technical Result}\label{sec: technical}

We prove the technical result \corref{cor: homotopy alpha-rho cover} that we relied upon to establish injectivity of $(\rho_{2k+1})_*$  in the proof of
 \thmref{thm: rho induces iso on pi1}.  The  results here should be viewed in that context.  Unfortunately, the proof is rather lengthy, with many technical details.  Furthermore, there is considerable notation introduced for the purpose of the proof (some of which is drawn from the following   \appref{Appx: technical} below).  Including all this material in the main body would interrupt the flow of ideas there, hence we have placed it here. 

Suppose $\alpha\colon I_M \to S(X, 2k+1)$ is a based loop in $S(X, 2k+1)$, for  $X \subseteq \Z^n$ any based digital image.  Denote the basepoint of $X$ by $\mathbf{x_0} \in X$; we suppose $\alpha$ is based at $\overline{\mathbf{x_0}} \in S(X, 2k+1)$.  We use $\alpha$ to construct a new based loop in $S(X, 2k+1)$ as follows.
For this, we use a device similar to what we called the  \emph{coordinate-centring function} in \cite{LOS19b}.   Namely, define a function $C\colon I_{2k} \to I_{2k}$ by
\begin{equation}\label{eq: function C}
C(r) = \begin{cases}  r+ 1 & 0 \leq r \leq k-1\\
k & r = k \\
 r- 1 & k+1 \leq r \leq 2k.\end{cases}
\end{equation}
Then applying $C$ repeatedly will increase or decrease an integer in $I_{2k}$ to $k$, according as the integer is less than $k$, or greater than $k$, and stabilize at $k$ once there.
For any $r$ with $0 \leq r \leq 2k$, we have $C^p(r) = k$ for all $p \geq k$.

\emph{Notation (leading up to \lemref{lem: homotopy alpha-rho beta})}:
For any $i \in I_M$, suppose  that $\rho_{2k+1} \circ \alpha(i) = \mathbf{x_i} = (x_1, \ldots, x_n) \in X$, so that  $\alpha(i) \in S( \mathbf{x_i}, 2k+1)$.  Then we have
$$\alpha(i) = \big( (2k+1)x_1 + r_1, \ldots, (2k+1)x_n + r_n \big),$$
for some $r_j$  with $0 \leq r_j \leq 2k$, for $j = 1, \ldots, n$.   The centre of $S( \mathbf{x_i}, 2k+1)$ is
$$\overline{ \mathbf{x_i}}  = \big( (2k+1)x_1 + k, \ldots, (2k+1)x_n + k \big).$$
For each $i \in I_M$, then, define a path
$$\gamma_i\colon I_k \to  S( \mathbf{x_i}, 2k+1),$$
that goes from $\alpha(i)$ to $\overline{\mathbf{x_i}}$ then remains at $\overline{\mathbf{x_i}}$  for any remaining time in $I_k$, as follows:
 \begin{equation}\label{eq: gamma}
 \gamma_i(t) =  \big( (2k+1)x_1 + C^t(r_1), \ldots, (2k+1)x_n + C^t(r_n) \big),
 \end{equation}
for $0 \leq t \leq k$,  where $C^0(r) = r$.  (Notice that the  maximum difference $|r_j - k|$ in any one coordinate of $\alpha(i)$ and $\overline{\mathbf{x_i}}$ cannot be greater than $k$, so we have $C^k(r_j) = k$ for all $j$.)
If $i = 0$ or $i=M$, we have  $\alpha(0) = \alpha(M) = \overline{x_0}$,  the centre of $S( \mathbf{x_0}, 2k+1)$.  If we write the coordinates of $\mathbf{x_0}$ as $(x_1, \ldots, x_n)$, then we have
$$\alpha(0) = \alpha(M) =\big( (2k+1)x_1 + k, \ldots, (2k+1)x_n + k \big),$$
so each $r_j = k$, and $\gamma_0\colon I_k \to   S( \mathbf{x_0}, 2k+1)$ and $\gamma_M\colon I_k \to   S( \mathbf{x_0}, 2k+1)$ are both the constant path at $\mathbf{x_0}$.  Then, string these paths and their reverses together to define $\beta$. Decompose $S(I_M, 2k+1)$ using the centres  $\overline{i} = (2k+1)i + k$ for $i = 0, \ldots, M$, so we have
$$S(I_M, 2k+1)  = [0, \overline{0}] \cup \bigcup_{i=0}^{i=M-1}\ [\overline{i}, \overline{i+1}]\ \cup [\overline{M}, (2k+1)M + 2k].$$
(This is not a disjoint union;  the subintervals overlap at their endpoints.)
Then define $\beta$ as the constant path at $\overline{\mathbf{x_0}}$ on $[0, \overline{0}] \cup [\overline{M}, (2k+1)M + 2k]$.  On each subinterval between centres
 $[\overline{i}, \overline{i+1}]$, for $0 \leq i \leq M$, use \eqref{eq: gamma} to define
\begin{equation}\label{eq: beta}
\beta(\overline{i}+s) =  \begin{cases}  \gamma_i(k-s) & 0 \leq s \leq  k\\ \gamma_{i+1}\big(s -(k+1)\big) & k+1 \leq s \leq 2k+1.\end{cases}
\end{equation}
On each subinterval $[\overline{i}, \overline{i+1}]$, then, $\beta$ restricts to the concatenation of paths $\overline{\gamma_i}\cdot \gamma_{i+1}$, in the sense we defined concatenation of paths in \defref{def: concatenation}.

\begin{lemma}\label{lem: homotopy alpha-rho beta}
Suppose $\alpha\colon I_M \to S(X, 2k+1)$ is a based loop in $S(X, 2k+1)$, for  $X \subseteq \Z^n$ any based digital image.  With
$\beta\colon S(I_M, 2k+1) \to S(X, 2k+1)$ the based loop defined as in \eqref{eq: beta} above, we have a based homotopy of based loops
$$\beta \approx \alpha\circ \rho_{2k+1}  \colon S(I_M, 2k+1) \to S(X, 2k+1).$$
\end{lemma}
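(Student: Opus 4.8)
The plan is to re-group the loop $\beta$ around the centre points $\overline{i}$ rather than the segments $[\overline{i}, \overline{i+1}]$, so that $\beta$ is exhibited block-by-block as a ``there and back'' short concatenation, and then to deform each such block to a constant using \lemref{lem: inverses}. Concretely, for $0 \le i \le M$ set $N_i = [\overline{i} - k, \overline{i} + k] \subseteq S(I_M, 2k+1)$. Since $\overline{i} - k = \overline{i-1} + k + 1$, these are $(2k+1)$-point blocks that are pairwise disjoint, consecutive, and whose union is all of $S(I_M, 2k+1) = I_{(2k+1)M + 2k}$; here $N_0$ starts at $0$ and $N_M$ ends at $(2k+1)M+2k$. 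A direct inspection of \eqref{eq: beta} shows that $N_i$ consists of the ``back half'' of $[\overline{i-1}, \overline{i}]$, on which $\beta$ traces out $\gamma_i$ from $\alpha(i)$ to $\overline{\mathbf{x_i}}$, followed by the ``front half'' of $[\overline{i}, \overline{i+1}]$, on which $\beta$ traces out $\overline{\gamma_i}$ from $\overline{\mathbf{x_i}}$ back to $\alpha(i)$ (when $i = 0$ or $i = M$, both halves are constant at $\overline{\mathbf{x_0}}$, since $\gamma_0$ and $\gamma_M$ are constant). Hence $\beta$ restricted to $N_i$ is exactly the short concatenation $\gamma_i * \overline{\gamma_i}$ of \defref{def: concatenation}, a loop of length $2k$ based at $\alpha(i)$. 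On the other hand $\rho_{2k+1}(s) = i$ for every $s \in N_i$, so $\alpha\circ\rho_{2k+1}$ restricted to $N_i$ is the constant loop $C^{\alpha(i)}_{2k}$.

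Next I would invoke \lemref{lem: inverses}: applied with the path $\gamma_i$ (so the ``$M$'' there is our $k$), it supplies a homotopy \emph{relative the endpoints} $H_i \colon N_i \times I_k \to S(X, 2k+1)$ from $\gamma_i * \overline{\gamma_i}$ to $C^{\alpha(i)}_{2k}$, and each $H_i$ satisfies $H_i(\overline{i} - k, t) = H_i(\overline{i} + k, t) = \gamma_i(0) = \alpha(i)$ for every $t \in I_k$. Since all the $H_i$ have the same parameter length $k$, we may juxtapose them along the $s$-direction to define a single function $\mathcal{H} \colon I_{(2k+1)M+2k} \times I_k \to S(X, 2k+1)$ by $\mathcal{H}|_{N_i \times I_k} = H_i$. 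This is well-defined because the $N_i$ are pairwise disjoint; it restricts at $t = 0$ to $\beta$ and at $t = k$ to $\alpha\circ\rho_{2k+1}$; and $\mathcal{H}(0, t) = \mathcal{H}((2k+1)M+2k, t) = \alpha(0) = \alpha(M) = \overline{\mathbf{x_0}}$ for all $t$. So $\mathcal{H}$ is a based homotopy of based loops as required, provided it is continuous.

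The continuity check is the one point that needs care, and is the only real obstacle. Within a single block $N_i \times I_k$, continuity of $\mathcal{H}$ is precisely the continuity of $H_i$ given by \lemref{lem: inverses}. The remaining case is a pair of adjacent points $(s,t) \sim (s', t')$ that straddles the boundary between consecutive blocks $N_i$ and $N_{i+1}$; since $|s - s'| \le 1$, this forces $\{s, s'\} = \{\overline{i}+k, \overline{i}+k+1\}$, and the fixed-endpoint property of the $H_i$ gives $\mathcal{H}(s, t) = \alpha(i)$ and $\mathcal{H}(s', t') = \alpha(i+1)$. These are adjacent in $S(X, 2k+1)$ because $\alpha$ is continuous and $i \sim_{I_M} i+1$, so $\mathcal{H}(s, t) \sim_{S(X,2k+1)} \mathcal{H}(s', t')$. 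This establishes continuity of $\mathcal{H}$, and with it the lemma.
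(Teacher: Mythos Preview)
Your proof is correct and follows essentially the same approach as the paper: both decompose $S(I_M,2k+1)$ into the blocks $S(i,2k+1)=[\overline{i}-k,\overline{i}+k]$, recognize $\beta$ restricted to each block as the short concatenation $\gamma_i*\overline{\gamma_i}$, contract it to the constant $C^{\alpha(i)}_{2k}$ via \lemref{lem: inverses}, and then glue using the adjacency $\alpha(i)\sim\alpha(i+1)$. The only cosmetic difference is that the paper extends each block by one extra column to form overlapping pieces $[(2k+1)i,(2k+1)(i+1)]\times I_k$ before gluing, whereas you keep the blocks disjoint and check continuity directly at the seam; the substance is identical.
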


\begin{proof}
To define a homotopy $H$ over the whole of $S(I_M, 2k+1) \times I_k$, it is sufficient to define it and check continuity on each subrectangle $[(2k+1)i, (2k+1)(i + 1)]\times I_k$, as well as the subrectangle $[(2k+1)M, (2k+1)M + 2k]\times I_k$,  separately, so long as the separate definitions agree on the overlaps $\{(2k+1)i\} \times I_k$.  This is because any two points of $S(I_M, 2k+1)\times I_k$ that are adjacent lie in one or the other of these subrectangles.   We begin by defining $H$ on $S(i, 2k+1) \times I_k$ for each $i \in I_M$.

For $i=0$, we have $S(0, 2k+1) = [0, 2k]$.  Now here, we have $\alpha\circ\rho_{2k+1}(s) = \alpha(0) = \overline{\mathbf{x_0}}$ for each $s \in [0, 2k]$.  Also, above \eqref{eq: beta} we specified that $\beta(s) = \overline{\mathbf{x_0}}$ for $s\in [0, \overline{0}]$, whilst \eqref{eq: beta} has $\beta(\overline{0}+s) =  \gamma_0(k-s)$ for $0 \leq s \leq k$.  But  the $\gamma_i$ are defined such that we have $\gamma_0$ the constant path at $\overline{\mathbf{x_0}}$ also (see the comments following \eqref{eq: gamma}).  That is, we also have $\beta(s) =  \overline{\mathbf{x_0}}$ for each $s \in [0, 2k]$. Since $\alpha\circ\rho_{2k+1}(s) = \beta(s) =  \overline{\mathbf{x_0}}$ for each $s \in [0, 2k]$, we may define $H$ to be the constant map $H(s, t) =  \overline{\mathbf{x_0}}$ on $[0, 2k]\times I_k$.  A similar state of affairs pertains at the other end of $S(I_M, 2k+1)$: we have $\alpha\circ\rho_{2k+1}(s) = \beta(s) =  \overline{\mathbf{x_0}}$ for each $s \in [(2k+1)M, (2k+1)M+2k]$, and we define $H$ to be the constant map $H(s, t) =  \overline{\mathbf{x_0}}$ on $[(2k+1)M, (2k+1)M+2k]\times I_k$.

Now consider $i$ for $1 \leq i \leq M-1$.  Here, we have $\alpha\circ\rho_{2k+1}(s) = \alpha(i)$ for each $s \in S(i, 2k+1)$. We write $S(i, 2k+1) = [(2k+1)i, (2k+1)i + 2k]$ as
$$
\begin{aligned}
S(i, 2k+1) & = [(2k+1)i, (2k+1)i + k-1] \sqcup [(2k+1)i+k, (2k+1)i + 2k] \\
& = [\overline{i-1}+k+1, \overline{i-1}+2k] \sqcup [\overline{i}, \overline{i}+k] \\
& = \{ \overline{i-1}+k+1+s \mid 0 \leq s \leq k-1\} \sqcup \{ \overline{i}+s-k \mid k \leq s \leq 2k\}.
\end{aligned}
$$
According to \eqref{eq: beta}, then, we have $\beta$ restricted to $S(i, 2k+1) = [(2k+1)i, (2k+1)i + 2k]$ given by
$$
\begin{aligned}
\beta\big( (2k+1)i + s \big)&= \begin{cases} \beta( \overline{i-1}+k+1+s) & 0 \leq s \leq k-1\\
\beta(  \overline{i}+s-k) & k \leq s \leq 2k\end{cases}\\
&= \begin{cases} \gamma_{ (i-1)+1}\big( (k+1+s) -(k+1)\big) & 0 \leq s \leq k-1\\
\gamma_i\big( k-(s-k)\big) & k \leq s \leq 2k\end{cases}\\
&= \begin{cases} \gamma_{i}(s) & 0 \leq s \leq k-1\\
\gamma_i( 2k-s) & k \leq s \leq 2k\end{cases}\\
&= (\gamma_i*\overline{\gamma_i})(s),
\end{aligned}
$$
where $\gamma_i*\overline{\gamma_i}$ denotes the short concatenation of paths as in \eqref{eq: short concat} of \defref{def: concatenation}. By \lemref{lem: inverses}, we have a homotopy relative the endpoints $H \colon I_{2k}\times I_k \to S(X, 2k+1)$  from $\gamma_i*\overline{\gamma_i}$ to $C^{\gamma_i(0)}_{2k} = C^{\alpha(i)}_{2k}$, which translates to a homotopy relative the endpoints  $H \circ (T\times \mathrm{id}_{I_k})\colon [(2k+1)i, (2k+1)i + 2k] \times I_k \to S(X, 2k+1)$ from the restriction of $\beta$ to the restriction of  $\alpha\circ\rho_{2k+1}$, where $T \colon  [(2k+1)i, (2k+1)i + 2k]  \to [0, 2k]$ is the translation $T(s) = s-2k$.

Then, we extend $H$ over the subrectangle $[(2k+1)i, (2k+1)(i + 1)]\times I_k$ by setting
$H\big((2k+1)(i + 1), t\big) = \alpha(i+1)$ for all $t \in I_k$.  Notice that this is consistent with how we would define $H$ over the next subinterval $S(i+1, 2k+1)$.  Since $H$ is already constant at $\alpha(i)$ on $\{(2k+1)i + 2k\} \times I_k$, and we have $\alpha(i) \sim_{S(X, 2k+1)} \alpha(i+1)$ since $\alpha$ is continuous, it follows that this extends $H$ continuously over  $[(2k+1)i, (2k+1)(i + 1)]\times I_k$.
As already discussed, these homotopies piece together to give a homotopy
$$H\colon S(I_M, 2k+1) \times I_M \to S(X, 2k+1)$$
from $\beta$ to $\alpha\circ \rho_{2k+1}$. Since we have
$H\big( 0, t\big) =  \alpha(0) = \overline{x_0}$ and $H\big( (2k+1)M + 2k, t\big) =  \alpha(M) = \overline{x_0}$, this is a based homotopy of based loops.
\end{proof}

The next step is to show $\beta$ and the standard cover $\widehat{\rho_{2k+1} \circ \alpha}$ (cf.~\thmref{thm: path odd subdivision map}) are homotopic.  We first give a careful description of
$\widehat{\rho_{2k+1} \circ \alpha}$.
As observed following \eqref{eq: beta}, when restricted to the subinterval $[\overline{i}, \overline{i+1}]$, we have $\beta = \overline{\gamma_i}\cdot \gamma_{i+1}$.
Our strategy is to view $\beta$ and $\widehat{\rho_{2k+1} \circ \alpha}$ piecewise, over the subintervals $[\overline{i}, \overline{i+1}]$ of $S(I_M, 2k+1)$, and define a homotopy from one to the other on each of these subintervals separately.

Now the definition of $\widehat{\rho_{2k+1} \circ \alpha}$ on each subinterval $[\overline{i}, \overline{i+1}]$ depends on both the values $\rho_{2k+1} \circ \alpha(i) \in X$ and $\rho_{2k+1} \circ \alpha(i+1) \in X$.
We augment the notation leading up to \lemref{lem: homotopy alpha-rho beta} as follows:

\emph{Notation (leading up to \lemref{lem: cover rho-alpha})}:
For any $i \in I_M$ with $0 \leq i \leq M-1$, suppose  that $\rho_{2k+1} \circ \alpha(i) = \mathbf{x_i} = (x_1, \ldots, x_n) \in X$ and $\rho_{2k+1} \circ \alpha(i+1) = \mathbf{x_{i+1}} = (x'_1, \ldots, x'_n) \in X$, so that  $\alpha(i+1) \in S( \mathbf{x_{i+1}}, 2k+1)$.  Then we have
$$\alpha(i) = \big( (2k+1)x_1 + r_1, \ldots, (2k+1)x_n + r_n \big),$$
and
$$\alpha(i+1)  = \big( (2k+1)x'_1 + r'_1, \ldots, (2k+1)x'_n + r'_n \big)$$
for some $r_j$ and $r'_j$ with $0 \leq r_j, r'_j \leq 2k$ for each $j = 1, \ldots, n$.
The centres of $S( \mathbf{x_i}, 2k+1)$ and $S( \mathbf{x_{i+1}}, 2k+1)$ are
$$\overline{ \mathbf{x_i}}  = \big( (2k+1)x_1 + k, \ldots, (2k+1)x_n + k \big)$$
and
$$\overline{ \mathbf{x_{i+1}}}  = \big( (2k+1)x'_1 + k, \ldots, (2k+1)x'_n + k \big)$$
respectively.  In the following result, and in the sequel, we denote the $j$th coordinate of
$\widehat{\rho_{2k+1} \circ \alpha}(t)$, for any $t \in S(I_M, 2k+1)$, by
$$\widehat{\rho_{2k+1} \circ \alpha}(s)_j,$$
for $j = 1, \ldots, n$.

\begin{lemma}\label{lem: cover rho-alpha}
Suppose $\alpha\colon I_M \to S(X, 2k+1)$ is a based loop in $S(X, 2k+1)$, for  $X \subseteq \Z^n$ any based digital image, and $\widehat{\rho_{2k+1} \circ \alpha}\colon S(I_M, 2k+1) \to S(X, 2k+1)$ is the standard cover of   $\rho_{2k+1} \circ \alpha\colon I_M \to X$ as in \thmref{thm: path odd subdivision map}.
\begin{itemize}
\item[(1)] For $s \in [0, \overline{0}] \sqcup [\overline{M}, \overline{M} + 2k] \subseteq S(I_M, 2k+1)$, we have
$$\widehat{\rho_{2k+1} \circ \alpha}(s) = \overline{\mathbf{x_0}}.$$
\item[(2)]  For $i \in I_M$ with $0 \leq i \leq M-1$, write $[\overline{i}, \overline{i+1}]  = \{ \overline{i} + s \mid 0 \leq s \leq 2k+1\}$.  When restricted to $[\overline{i}, \overline{i+1}]$,  we may write the $j$th coordinate of the standard cover as
$$\widehat{\rho_{2k+1} \circ \alpha}(\overline{i} + s )_j = \begin{cases}
(2k+1)x_j + C^{k-s}\big( k + k(x'_j - x_j)\big) & 0 \leq s \leq k\\
(2k+1)x'_j + C^{s-(k+1)}\big( k - k(x'_j - x_j)\big) & k+1 \leq s \leq 2k+1.\end{cases}$$
\end{itemize}
\end{lemma}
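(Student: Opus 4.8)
The plan is to unwind the construction of the standard cover from \thmref{thm: path odd subdivision map} into the explicit coordinate formulas claimed here. Recall that $\widehat{\rho_{2k+1}\circ\alpha}$ is assembled piecewise along the decomposition $S(I_M, 2k+1) = [0,\overline 0] \cup \bigcup_{i=0}^{M-1}[\overline i, \overline{i+1}] \cup [\overline M, \overline M + 2k]$, and that on each subinterval the cover depends only on the endpoint data $\mathbf{x_i} = \rho_{2k+1}\circ\alpha(i)$ and $\mathbf{x_{i+1}} = \rho_{2k+1}\circ\alpha(i+1)$ in $X$. Since $\alpha$ and $\rho_{2k+1}$ are continuous we have $\mathbf{x_i}\sim_X\mathbf{x_{i+1}}$, hence $x'_j - x_j \in \{-1,0,1\}$ for every coordinate $j = 1, \ldots, n$; this three-way dichotomy drives the whole computation. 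Throughout I will use that $C$ is non-expansive, $|C(r) - C(r')| \leq |r - r'|$, and that $C^k(r) = k$ for every $r \in I_{2k}$.

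Part (1) is read off directly from the construction: on the two end segments the cover is the constant path at the centre of the relevant block, and since $\alpha$ is based we have $\rho_{2k+1}\circ\alpha(0) = \rho_{2k+1}\circ\alpha(M) = \mathbf{x_0}$, so that centre is $\overline{\mathbf{x_0}}$.

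For part (2) I would evaluate the right-hand side coordinatewise in the three cases. If $x'_j = x_j$, then $k \pm k(x'_j - x_j) = k$ and, as $C(k) = k$, the $j$th coordinate equals $(2k+1)x_j + k$ for all $s$, i.e.\ it is pinned at the $j$th centre coordinate. If $x'_j = x_j + 1$, then from \eqref{eq: function C} one computes $C^p(2k) = 2k - p$ and $C^p(0) = p$ for $0 \leq p \leq k$, so the $j$th coordinate is $(2k+1)x_j + (k + s)$ on $0 \leq s \leq k$ and $(2k+1)x'_j + (s - k - 1)$ on $k+1 \leq s \leq 2k+1$; these increase by exactly one at each step and run monotonically from the $j$th coordinate of $\overline{\mathbf{x_i}}$ to that of $\overline{\mathbf{x_{i+1}}}$. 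The case $x'_j = x_j - 1$ is symmetric. From these formulas I would then verify that: the value at $s = 0$ is $\overline{\mathbf{x_i}}$ and the value at $s = 2k+1$ is $\overline{\mathbf{x_{i+1}}}$, so the formulas on consecutive subintervals agree at the shared centres $\overline i$ and, together with part (1), assemble into a single function on $S(I_M, 2k+1)$; consecutive values of $s$ change each coordinate by at most $1$, and the values at $s = k$ and $s = k+1$ differ by at most $1$ in every coordinate, so the two branches of the formula paste into a function preserving the adjacency relation of $S(X, 2k+1)$ (in which each coordinate may change by at most $1$); and applying $\rho_{2k+1}$ to the formula returns $\mathbf{x_i}$ for $0 \leq s \leq k$ and $\mathbf{x_{i+1}}$ for $k+1 \leq s \leq 2k+1$, which is precisely the covering identity $\rho_{2k+1}\circ\widehat{\rho_{2k+1}\circ\alpha} = (\rho_{2k+1}\circ\alpha)\circ\rho_{2k+1}$ restricted to $[\overline i, \overline{i+1}]$. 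Having matched the formula against the construction and its normalizing properties, I would conclude that it equals $\widehat{\rho_{2k+1}\circ\alpha}$ on each subinterval, which is the assertion.

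The main obstacle is the bookkeeping: one must correctly identify, for each $s$ in $[\overline i, \overline{i+1}]$, which block of $S(X, 2k+1)$ the cover lands in — the seam sits at $s = k$, $s = k+1$, straddling the boundary between the blocks over $i$ and over $i+1$ — and track the iterated coordinate-centring $C^p$ across these ranges, all while reconciling this explicit description with the form the construction takes in \thmref{thm: path odd subdivision map}. The continuity and covering verifications themselves are routine once one has the closed forms for $C^p(0)$, $C^p(k)$, and $C^p(2k)$ in hand and uses $|x'_j - x_j| \leq 1$.
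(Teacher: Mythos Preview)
Your case-by-case evaluation of the claimed formula is correct and is precisely what the paper does. The gap is in how you draw the conclusion. Verifying that the formula is continuous, satisfies the covering identity $\rho_{2k+1}\circ(\text{formula}) = (\rho_{2k+1}\circ\alpha)\circ\rho_{2k+1}$, and agrees at the centres $\overline{i}$ does \emph{not} by itself force it to equal the standard cover: $\widehat{\rho_{2k+1}\circ\alpha}$ is a \emph{specific} construction, and \thmref{thm: path odd subdivision map} carries no uniqueness statement for covers. Many paths from $\overline{\mathbf{x_i}}$ to $\overline{\mathbf{x_{i+1}}}$ in $S(X,2k+1)$ would pass those tests.

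The fix is already in your hands. The explicit formula for the standard cover, recorded in \eqref{eq: coordinate standard cover}, is
\[
\widehat{\rho_{2k+1}\circ\alpha}(\overline{i}+s)_j = (2k+1)x_j + k + s(x'_j - x_j), \qquad 0\le s\le 2k+1.
\]
Your computations show that the claimed right-hand side evaluates to $(2k+1)x_j + k$, to $(2k+1)x_j + k + s$, or to $(2k+1)x_j + k - s$, according as $x'_j - x_j$ is $0$, $+1$, or $-1$. These are exactly the three specializations of the linear formula above, so the two expressions agree on the nose in every case. That direct comparison is the paper's argument and is what your proof should say; the continuity and covering checks are then redundant and can be dropped.
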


\begin{proof}
The first item is part of the construction of the standard cover.  Now consider the second item, over a subinterval $[\overline{i}, \overline{i+1}]$.   From the construction of the standard cover, we have
$$\widehat{\rho_{2k+1} \circ \alpha}(\overline{i} + s )_j = (2k+1)x_j + k + s[x'_j - x_j]  \text{ for } 0 \leq s \leq 2k+1.$$
For $k+1 \leq s \leq 2k+1$, we may write
$$\begin{aligned}\widehat{\rho_{2k+1} \circ \alpha}(\overline{i} + s )_j &= (2k+1)x_j + k + s[x'_j - x_j] \\
&=(2k+1)x'_j - (2k+1)x'_j + k + (2k+1)x_j  + sx'_j - sx_j \\
&= (2k+1)x'_j + k -\big( (2k+1) - s\big)[ x'_j  -x_j].\end{aligned}$$
Thus, we have
$$\widehat{\rho_{2k+1} \circ \alpha}(\overline{i} + s )_j = \begin{cases}
(2k+1)x_j + k + s(x'_j - x_j) & 0 \leq s \leq k\\
(2k+1)x'_j + k -\big( (2k+1) - s\big)[ x'_j  -x_j] & k+1 \leq s \leq 2k+1.\end{cases}$$
In the notation above, we have $\rho_{2k+1} \circ \alpha(i) = \mathbf{x_i}$ and $\rho_{2k+1} \circ \alpha(i+1) = \mathbf{x_{i+1}}$.  Now
$\rho_{2k+1} \circ \alpha$ is continuous, and so we have $\mathbf{x_{i+1}} \sim_X \mathbf{x_i}$.  Therefore, $|x'_j - x_j| \leq 1$ for $j = 1, \ldots, n$.  We divide and conquer, considering $x'_j - x_j = 0$, $x'_j - x_j = +1$, and $x'_j - x_j= -1$ for each $j$ separately.

If $x'_j - x_j =0$, then the $j$th coordinate of the standard cover reduces to
\begin{equation}\label{eq: x' = x cover}
\widehat{\rho_{2k+1} \circ \alpha}(\overline{i} + s )_j = (2k+1)x_j + k  \text{ for } 0 \leq s \leq 2k+1.
\end{equation}
Item (2) of the enunciation (with $x'_j = x_j$) reduces to
\begin{equation}\label{eq: x' = x alt cover}
\widehat{\rho_{2k+1} \circ \alpha}(\overline{i} + s )_j = \begin{cases}
(2k+1)x_j + C^{k-s}( k) & 0 \leq s \leq k\\
(2k+1)x_j + C^{s-(k+1)}( k) & k+1 \leq s \leq 2k+1.\end{cases}
\end{equation}
But \eqref{eq: x' = x cover} and \eqref{eq: x' = x alt cover} agree, since we have $C^{k-s}( k) = k$ for each $s = 0, \ldots, k$ and  $C^{s-(k+1)}( k) = k$ for each $s = k+1, \ldots, 2k+1$.
Notice that this case includes the case in which we have  $\rho_{2k+1} \circ \alpha(i) = \rho_{2k+1} \circ \alpha(i+1) = \mathbf{x_i}$, where we have
$$\widehat{\rho_{2k+1} \circ \alpha}(s) = \overline{\mathbf{x_i}}$$
for each $s \in [\overline{i}, \overline{i+1}]$.

Next, suppose  that we have $x'_j - x_j =+1$.  Then the $j$th coordinate of the standard cover reduces to
\begin{equation}\label{eq: x' - x=1 cover}
\widehat{\rho_{2k+1} \circ \alpha}(\overline{i} + s )_j = (2k+1)x_j + k +s \text{ for } 0 \leq s \leq 2k+1.
\end{equation}
Item (2) of the enunciation (with $x'_j = x_j + 1$) reduces to
\begin{equation}\label{eq: x' - x=1 alt cover}
\widehat{\rho_{2k+1} \circ \alpha}(\overline{i} + s )_j = \begin{cases}
(2k+1)x_j + C^{k-s}( 2k) & 0 \leq s \leq k\\
(2k+1)(x_j + 1)+C^{s-(k+1)}( 0) & k+1 \leq s \leq 2k+1.\end{cases}
\end{equation}
Now $C^{k-s}(2k) = 2k - (k-s) = k+s$ for $s=0, \dots, k$, whereas $C^{s-(k+1)}( 0) = 0+\big( s-(k+1) \big) = s-(k+1)$ for $s=k+1, \dots, 2k+1$.  Hence the second term of \eqref{eq: x' - x=1 alt cover} also gives $(2k+1)x_j  + k+s$ for $s=k+1, \dots, 2k+1$, and \eqref{eq: x' - x=1 cover} agrees with \eqref{eq: x' - x=1 alt cover} on $[\overline{i}, \overline{i+1}]$.

Finally, suppose we have  $x'_j - x_j =-1$.  Then the $j$th coordinate of the standard cover reduces to
\begin{equation}\label{eq: x' - x=-1 cover}
\widehat{\rho_{2k+1} \circ \alpha}(\overline{i} + s )_j = (2k+1)x_j + k -s \text{ for } 0 \leq s \leq 2k+1.
\end{equation}
Item (2) of the enunciation (with $x'_j = x_j - 1$) reduces to
\begin{equation}\label{eq: x' - x=-1 alt cover}
\widehat{\rho_{2k+1} \circ \alpha}(\overline{i} + s )_j = \begin{cases}
(2k+1)x_j + C^{k-s}( 0) & 0 \leq s \leq k\\
(2k+1)(x_j - 1)+C^{s-(k+1)}( 2k) & k+1 \leq s \leq 2k+1.\end{cases}
\end{equation}
Here, we have $C^{k-s}(0) = 0 + (k-s) = k-s$ for $s=0, \dots, k$, and $C^{s-(k+1)}( 2k) = 2k-\big( s-(k+1) \big) = (2k+1) + k-s$ for $s=k+1, \dots, 2k+1$. Here, the second term of \eqref{eq: x' - x=1 alt cover} then gives $(2k+1)x_j +k-s$ for $s=k+1, \dots, 2k+1$, and in this last case also, \eqref{eq: x' - x=-1 cover} agrees with \eqref{eq: x' - x=-1 alt cover} on $[\overline{i}, \overline{i+1}]$.
\end{proof}

\begin{lemma}\label{lem: homotopy cover-rho-alpha beta}
Suppose $\alpha\colon I_M \to S(X, 2k+1)$ is a based loop in $S(X, 2k+1)$, for  $X \subseteq \Z^n$ any based digital image.  With
$\beta\colon S(I_M, 2k+1) \to S(X, 2k+1)$ the based loop defined as in \eqref{eq: beta} above, and $\widehat{\rho_{2k+1} \circ \alpha}\colon S(I_M, 2k+1) \to S(X, 2k+1)$ the standard cover of   $\rho_{2k+1} \circ \alpha\colon I_M \to X$ as in \thmref{thm: path odd subdivision map},
we have a based homotopy of based loops
$$\beta \approx \widehat{\rho_{2k+1} \circ \alpha}  \colon S(I_M, 2k+1) \to S(X, 2k+1).$$
\end{lemma}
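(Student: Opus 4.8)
The plan is to construct the homotopy one subinterval of $S(I_M,2k+1)$ at a time. Recall the decomposition $S(I_M,2k+1)=[0,\overline{0}]\cup\bigcup_{i=0}^{M-1}[\overline{i},\overline{i+1}]\cup[\overline{M},(2k+1)M+2k]$ along the centres $\overline{i}=(2k+1)i+k$. By the definition of $\beta$ in \eqref{eq: beta} and by \lemref{lem: cover rho-alpha}(1), both $\beta$ and $\widehat{\rho_{2k+1}\circ\alpha}$ are constant equal to $\overline{\mathbf{x_0}}$ on the two end subintervals, and both take the value $\overline{\mathbf{x_i}}$ at each centre $\overline{i}$ (for $\beta$ since $\gamma_i$ ends at $\overline{\mathbf{x_i}}$; for the cover by \lemref{lem: cover rho-alpha}(2), as $C^{k}$ sends every element of $I_{2k}$ to $k$). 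Hence it suffices to produce, for each $i$ with $0\le i\le M-1$, a homotopy $H_i\colon[\overline{i},\overline{i+1}]\times I_R\to S(X,2k+1)$ from $\beta|_{[\overline{i},\overline{i+1}]}$ to $\widehat{\rho_{2k+1}\circ\alpha}|_{[\overline{i},\overline{i+1}]}$ relative the endpoints $\overline{i}\mapsto\overline{\mathbf{x_i}}$ and $\overline{i+1}\mapsto\overline{\mathbf{x_{i+1}}}$, all of a common length $R$ (lengthening the shorter ones exactly as at the start of the proof of part (a) of \lemref{lem: loop product basics}). Together with the constant homotopies on the two end subintervals, the $H_i$ then assemble to a homotopy $H$ on $S(I_M,2k+1)\times I_R$, continuity across each seam $\{\overline{i}\}\times I_R$ being immediate because $H$ is constant there---the same mechanism used in the proofs of \lemref{lem: inverses} and \thmref{thm: indep basept}. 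Since $H$ restricts to the constant-$\overline{\mathbf{x_0}}$ homotopy on the ends, it is a based homotopy of based loops.

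To build $H_i$, write $\mathbf{x_i}=(x_1,\dots,x_n)$ and $\mathbf{x_{i+1}}=(x'_1,\dots,x'_n)$ (adjacent in $X$, so $|x'_j-x_j|\le 1$), and $\alpha(i)=\big((2k+1)x_j+r_j\big)_j$, $\alpha(i+1)=\big((2k+1)x'_j+r'_j\big)_j$ with $r_j,r'_j\in I_{2k}$. Comparing \eqref{eq: beta}--\eqref{eq: gamma} with \lemref{lem: cover rho-alpha}(2) coordinatewise, one finds two regimes. In a coordinate $j$ with $x'_j\ne x_j$, adjacency of $\alpha(i)$ and $\alpha(i+1)$ forces $r_j,r'_j\in\{0,2k\}$, and then both $\beta|_{[\overline{i},\overline{i+1}]}$ and $\widehat{\rho_{2k+1}\circ\alpha}|_{[\overline{i},\overline{i+1}]}$ have $j$th coordinate $(2k+1)x_j+k+\varepsilon_j s$ with $\varepsilon_j=x'_j-x_j$, so they already coincide in that coordinate. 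In a coordinate $j$ with $x'_j=x_j$, the cover's $j$th coordinate is the constant $(2k+1)x_j+k$, while $\beta$'s is $(2k+1)x_j+f_j(s)$, where $f_j\colon[0,2k+1]\to I_{2k}$ has $f_j(0)=f_j(2k+1)=k$, equals $C^{k-s}(r_j)$ for $0\le s\le k$ and $C^{s-(k+1)}(r'_j)$ for $k+1\le s\le 2k+1$, and deviates from $k$ by at most $k$. We therefore define $H_i$ to be constant in the homotopy variable in every coordinate $j$ with $x'_j\ne x_j$ (equal to the common value just described), and in every coordinate $j$ with $x'_j=x_j$ to be $H_i(\overline{i}+s,t)_j=(2k+1)x_j+G_j(s,t)$ for a homotopy $G_j\colon[0,2k+1]\times I_R\to I_{2k}$, relative $s\in\{0,2k+1\}$, from $f_j$ to the constant path at $k$. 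Every coordinate value of $H_i(\overline{i}+s,t)$ then lies in $(2k+1)x_j+I_{2k}$ or $(2k+1)x'_j+I_{2k}$---in fact the point lies in $S(\mathbf{x_i},2k+1)$ for $s\le k$ and in $S(\mathbf{x_{i+1}},2k+1)$ for $s\ge k+1$---so $H_i$ maps into $S(X,2k+1)$, and continuity of $H_i$ against the product adjacency reduces, coordinatewise, to the known continuity of $\beta$ and to the continuity of the individual $G_j$.

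It remains to flatten the bump, i.e. to produce $G_j$. Here $f_j$ is the concatenation $\overline{\gamma_{i,j}}\cdot\gamma_{i+1,j}$ of the reverse of the coordinate-centring path $\gamma_{i,j}(t)=C^{t}(r_j)$ and the coordinate-centring path $\gamma_{i+1,j}(t)=C^{t}(r'_j)$; since $r_j\sim_{I_{2k}}r'_j$ and $C$---hence $C^{t}$---is $1$-Lipschitz, these two paths are pointwise adjacent, and each stabilises at $k$. We flatten $f_j$ to the constant $k$ by progressively enlarging an interior plateau at the value $k$, using a homotopy of the same shape as the one in the proof of part (c) of \lemref{lem: loop product basics} (which deforms $\alpha\circ\rho_k$ to $(\alpha\circ\rho_{k-1})\cdot C_M$): for increasing $t$ one holds at level $C^{t}$ on a central window that widens with $t$, and runs $\overline{\gamma_{i,j}}$, respectively $\gamma_{i+1,j}$, on the shrinking left, respectively right, part. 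Checking that $G_j$ is genuinely continuous---not merely separately continuous in $s$ and in $t$---is the technical heart of the matter, and is carried out by the same region-by-region adjacency bookkeeping as in the proofs of \lemref{lem: inverses} and part (c) of \lemref{lem: loop product basics}; the delicate point is the junction, near $s=k$, between the $\overline{\gamma_{i,j}}$ part and the $\gamma_{i+1,j}$ part, where one must exploit that $\gamma_{i,j}$ and $\gamma_{i+1,j}$ are pointwise adjacent (a naive profile depending only on $s+t$ would fail continuity, just as in part (c) of \lemref{lem: loop product basics}). Granting the $G_j$, the homotopies $H_i$ are as required and assemble as in the first paragraph, proving the lemma.
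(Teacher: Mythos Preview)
Your proposal is correct and follows essentially the same approach as the paper: decompose $S(I_M,2k+1)$ along the centres, work coordinatewise on each $[\overline{i},\overline{i+1}]$, observe that in coordinates with $x'_j\ne x_j$ the two loops already agree, and in coordinates with $x'_j=x_j$ flatten the bump $f_j$ to the constant $k$ via a widening central plateau at level $C^t$. The paper carries out exactly this plan with a uniform homotopy length $I_k$ (so no lengthening step is needed) and makes explicit the three sub-cases $r'_j=r_j$, $C(r_j)=r'_j$, and $C(r'_j)=r_j$ that arise at the junction near $s=k$; what you call ``the delicate point'' is precisely the content of the paper's Cases~(ii)(B) and~(ii)(C), where the plateau homotopy of Case~(ii)(A) is shifted by one step on the appropriate half of $[\overline{i},\overline{i+1}]$.
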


\begin{proof}  We will construct a based homotopy of based loops
$$G \colon S(I_M, 2k+1) \times I_k\to S(X, 2k+1)$$
from $\beta$ to $\widehat{\rho_{2k+1} \circ \alpha}$.
As discussed in the proof of \lemref{lem: homotopy alpha-rho beta}, we have
$\beta(s) = \overline{\mathbf{x_0}}$ for (at least)  $s\in [0, \overline{0}]$ and  $s \in [(2k+1)M, (2k+1)M+2k]$.  Also, the standard cover is defined as
$\widehat{\rho_{2k+1} \circ \alpha}(s) = \overline{\mathbf{x_0}}$ on the same subintervals.
So we set $G$ to be the constant map $G(s, t) =  \overline{\mathbf{x_0}}$ on $[0, \overline{0}]\times I_k$ and  $[\overline{M}, \overline{M}+k]\times I_k$.

Refer to the notation leading up to \lemref{lem: homotopy alpha-rho beta}.
We have
$$\widehat{\rho_{2k+1} \circ \alpha}(\overline{i}) = \overline{  \rho_{2k+1} \circ \alpha(i)   } = \overline{  \mathbf{x_i}  }$$
for each centre $\overline{i} \in S(I_M, 2k+1)$.  From \eqref{eq: beta}, $\beta$ also has the property that
$$\beta(\overline{i}) = \gamma_i(k) = \overline{  \mathbf{x_i}  }.$$
For each $i \in I_M$ with $0 \leq i \leq M-1$, we will define a homotopy
$$G\colon [\overline{i}, \overline{i+1}] \times I_k \to S(X, 2k+1)$$
\emph{relative the endpoints}, which is to say that we will have $G(\overline{i}, t) = \beta( \overline{i}) = \widehat{\rho_{2k+1} \circ \alpha}( \overline{i}) = \overline{  \mathbf{x_i}  }$ for each $i \in I_M$ with $0 \leq i \leq M-1$ and each $t \in I_k$, from the restriction of $\beta$ to the restriction of $\widehat{\rho_{2k+1} \circ \alpha}$.
 Together with the constant homotopies  on $[0, \overline{0}] \times I_k$ and $[\overline{M}, (2k+1)M + 2k]\times I_k$, these will piece together continuously to give the desired homotopy.

Refer to the notation leading up to \lemref{lem: cover rho-alpha}.
From  \eqref{eq: gamma} and \eqref{eq: beta},
we have the $j$th coordinate function of $\beta$ described as
\begin{equation}\label{eq: beta coordinates}
\beta(\overline{i}+s)_j = \begin{cases}
(2k+1)x_j + C^{k-s} (r_j) & 0 \leq s \leq k\\
(2k+1)x'_j + C^{s-(k+1)} (r'_j) & k+1 \leq s \leq 2k+1.
\end{cases}
\end{equation}
Item (2) of \lemref{lem: cover rho-alpha} gives our description of the $j$th coordinate of the standard cover that we use.
Now $\alpha$ is continuous, so we have $\alpha(i) \sim_{S(X, 2k+1)} \alpha(i+1)$.  This means that in each coordinate we have
\begin{equation}\label{eq: x-r relation}
| (2k+1)x'_j + r'_j -((2k+1)x_j + r_j)| \leq 1
\end{equation}
for each $j$.  Also, because $\mathbf{x_{i+1}} \sim_X \mathbf{x_{i}}$, we have $|x'_j - x_j| \leq 1$ for each $j$.

We will work with each coordinate separately, proceeding differently according as  $x'_j - x_j = 0$ or not.  We will define $G\colon [\overline{i}, \overline{i+1}]\times I_{k}\to S(X, 2k+1)$ in terms of its coordinate functions, thus:
$$G(s, t) = \big( G_1(s, t), \ldots, G_n(s, t)\big),$$
for $(s, t) \in [\overline{i}, \overline{i+1}]\times I_{k}$.  In fact, we will construct the $G_j$ so that they map
$$G_j\colon [\overline{i}, (2k+1)i+ 2k]\times I_{k}\to S(x_j, 2k+1) \subseteq \Z,$$
and
$$G_j\colon [(2k+1)i+ 2k+1, \overline{i+1}]\times I_{k}\to S(x'_j, 2k+1) \subseteq \Z.$$
Then the coordinate functions $G_j$ will assemble into a homotopy $G$ that maps
$$G\colon [\overline{i}, (2k+1)i+ 2k]\times I_{k}\to S(\mathbf{x_{i}}, 2k+1) \subseteq S(X, 2k+1)$$
and
$$G\colon [(2k+1)i+ 2k+1, \overline{i+1}]\times I_{k}\to S(\mathbf{x_{i+1}}, 2k+1) \subseteq S(X, 2k+1).$$
Thus, $G$ will be continuous on $[\overline{i}, \overline{i+1}]\times I_{k}$ if and only if each of its coordinate functions is.

We treat the cases in which $x'_j - x_j = \pm1$ and $x'_j - x_j = 0$ separately.

\bigskip

\noindent\textbf{Case (i) $\mathbf{x'_j - x_j = \pm1}$:}   Suppose that in the $j$th coordinates of $\mathbf{x_{i}}$ and $\mathbf{x_{i+1}}$ we have
$x'_j - x_j = +1$.  Then \eqref{eq: x-r relation} implies that we have  $| (2k+1) + r'_j -r_j| \leq 1$ which, given that $0 \leq r_j, r'_j \leq 2k$, implies that $r_j = 2k$ and $r'_j = 0$.
Then the formulas for the $j$th coordinates of $\beta$ and of $\widehat{\rho_{2k+1} \circ \alpha}$ agree.  Namely, because we have $x'_j - x_j = +1$, item (2) of \lemref{lem: cover rho-alpha} reduces to
\begin{equation}\label{eq: cover x'j - xj = pm1}
\widehat{\rho_{2k+1} \circ \alpha}(\overline{i} + s )_j
= \begin{cases}
(2k+1)x_j + C^{k-s} (2k) & 0 \leq s \leq k\\
(2k+1)x'_j + C^{s-(k+1)} (0) & k+1 \leq s \leq 2k+1.
\end{cases}
\end{equation}
But with $r_j = 2k$ and $r'_j = 0$, this is exactly the formula for  $\beta(\overline{i}+s)_j$ from  \eqref{eq: beta coordinates}.  On the other hand,
suppose that we have  $x'_j - x_j = -1$.   Then \eqref{eq: x-r relation} implies that we have $| (2k+1) - (r'_j -r_j)| \leq 1$ which, given that $0 \leq r_j, r'_j \leq 2k$, implies that $r_j = 0$ and $r'_j = 2k$.  In a similar way, item (2) of \lemref{lem: cover rho-alpha}  and  \eqref{eq: beta coordinates} reduce to the same formula here, too.  So, in the case in which
$x'_j - x_j = \pm1$, we have
$$
\widehat{\rho_{2k+1} \circ \alpha}(\overline{i} + s )_j
= \beta(\overline{i}+s)_j,$$
for each $s$ with $0 \leq s \leq 2k+1$.  In this case, then, we define $G_j(\overline{i}+s, t)$  independently of $t$ as
\begin{equation}\label{eq: beta homotopy A}
G_j(\overline{i}+s, t)  = \widehat{\rho_{2k+1} \circ \alpha}(\overline{i} + s )_j = \beta(\overline{i} + s )_j
\end{equation}
for $0 \leq t \leq k$.  The value for each $s$ is given by \eqref{eq: cover x'j - xj = pm1}.  Continuity in the $j$th coordinate in this case follows from the continuity of $\beta$ (or of
$\widehat{\rho_{2k+1} \circ \alpha}$).  Indeed, suppose we have $(\overline{i} + s, t) \sim (\overline{i} + s', t')$ in $[\overline{i}, \overline{i+1}]\times I_{k}$.  Then, in particular, we have $\overline{i} + s \sim \overline{i} + s'$ in $[\overline{i}, \overline{i+1}]$.    Since $G_j(\overline{i} + s, t)$ is independent of $t$, we have $|G_j(\overline{i} + s', t') - G_j(\overline{i} + s, t)| = |\beta(\overline{i} + s')_j - \beta(\overline{i} + s)_j| \leq 1$, since $\beta$ is continuous.

\bigskip

\noindent\textbf{Case (ii) $\mathbf{x'_j - x_j = 0}$:}
If $x'_j - x_j = 0$, then item (2) of \lemref{lem: cover rho-alpha} (with $x'_j = x_j$) gives  the $j$th coordinate of  $\widehat{\rho_{2k+1} \circ \alpha}(\overline{i} + s)$ as $(2k+1)x_j + k$ for each $s$ (recall that we have $C^p(k) = k$ for all $p \geq 0$).   Also, if $x'_j - x_j = 0$, then \eqref{eq: x-r relation} implies that we have $|r'_j -r_j| \leq 1$.
  Unfortunately, describing a suitable homotopy in this case involves a further divide-and-conquer, according as how $r'_j$ and $r_j$ compare with each other.

\medskip

\textbf{Case (ii) (A) $\mathbf{x'_j - x_j = 0}$ and $\mathbf{r'_j = r_j}$:}  If  $r'_j = r_j$, then we have $C^p(r'_j) = C^p(r_j)$ for $p \geq 0$.  Then set
\begin{equation}\label{eq: beta homotopy iiA}
G_j(\overline{i}+s, t) = \begin{cases}
(2k+1)x_j + C^{k-s} (r_j) & 0 \leq s < k-t\\
(2k+1)x_j + C^{t} (r_j) & k-t \leq s \leq k+1+t\\
(2k+1)x_j + C^{s-(k+1)} (r_j) & k+1+t < s \leq 2k+1.\end{cases}
\end{equation}
When $t=0$, this formula reduces to that for $\beta(\overline{i}+s)$ from \eqref{eq: beta coordinates} (with $x'_j = x_j$ and $r'_j = r_j$).
When $t = k$, this formula reduces to
$$G_j(\overline{i}+s, k)  =   (2k+1)x_j + k $$
for $0 \leq s \leq 2k+1$, since $C^p(r_j) = k$ for $p \geq k$.  Namely, it reduces to the formula for $\widehat{\rho_{2k+1} \circ \alpha}(\overline{i} + s)_j$ from  item (2) of \lemref{lem: cover rho-alpha} (with $x'_j = x_j$).

We now check continuity of \eqref{eq: beta homotopy iiA}; this follows an argument very similar to that used for \lemref{lem: inverses}.
To check that $G_j$ is continuous,  divide $[\overline{i}, \overline{i+1}] \times I_{k}$ into two overlapping regions:  Region $A$, consisting of those points $(\overline{i}+s, t)$  that satisfy both $s+t \geq k$ and $s-t \leq k+1$; and region $B$, consisting of the points that satisfy either $s+t \leq k+1$ or $s-t \geq k$.  Now any pair of adjacent points $(\overline{i}+s, t)\sim (\overline{i}+s', t')$ in $[\overline{i}, \overline{i+1}] \times I_{k}$ satisfies $|(s-t) - (s'-t')| \leq 2$ and $|(s+t) - (s'+t')| \leq 2$, and hence either both lie in region $A$ or both lie in region $B$.  Suppose first that both lie in region
$A$.  From \eqref{eq: beta homotopy iiA}, for these points we have $G_j(s, t) =  C^t(r_j)$ and $G_j(s', t') = C^{t'}(r_j)$.  Since $(\overline{i}+s, t)\sim (\overline{i}+s', t')$, we have $|t' - t| \leq 1$, and hence $|C^{t'}(r_j) - C^{t}(r_j)| \leq 1$.  Thus we have
$| G_j(\overline{i}+s', t') - G_j(\overline{i}+s, t)| \leq 1$ when both adjacent points lie in $A$.  On the other hand, suppose adjacent points $(\overline{i}+s, t) \sim (\overline{i}+s', t')$ both lie in region $B$.
First suppose that $s+t \leq k+1$ and $s'+t' \leq k+1$ (the left-hand half of region $B$).  Here,  \eqref{eq: beta homotopy iiA} specializes to
$$G_j(\overline{i}+s, t) = \begin{cases} (2k+1)x_j +  C^{k-s}(r_j) & s +t \leq k-1\\
(2k+1)x_j +  C^{t}(r_j) & k \leq s +t \leq k+1.\end{cases}$$
If we have  $s+t \leq k-1$ and $s'+t' \leq k-1$, then we have $| G_j(\overline{i}+s', t') - G_j(\overline{i}+s, t)| = |C^{k-s'}(r_j) - C^{k-s}(r_j)| \leq 1$, since we have $|(k-s') - (k-s)| = |s - s'| \leq 1$.   If we have $k \leq s+t \leq  k+1$ and $k \leq s'+t' \leq k+1$, then we have $G_j(\overline{i}+s, t) =  C^{t}(r_j)$ and $G_j(\overline{i}+s', t') =  C^{t'}(r_j)$, and again we have $| G_j(\overline{i}+s', t') - G_j(\overline{i}+s, t)| \leq 1$ since $|t' - t| \leq 1$.  Finally, for adjacent points in the left-hand part of region B, suppose that we have $(s, t)$ with $s+t \leq k-1$ and $(s', t')$ with $k \leq s'+t' \leq k+1$.   Now this is only possible if we also have $s' \geq  s$ which, assuming we have $(\overline{i}+s, t)\sim (\overline{i}+s', t')$, means that we have $s' = s$ or $s' = s+1$.  Then $| G_j(\overline{i}+s', t') - G_j(\overline{i}+s, t)| = |C^{k-s}(r_j) - C^{t'}(r_j)|$.  If $s' = s$, then the inequality $k \leq s'+t' \leq k+1$ yields $k-s \leq t' \leq k-s+1$, and if $s' = s+1$, then the same inequality yields $k-s-1 \leq t' \leq k-s$.  In either case we have $|t' - (k-s)| \leq 1$, and thus $| G_j(\overline{i}+s', t') - G_j(\overline{i}+s, t)|  \leq 1$.    For the other remaining possibility, when  $k \leq s+t \leq k+1$ and $s'+t' \leq  k-1$, we find that $| G_j(\overline{i}+s', t') - G_j(\overline{i}+s, t)| \leq 1$ follows by interchanging the roles of $(s, t)$ and $(s', t')$ in the last few steps.  It remains to confirm that $G_j$ preserves adjacency for adjacent points $(\overline{i}+s, t)\sim (\overline{i}+s', t')$  that satisfy  $s-t \geq k$ and $s'-t' \geq k$ (the right-hand half of region $B$).  The same argument, \emph{mutatis mutandis}, as we just used for the left-hand half of region $B$ will confirm that we have $| G_j(\overline{i}+s', t') - G_j(\overline{i}+s, t)| \leq 1$ here also.  We omit the details.  This completes the check of continuity for $G_j$ on  $[\overline{i}, \overline{i+1}]\times I_k$ in this sub-case.

\medskip

\textbf{Case (ii) (B) $\mathbf{x'_j - x_j = 0}$ and $k\leq r'_j < r_j$ or $r_j < r'_j \leq k$:}
Since $|r'_j -r_j| \leq 1$ (going back to the start of discussion for Case (ii)), we have $C(r_j) = r'_j$ here (and hence, $C^{p+1}(r_j) = C^p(r'_j)$ for $p \geq 0$).
Roughly speaking, in this case our homotopy is the same as the previous case on the left half of $[\overline{i}, \overline{i+1}] \times I_k$ but, on the right half, we pause at the start for one unit of time, to allow the values $G(k, t)_j$ to ``catch-up" with those of $G(k+1, t)_j$, as it were.  This effect may be achieved as follows.  We denote the homotopy in this sub-case by $G^B_j(\overline{i}+s, t)$, for $(s, t) \in [0, 2k+1]\times I_k$, and define it in terms of the homotopy  $G_j(\overline{i}+s, t)$   from \eqref{eq: beta homotopy iiA} of the previous sub-case.  Here, because we must have $k\leq r'_j < r_j \leq 2k$ or $0 \leq r_j < r'_j \leq k$, it follows that we have $C^{k-1}(r'_j) = C^k(r'_j) = k$, and thus, referring to \eqref{eq: beta coordinates} (with $x'_j = x_j$), we have
$$\beta(\overline{i}+s)_j =  (2k+1)x_j + k =  \widehat{\rho_{2k+1} \circ \alpha}(\overline{i} + s)_j,$$
(at least) for $s = 2k, 2k+1$ (recall that in this case, in which we have $x'_j = x_j$,  item (2) of \lemref{lem: cover rho-alpha}  gives  the $j$th coordinate of  $\widehat{\rho_{2k+1} \circ \alpha}(\overline{i} + s)$ as $(2k+1)x_j + k$ for each $s$).  Therefore, we may take the homotopy here to be constant for $s = 2k$ and $s= 2k+1$, and not just relative the endpoint $s = 2k+1$.  We define
\begin{equation}\label{eq: beta homotopy iiB1}
G^B_j(\overline{i}+s, t) = \begin{cases}
G_j(\overline{i}+s, t) & 0 \leq s \leq k\\
G_j(\overline{i}+s+1, t) & k+1 \leq s \leq 2k\\
G_j(\overline{i}+2k+1, t) =  \beta(\overline{i}+2k+1)_j& s = 2k+1,\end{cases}
\end{equation}
where $G_j(\overline{i}+s, t)$  is given by \eqref{eq: beta homotopy iiA}.
Then for $t=0$, referring to \eqref{eq: beta homotopy iiA}, this yields
$$G^B_j(\overline{i}+s, 0) = \begin{cases}
(2k+1)x_j + C^{k-s}(r_j) & 0 \leq s < k\\
(2k+1)x_j + C^{0}(r_j) & s = k\\
(2k+1)x_j + C^{(k+1)-(s+1)}(r_j) & k+1 \leq s \leq 2 k\\
(2k+1)x_j + k & s = 2 k+1,
\end{cases}
$$
which may be re-written as
$$G^B_j(\overline{i}+s, 0) = \begin{cases}
(2k+1)x_j + C^{k-s}(r_j) & 0 \leq s \leq k\\
(2k+1)x_j + C^{k-(s+1)}(r'_j) & k+1 \leq s \leq 2 k+1,
\end{cases}
$$
and thereby recognized as agreeing with $\beta(\overline{i}+s)_j$ from \eqref{eq: beta coordinates} (with $x'_j = x_j$).  A similar direct check confirms  that  we have
$G^B_j(\overline{i}+s, k) = \widehat{\rho_{2k+1} \circ \alpha}(\overline{i} + s) = (2k+1)x_j + k$ for each $s$.
Since $G_j$ is continuous, from the previous sub-case, it follows that this $G^B_j$ is continuous on the separate parts of its domain $[\overline{i}, \overline{i}+k] \times I_k$,
$[\overline{i}+k+1, \overline{i}+2k] \times I_k$, and $\{\overline{i}+2k+1\} \times I_k$.  Where the latter two abut, the homotopy is a constant map and it is evident that $G^B_j$ is continuous on the whole of   $[\overline{i}+k+1, \overline{i}+2k+1] \times I_K$.   Then, for $(s, t) \in \{\overline{i}+k, \overline{i}+k+1\} \times I_k$, \eqref{eq: beta homotopy iiB1}(and \eqref{eq: beta homotopy iiA})  gives
$$G^B_j(\overline{i}+k, t) = G_j(\overline{i}+k, t) = (2k+1)x_j + C^{t}(r_j)$$
and
$$G^B_j(\overline{i}+k+1, t) = G_j(\overline{i}+k+2, t) = \begin{cases}
(2k+1)x_j + C^{1}(r_j) & t=0\\
(2k+1)x_j + C^{t}(r_j) & 1 \leq t \leq k.
\end{cases}
$$
Now, if we have $(s, t) \sim (s', t')$ in $\{\overline{i}+k, \overline{i}+k+1\} \times I_k$, then either $\{t, t'\} \subseteq \{0, 1\}$, or $|t' - t| \leq 1$ and $1\leq t, t' \leq k$.  If  $\{t, t'\} \subseteq \{0, 1\}$, then
$$ \{ G^B_j(\overline{i}+s, t),  G^B_j(\overline{i}+s', t') \} \subseteq \{ (2k+1)x_j + C^{1}(r_j), (2k+1)x_j + C^{0}(r_j) \},$$
and since $|(2k+1)x_j + C^{1}(r_j)-\big( (2k+1)x_j + C^{0}(r_j)\big)|  = |C(r_j)-r_j| \leq 1$, it follows that we have  $| G^B_j(\overline{i}+s, t)-  G^B_j(\overline{i}+s', t')| \leq 1$.  But if
$1\leq t, t' \leq k$, then we have
$$| G^B_j(\overline{i}+s, t)-  G^B_j(\overline{i}+s', t')| = | C^{t}(r_j) - C^{t'}(r_j)| \leq 1,$$
because we have $|t' - t| \leq 1$.  It follows that $G^B_j$ is continuous when restricted to $\{\overline{i}+k, \overline{i}+k+1\} \times I_k$, and this is now sufficient to conclude that $G^B_j$ is continuous over the whole of $[ \overline{i}, \overline{i+1}] \times I_k$.

\medskip

\textbf{Case (ii) (C) $\mathbf{x'_j - x_j = 0}$ and $k\leq r_j < r'_j$ or $r'_j < r_j \leq k$:}
This last case needs a variation on Case (ii) (A) similar to that which we just made for Case (ii) (B).
Since $|r'_j -r_j| \leq 1$ (going back to the start of discussion for Case (ii)), we have $C(r'_j) = r_j$ here (and hence, $C^{p+1}(r'_j) = C^p(r_j)$ for $p \geq 0$).
Here, our homotopy is the same as in Case (ii) (A) on the right half of $[\overline{i}, \overline{i+1}] \times I_k$ but, on the left half, we pause at the start for one unit of time, to allow the values $G(k+1, t)_j$ to ``catch-up" with those of $G(k, t)_j$, as it were.  We denote the homotopy in this sub-case by $G^C_j(\overline{i}+s, t)$, for $(s, t) \in [0, 2k+1]\times I_k$, and again define it in terms of the homotopy  $G_j(\overline{i}+s, t)$   from \eqref{eq: beta homotopy iiA}.  Here, because we must have $k\leq r_j < r'_j \leq 2k$ or $0 \leq r'_j < r_j \leq k$, it follows that we have $C^{k-1}(r_j) = C^k(r_j) = k$, and thus, referring to \eqref{eq: beta coordinates} (with $x'_j = x_j$), we have
$$\beta(\overline{i}+s)_j =  (2k+1)x_j + k =  \widehat{\rho_{2k+1} \circ \alpha}(\overline{i} + s)_j,$$
(at least) for $s = 0, 1$ (recall again that when we have $x'_j = x_j$,  item (2) of \lemref{lem: cover rho-alpha}  gives  the $j$th coordinate of  $\widehat{\rho_{2k+1} \circ \alpha}(\overline{i} + s)$ as $(2k+1)x_j + k$ for each $s$).  Therefore, we may take the homotopy here to be constant for $s = 0$ and $s= 1$, and not just relative the endpoint $s = 0$.
Finally, for set-up, we need to use $r'_j$ in place of $r_j$ in \eqref{eq: beta homotopy iiA} (in that sub-case, $r_j$ and $r'_j$ were equal, but generally $r_j$ pertains to the left half of
$[\overline{i}, \overline{i+1}] \times I_k$ and $r'_j$ to the right half, and it is on the right half that we wish to preserve the homotopy here).  That is, we could equally well write the homotopy of  \eqref{eq: beta homotopy iiA} as

\begin{equation}\label{eq: alt beta homotopy iiA}
G'_j(\overline{i}+s, t) = \begin{cases}
(2k+1)x_j + C^{k-s} (r'_j) & 0 \leq s < k-t\\
(2k+1)x_j + C^{t} (r'_j) & k-t \leq s \leq k+1+t\\
(2k+1)x_j + C^{s-(k+1)} (r'_j) & k+1+t < s \leq 2k+1.\end{cases}
\end{equation}
Then, with reference to \eqref{eq: alt beta homotopy iiA},  define here
\begin{equation}\label{eq: beta homotopy iiC}
G^C_j(\overline{i}+s, t) = \begin{cases}
G'_j(\overline{i}, t) =  \beta(\overline{i})_j& s = 0\\
G'_j(\overline{i}+s-1, t) & 1 \leq s \leq k\\
G'_j(\overline{i}+s, t) & k+1 \leq s \leq 2k+1.
\end{cases}
\end{equation}
As in the previous Case (ii) (B), a direct check shows that this is a homotopy from the $j$th coordinate of the restriction of $\beta$ to the $j$th coordinate of the restriction of
$\widehat{\rho_{2k+1} \circ \alpha}$.  Continuity on  $[ \overline{i}, \overline{i+1}] \times I_k$ follows from that of $G'_j$ (which, recall, is identical with $G_j$ in Case (ii) (A)).  We omit these details.

Across Case (i) and Cases (ii) (A), (B), and (C) of the last several pages, then, we have defined coordinate homotopies $G_j$ for each $j = 1, \ldots, n$. As discussed before we entered Case (i) above,  these coordinate homotopies assemble into a homotopy
$$G(\overline{i} + s, t) = \big( G_1(\overline{i} + s, t),  \ldots, G_n(\overline{i} + s, t)\big)  \colon [ \overline{i}, \overline{i+1}] \times I_k \to S(X, 2k+1),$$
from $\beta$ restricted to  $[ \overline{i}, \overline{i+1}]$ to
$\widehat{\rho_{2k+1} \circ \alpha}$ restricted to  $[ \overline{i}, \overline{i+1}]$.
By checking each of the formulas \eqref{eq: beta homotopy A}, \eqref{eq: beta homotopy iiA}, \eqref{eq: beta homotopy iiB1},  and \eqref{eq: beta homotopy iiC} when $s=0$ and $s = 2k+1$, we see that, in each case in each coordinate, the homotopy is relative the endpoints.  Indeed, we have (refer to the notation leading up to \lemref{lem: cover rho-alpha})
$$G(\overline{i}, t) = \overline{\mathbf{x_i}} \quad \text{ and } \quad  G(\overline{i}+2k+1, t) = \overline{\mathbf{x_{i+1}}},$$
for all $t \in I_k$.  Because $G$ is continuous  on each  $[ \overline{i}, \overline{i+1}] \times I_k$, and is well-defined where these intervals overlap, these homotopies---as well as the constant homotopies on $[ 0, \overline{0}] \times I_k$ and $[ \overline{M}, \overline{M}+ k] \times I_k$ as we defined them at the start of this proof---assemble into a based homotopy of based loops
$G\colon S(I_M, k) \times I_k \to S(X, 2k+1)$ from $\beta$ to   $\widehat{\rho_{2k+1} \circ \alpha}$.
\end{proof}

Finally, we arrive at the technical result we relied upon to establish injectivity of $(\rho_{2k+1})_*$ in the proof of  \thmref{thm: rho induces iso on pi1}.

\begin{corollary}\label{cor: homotopy alpha-rho cover}
Suppose $\alpha\colon I_M \to S(X, 2k+1)$ is a based loop in $S(X, 2k+1)$, for  $X \subseteq \Z^n$ any based digital image.  With $\widehat{\rho_{2k+1} \circ \alpha}\colon S(I_M, 2k+1) \to S(X, 2k+1)$ the standard cover of   $\rho_{2k+1} \circ \alpha\colon I_M \to X$, we have a based homotopy of based loops
$$\widehat{\rho_{2k+1} \circ \alpha} \approx \alpha\circ \rho_{2k+1}\colon S(I_M, 2k+1) \to S(X, 2k+1).$$
\end{corollary}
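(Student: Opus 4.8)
The plan is to obtain this immediately from the two preceding lemmas by transitivity of based homotopy of based loops. Indeed, \lemref{lem: homotopy alpha-rho beta} supplies a based homotopy of based loops $\beta \approx \alpha\circ \rho_{2k+1}$, and \lemref{lem: homotopy cover-rho-alpha beta} supplies a based homotopy of based loops $\beta \approx \widehat{\rho_{2k+1} \circ \alpha}$, where in both statements $\beta\colon S(I_M, 2k+1)\to S(X,2k+1)$ is the loop defined in \eqref{eq: beta}. All three loops share the common domain $S(I_M, 2k+1) = I_{(2k+1)M+2k}$ and the common basepoint $\overline{\mathbf{x_0}}$, so no reparametrization of the spatial variable is needed to compare them.

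First I would record that ``based homotopic as based loops'' is symmetric and transitive. For symmetry, if $H\colon I_P \times I_Q \to S(X, 2k+1)$ is a based homotopy of based loops from $\gamma$ to $\delta$, then $H'(s,t) = H(s, Q-t)$ is one from $\delta$ to $\gamma$. For transitivity, given based homotopies of based loops $H\colon I_P \times I_{Q_1} \to S(X, 2k+1)$ from $\gamma$ to $\delta$ and $G\colon I_P \times I_{Q_2} \to S(X, 2k+1)$ from $\delta$ to $\epsilon$, one lengthens the shorter of the two in the $t$-direction exactly as in the proof of part (a) of \lemref{lem: loop product basics} so that $Q_1 = Q_2 = Q$, and then concatenates them in the $t$-direction to get a homotopy $I_P \times I_{2Q} \to S(X, 2k+1)$. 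Continuity across the seam $I_P \times \{Q\}$ is immediate since both formulas there take the value $\delta$, and the conditions $H(0,t) = H(P,t) = \overline{\mathbf{x_0}}$ persist, so the result is again a based homotopy of based loops, now from $\gamma$ to $\epsilon$.

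Applying symmetry to \lemref{lem: homotopy cover-rho-alpha beta} gives $\widehat{\rho_{2k+1} \circ \alpha} \approx \beta$, and applying transitivity with \lemref{lem: homotopy alpha-rho beta} then gives $\widehat{\rho_{2k+1} \circ \alpha} \approx \alpha\circ \rho_{2k+1}$, which is the assertion. There is no genuine obstacle at this stage: the substantive work has already been carried out in establishing the two lemmas (and, upstream, in the construction of $\beta$ and in \lemref{lem: inverses}), so the only point requiring any care is the bookkeeping that all three loops have matching domains and basepoints, which is clear from their definitions.
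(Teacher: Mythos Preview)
Your proposal is correct and takes essentially the same approach as the paper's own proof: both derive the result from \lemref{lem: homotopy alpha-rho beta} and \lemref{lem: homotopy cover-rho-alpha beta} by symmetry and transitivity of based homotopy of based loops. The paper, noting that it has not previously argued these properties, writes down the explicit concatenated homotopy $\mathcal{H}(s,t) = H(s,k-t)$ for $0\le t\le k$ and $\mathcal{H}(s,t) = G(s,t-(k+1))$ for $k+1\le t\le 2k+1$ and checks continuity at the seam; your sketch of reversing in $t$ and gluing in the $t$-direction amounts to the same construction.
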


\begin{proof}
The ingredients are represented in the following diagram:
$$\xymatrix{ S(I_M, 2k+1) \ar[r]^-{ \widehat{\rho_{2k+1} \circ \alpha} }  \ar[d]_{\rho_{2k+1}} & S(X, 2k+1) \ar[d]^{\rho_{2k+1}} \\
I_M \ar[ru]_{\alpha} \ar[r]_-{\rho_{2k+1} \circ \alpha} & X.}$$
The lower-right triangle commutes tautologically; the outer rectangle commutes from the construction of the standard cover, as in \thmref{thm: path odd subdivision map}.
The assertion here is that the upper-left triangle commutes up to based homotopy (generally it does not commute).

The homotopy we want follows directly from \lemref{lem: homotopy alpha-rho beta} and \lemref{lem: homotopy cover-rho-alpha beta}, assuming symmetricity and transitivity of based homotopy of based loops. Since we have not given  arguments for these points, we provide an explicit argument here.   \lemref{lem: homotopy alpha-rho beta} constructs a based homotopy of based loops
$$H\colon S(I_M, 2k+1) \times I_M \to S(X, 2k+1)$$
from $\beta$ to $\alpha\circ \rho_{2k+1}$, and \lemref{lem: homotopy cover-rho-alpha beta} constructs a based homotopy of based loops
$$G \colon S(I_M, 2k+1) \times I_k\to S(X, 2k+1)$$
from $\beta$ to $\widehat{\rho_{2k+1} \circ \alpha}$. Define a map $\mathcal{H}\colon S(I_M, 2k+1) \times I_{2k+1} \to S(X, 2k+1)$ by
$$\mathcal{H}(s, t) = \begin{cases}  H(s, k-t) & 0 \leq t \leq k\\
G\big(s, t-(k+1)\big) & k+1 \leq t \leq 2k+1.\end{cases}$$
The only possible issue with continuity of $\mathcal{H}$ occurs where the two halves of the domain abut, namely, where $t = k$ and $t=k+1$.  But here, we have
$$\mathcal{H}(s, k) =  H(s, 0) = \beta(s) = G\big(s, 0\big) =  \mathcal{H}(s, k+1).$$
Therefore, if we have $(s, t) \sim (s', t')$ in $S(I_M, 2k+1) \times [k, k+1]$, then
$$|\mathcal{H}(s', t') - \mathcal{H}(s, t)| =  |\beta(s') - \beta(s)| \leq 1,$$
since we have $|s' - s| \leq 1$ and $\beta$ is continuous.  Now any pair of adjacent points in  $S(I_M, 2k+1) \times I_{2k+1}$  both lie in one of the sub-rectangles   $S(I_M, 2k+1) \times [0, k]$, $S(I_M, 2k+1) \times [k+1, 2k+1]$, or $S(I_M, 2k+1) \times [k, k+1]$, and it follows that $\mathcal{H}$ is continuous on the whole of $S(I_M, 2k+1) \times I_{2k+1}$.
A direct check confirms that $\mathcal{H}(s, 0) = \alpha\circ \rho_{2k+1}(s)$ and $\mathcal{H}(s, 2k+1) = (\widehat{\rho_{2k+1} \circ \alpha})(s)$, and also that
$\mathcal{H}(0, t) = \overline{\mathbf{x_0}} =  \mathcal{H}((2k+1)M+2k, t)$, so that $\mathcal{H}$ is the desired based homotopy of based loops.
\end{proof}

\section{Based maps and homotopies}\label{Appx: technical}

Here we present some basic material on maps and homotopies in the based setting. As we pointed out in the Introduction, whereas \cite{LOS19a, LOS19b} are concerned with  un-based maps and homotopies,  we need based versions of all definitions and results.  
Whilst some of the items given here extend or amplify those reviewed in \secref{sec: basics}, they are nonetheless useful in the development of ideas in the main body.  We refer to this appendix from numerous points in the main body, as well as from the material in \appref{sec: technical}.  But including all this material in the main body would slow the progression of ideas there.   So we have elected to collect it here, rather than disperse it throughout the main body.      

Also in this appendix, and for the convenience of the reader, we give the statements of two results from \cite{LOS19b} that are used in some of our main results.

\subsection{Products and Homotopy}

We record a number of elementary observations that are used, implicitly or explicitly, in the main body.

\begin{applemma}\label{lem: product}
For based digital images $(X, x_0)$ and $(Y, y_0)$, the projections onto either factor $p_1\colon X \times Y \to X$ and $p_2\colon X \times Y \to Y$ are based maps.  Suppose given based maps of digital images $f\colon (A, a_0) \to (X, x_0)$ and $g\colon (A, a_0) \to (Y, y_0)$.  Then there is a unique based map, which we write $(f, g)\colon (A, a_0) \to \big(X \times Y, (x_0, y_0)\big)$ that satisfies $p_1\circ (f, g) = f$ and $p_2\circ (f, g) = g$.
\end{applemma}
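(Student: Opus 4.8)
The plan is to verify directly that the assignment $a \mapsto \big(f(a), g(a)\big)$ is the required based map, and that it is the unique one; everything reduces to unwinding the definitions. First I would dispose of the projections. For $(x, y) \in X \times Y$ we have $p_1(x, y) = x$ and $p_2(x, y) = y$, so $p_1(x_0, y_0) = x_0$ and $p_2(x_0, y_0) = y_0$, showing both are based. Continuity of $p_1$ and $p_2$ is immediate from \defref{def: products}: if $(x, y) \sim_{X \times Y} (x', y')$, then by definition $x \sim_X x'$ and $y \sim_Y y'$, which says exactly that $p_1$ and $p_2$ preserve adjacency.

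Next I would define $(f, g)\colon A \to X \times Y$ by $(f, g)(a) = \big(f(a), g(a)\big)$. That this is based is clear: $(f, g)(a_0) = \big(f(a_0), g(a_0)\big) = (x_0, y_0)$ since $f$ and $g$ are based. For continuity, suppose $a \sim_A a'$; then $f(a) \sim_X f(a')$ and $g(a) \sim_Y g(a')$ because $f$ and $g$ are continuous, and hence $\big(f(a), g(a)\big) \sim_{X \times Y} \big(f(a'), g(a')\big)$ by the product adjacency relation of \defref{def: products}. The factorization identities $p_1 \circ (f, g) = f$ and $p_2 \circ (f, g) = g$ then follow by evaluating at an arbitrary $a \in A$. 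For uniqueness, I would observe that any based map $h\colon A \to X \times Y$ with $p_1 \circ h = f$ and $p_2 \circ h = g$ satisfies $h(a) = \big(p_1(h(a)), p_2(h(a))\big) = \big(f(a), g(a)\big) = (f, g)(a)$ for every $a \in A$, so $h = (f, g)$.

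There is no genuine obstacle here — the statement is a routine verification. The only point worth flagging is that the argument depends on our convention (\defref{def: products}) that adjacency in $X \times Y$ means adjacency in \emph{both} coordinates; this is precisely what makes both the projections and the pairing map automatically continuous. For the ``graph product'' adjacency of \remref{rem: graph product homotopy}, the pairing map need not be continuous, so the universal property would fail in that setting, and it is worth keeping this contrast in mind since the lemma is used repeatedly in the main body (e.g.\ to see that $\alpha \times \mathrm{id}_{I_N}$ is continuous).
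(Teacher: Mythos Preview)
Your proof is correct and follows essentially the same approach as the paper, which simply defines $(f,g)(a) = \big(f(a), g(a)\big)$ and declares continuity, basedness, and uniqueness to be immediate from the definitions. You have merely unpacked those claims explicitly and added a helpful remark contrasting with the graph-product adjacency.
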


\begin{proof}
The first assertion follows immediately from the definitions.   The map $(f, g)$ is defined as $(f, g)(a) = \big( f(a), g(a) \big)$.  It is immediate from the definitions that this map is continuous and based.  This is evidently the unique map with the suitable coordinate functions.
\end{proof}

Because of the rectangular nature of the digital setting, it is often convenient to consider the \emph{product of maps}, as follows.

\begin{appdefinition}\label{def: map product}
Given functions of digital images $f_i \colon X_i \to Y_i$ for $i = 1, \ldots, n$, we define their \emph{product} function
$$f_1 \times \cdots \times f_n \colon X_1 \times \cdots \times  X_n \to Y_1 \times \cdots \times Y_n$$
as $(f_1 \times \cdots \times f_n) (x_1, \ldots, x_n) = \big(f_1(x_1), \ldots, f_n(x_n) \big)$.
\end{appdefinition}

\begin{applemma}\label{lem: map product conts}
Given based maps of digital images $f_i \colon X_i \to Y_i$ for $i = 1, \ldots, n$, their product $f_1 \times \cdots \times f_n$ is a (continuous) based map.
\end{applemma}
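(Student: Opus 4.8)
The plan is to reduce everything to the coordinatewise description of adjacency in a product recorded in \defref{def: products}. First I would recall that, for digital images $X_i \subseteq \Z^{m_i}$, a point of $X_1 \times \cdots \times X_n \subseteq \Z^{m_1 + \cdots + m_n}$ is an $n$-tuple $(x_1, \ldots, x_n)$ with $x_i \in X_i$, and that two such tuples $(x_1, \ldots, x_n)$ and $(x_1', \ldots, x_n')$ are adjacent precisely when $x_i \sim_{X_i} x_i'$ for every $i$. For $n \geq 3$ this follows by iterating \defref{def: products} and noting that the identification $\Z^{m_1} \times \cdots \times \Z^{m_n} \cong \Z^{m_1 + \cdots + m_n}$ carries the iterated product adjacency to the coordinatewise one, which is forced by the definition of adjacency in $\Z^N$ as a coordinatewise condition.

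Next I would check continuity directly. Suppose $(x_1, \ldots, x_n) \sim (x_1', \ldots, x_n')$ in $X_1 \times \cdots \times X_n$. By the previous paragraph $x_i \sim_{X_i} x_i'$ for each $i$, and since each $f_i$ is continuous this gives $f_i(x_i) \sim_{Y_i} f_i(x_i')$ for each $i$. Applying the coordinatewise description once more, now in $Y_1 \times \cdots \times Y_n$, yields
$$\big(f_1(x_1), \ldots, f_n(x_n)\big) \sim_{Y_1 \times \cdots \times Y_n} \big(f_1(x_1'), \ldots, f_n(x_n')\big),$$
which is exactly $(f_1 \times \cdots \times f_n)(x_1, \ldots, x_n) \sim (f_1 \times \cdots \times f_n)(x_1', \ldots, x_n')$. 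Hence the product map is continuous. For the based condition I would simply evaluate at the basepoint: the basepoint of $X_1 \times \cdots \times X_n$ is the tuple of basepoints $\big((x_0)_1, \ldots, (x_0)_n\big)$, and since each $f_i$ is based, $f_1 \times \cdots \times f_n$ sends this to $\big((y_0)_1, \ldots, (y_0)_n\big)$, the basepoint of $Y_1 \times \cdots \times Y_n$.

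Alternatively, one could argue by induction on $n$, using \lemref{lem: product} together with the $n = 2$ case; the inductive step again only invokes that adjacency in a product is a coordinatewise condition. There is no real obstacle here: the entire content is the observation that the product adjacency relation is defined coordinatewise, so that both continuity and the basepoint condition are inherited from the factors factor by factor. The one point that warrants a sentence of care is the identification of an iterated product of lattices with a single lattice and the compatibility of the two adjacency relations under it, and even that is immediate from the coordinatewise nature of adjacency in $\Z^N$.
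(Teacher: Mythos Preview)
Your proof is correct and is exactly the detailed verification that the paper's one-line proof (``This follows directly from the definitions'') is gesturing at. You have simply unpacked the coordinatewise check of continuity and the basepoint condition that the paper leaves to the reader.
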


\begin{proof}
This follows directly from the definitions.
\end{proof}

We defined based homotopy in \defref{def: Based Homotopy}. 

\begin{applemma}\label{lem:composition and based homotopy}
Suppose based maps $f, f'\colon X \to Y$ of based digital images $X \subseteq \Z^m$ and  $Y \subseteq \Z^n$ are based-homotopic, and also based maps $g, g'\colon Y \to Z$ of based digital images with  $Z \subseteq \Z^p$ are based-homotopic.    Then $g\circ f, g'\circ f'\colon X \to Z$ are based-homotopic.
\end{applemma}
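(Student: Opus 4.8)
\textbf{Proof proposal for \applemmaref{lem:composition and based homotopy}.}

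The plan is to reduce the claim to two separate assertions and combine them by transitivity of based homotopy. First, I would observe that it suffices to prove (i) that $f \approx f'$ implies $g\circ f \approx g\circ f'$ for a fixed based map $g$, and (ii) that $g \approx g'$ implies $g\circ f' \approx g'\circ f'$ for a fixed based map $f'$. Granting these, one chains $g\circ f \approx g\circ f' \approx g'\circ f'$. (One should note transitivity of based homotopy; this is routine---given homotopies of lengths $N_1$ and $N_2$, one concatenates in the $I_N$-variable to get a homotopy of length $N_1 + N_2$, after possibly extending the shorter one to equalize lengths exactly as in the proof of part (a) of \lemref{lem: loop product basics}---so I would simply cite that this is standard, or prove it in a line.)

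For assertion (i): let $H\colon X \times I_N \to Y$ be a based homotopy from $f$ to $f'$, so $H(x,0) = f(x)$, $H(x,N) = f'(x)$, and $H(x_0, t) = y_0$ for all $t$. Then I would form the composite $g \circ H \colon X \times I_N \to Z$. This is continuous as a composition of continuous maps, it satisfies $g\circ H(x,0) = g(f(x))$ and $g\circ H(x,N) = g(f'(x))$, and $g\circ H(x_0,t) = g(y_0) = z_0$ since $g$ is based. Hence $g\circ H$ is a based homotopy from $g\circ f$ to $g\circ f'$.

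For assertion (ii): let $K\colon Y \times I_N \to Z$ be a based homotopy from $g$ to $g'$. I would form the composite $K \circ (f' \times \mathrm{id}_{I_N}) \colon X \times I_N \to Y \times I_N \to Z$. Here $f' \times \mathrm{id}_{I_N}$ is a based map by \applemmaref{lem: map product conts}, so the composite is a (continuous) based map. It begins at $K(f'(x), 0) = g(f'(x))$, ends at $K(f'(x), N) = g'(f'(x))$, and sends $(x_0, t)$ to $K(f'(x_0), t) = K(y_0, t) = z_0$ since $K$ is based. So this is a based homotopy from $g\circ f'$ to $g'\circ f'$, completing the proof. The only mild subtlety---and the one place the ``jointly continuous'' flavor of our adjacency convention matters---is the use of \applemmaref{lem: map product conts} to know $f' \times \mathrm{id}_{I_N}$ is continuous; everything else is a formal composition argument, so there is no real obstacle here.
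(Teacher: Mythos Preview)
Your proof is correct and is essentially the same as the paper's. The paper constructs a single homotopy $P\colon X\times I_{N+M}\to Z$ by explicitly concatenating $g\circ H$ on $[0,N]$ with $G\circ(f'\times T)$ on $[N,N+M]$ (where $T$ is the translation $t\mapsto t-N$) and directly verifying continuity and basedness of the glued map; you instead isolate the two halves as separate based homotopies and invoke transitivity, but the underlying homotopies and the gluing argument are identical.
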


\begin{proof}
Suppose $H \colon X \times I_N \to Y$ is a based homotopy from $f$ to $f'$ and $G \colon Y \times I_M \to Z$ is a based homotopy from $g$ to $g'$.
Define $P \colon X \times I_{N+M} \to Z$ by
$$P(x, t) = \begin{cases}  g\circ H(x, t) & 0 \leq t \leq N\\ G\circ(f' \times T)(x, t) & N \leq t \leq N+M,\end{cases}$$
where $T \colon [N, N+M] \to I_M$ is the translation $T(t) = t_N$.   We check that $P$ defines a continuous map on $X \times I_{N+M}$.  For this, $T$ is clearly continuous and so
$f' \times T$ is  continuous by \lemref{lem: map product conts}.  As compositions of continuous maps, the two parts of $P$ are continuous on $X \times I_N$ and $X \times [N, N+M]$. They agree on their overlap $X \times \{N\}$, as is easily checked.  Furthermore, they piece together to give a continuous map.  This is because any two adjacent points $(x, t) \sim (x', t')$ of $X \times I_{N+M}$ must have $|t - t'| \leq 1$, and thus either both are in $X \times I_N$ or both are in $X \times [N, N+M]$.  From the continuity of the two parts, we have $P(x, t) \sim_Z P(x', t')$, so $P$ is continuous on $X \times I_{N+M}$.   Now $P$ is a homotopy from $g\circ f$ to $g'\circ f'$, as is easily checked.  Furthermore, we have
$$P(x_0, t) = \begin{cases}  g\circ H(x_0, t) = g(y_0) = z_0 & 0 \leq t \leq N\\  G\big(f'(x_0), T(t)\big) = G\big(y_0, T(t)\big) = z_0& N \leq t \leq N+M,\end{cases}$$
since $H$ and $G$ are both based homotopies.  Hence $P$ is a based homotopy from $g\circ f$ to $g'\circ f'$.
\end{proof}

We defined based homotopy of based loops in \defref{def: Based Homotopy of loops}.

\begin{applemma}\label{lem: composition based homotopy loops}
Suppose based loops $\alpha, \beta\colon I_L \to Y$ in a based digital image $Y \subseteq \Z^n$ are based homotopic as based loops, and based maps $g, g'\colon Y \to Z$ of based digital images with  $Z \subseteq \Z^p$ are based-homotopic.  Then the based loops $g\circ \alpha, g'\circ \beta\colon I_L \to Z$ are based homotopic as based loops in $Z$.  Furthermore, considering
$\alpha\circ \rho_k, \beta\circ \rho_k\colon S(I_L, k) \to Y$ as based loops $I_{kL+k-1} \to Y$ in $Y$, they are based homotopic as based loops in $Y$.
\end{applemma}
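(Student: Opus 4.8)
The plan is to reduce both assertions to constructions that have already appeared, observing only that the homotopies produced respect the second endpoint of a loop in addition to the basepoint.

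For the first assertion, fix a based homotopy of based loops $H\colon I_L\times I_N\to Y$ from $\alpha$ to $\beta$, so that $H(0,t)=H(L,t)=y_0$ for all $t\in I_N$, and a based homotopy $G\colon Y\times I_M\to Z$ from $g$ to $g'$. Since a based homotopy of based loops is in particular a based homotopy of maps $I_L\to Y$ (the basepoint of $I_L$ being $0$), \lemref{lem:composition and based homotopy} applies to the pairs $\alpha\approx\beta$ and $g\approx g'$ and yields a based homotopy $P\colon I_L\times I_{N+M}\to Z$ from $g\circ\alpha$ to $g'\circ\beta$; concretely, $P(s,t)=g\circ H(s,t)$ for $0\le t\le N$ and $P(s,t)=G\circ(\beta\times T)(s,t)$ for $N\le t\le N+M$, with $T\colon[N,N+M]\to I_M$ the translation $T(t)=t-N$. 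The continuity of $P$, and the facts that it starts at $g\circ\alpha$ and ends at $g'\circ\beta$, are part of \lemref{lem:composition and based homotopy}. It remains only to note that this $P$ is in fact a based homotopy of based \emph{loops}, namely that $P(L,t)=z_0$ for all $t$: for $0\le t\le N$ we have $P(L,t)=g\circ H(L,t)=g(y_0)=z_0$, and for $N\le t\le N+M$ we have $P(L,t)=G(\beta(L),T(t))=G(y_0,T(t))=z_0$, using that $\beta$ is a based loop and $G$ is a based homotopy.

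For the second assertion, with $H\colon I_L\times I_N\to Y$ a based homotopy of based loops from $\alpha$ to $\beta$ as above, consider the composite $H\circ(\rho_k\times\mathrm{id}_{I_N})\colon S(I_L,k)\times I_N=I_{kL+k-1}\times I_N\to Y$, which is continuous by \lemref{lem: map product conts}. It starts at $\alpha\circ\rho_k$ and ends at $\beta\circ\rho_k$. Moreover, since $\rho_k(0)=0$ and $\rho_k(kL+k-1)=L$, we have $H\circ(\rho_k\times\mathrm{id}_{I_N})(0,t)=H(0,t)=y_0$ and $H\circ(\rho_k\times\mathrm{id}_{I_N})(kL+k-1,t)=H(L,t)=y_0$ for each $t$, so this composite is a based homotopy of based loops from $\alpha\circ\rho_k$ to $\beta\circ\rho_k$, viewed as based loops $I_{kL+k-1}\to Y$.

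I do not expect any genuine obstacle. The construction underlying the first assertion is verbatim that of \lemref{lem:composition and based homotopy}, whose continuity argument---adjacent points of a product of intervals have time-coordinates differing by at most $1$, hence lie in a common subrectangle---is unaffected by the loop condition; the only added content is the two one-line checks that the second endpoint of the loop is preserved, and these reduce immediately to $\beta$ being a based loop and $G$, $H$ being based. The second assertion is entirely routine.
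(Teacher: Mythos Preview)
Your argument is correct and matches the paper's proof essentially verbatim: both assertions are handled as special cases of \lemref{lem:composition and based homotopy}, with the only additional content being the check that the second endpoint $s=L$ (respectively $s=kL+k-1$) is sent to the basepoint throughout the homotopy. Your write-up is if anything slightly more explicit than the paper's on the second assertion, where the paper leaves the endpoint verification as ``a direct check.''
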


\begin{proof}
These are basically two special cases of \lemref{lem:composition and based homotopy}; we just need to be sure that both ends of the loop are preserved through the homotopy.
For the first point, suppose $H \colon I_L \times I_N \to Y$ is a based homotopy of based loops from $\alpha$ to $\beta$ and $G \colon Y \times I_M \to Z$ is a based homotopy from $g$ to $g'$.
Define $P \colon I_L \times I_{N+M} \to Z$ as in the proof of \lemref{lem:composition and based homotopy} by
$$P(s, t) = \begin{cases}  g\circ H(s, t) & 0 \leq t \leq N\\ G\circ(\beta \times T)(x, t) & N \leq t \leq N+M,\end{cases}$$
Then $P$ is a (continuous)  homotopy from $g\circ \alpha$ to $g'\circ \beta$ that satisfies $P(0, t) = z_0$ for $t \in I_{N+M}$, by \lemref{lem:composition and based homotopy}.  In addition, here we have
$$P(L, t) = \begin{cases}  g\circ H(L, t) = g(y_0) = z_0 & 0 \leq t \leq N\\  G\big(\beta(L), T(t)\big) = G\big(y_0, T(t)\big) = z_0& N \leq t \leq N+M,\end{cases},$$
since $\beta$ is a based loop in $Y$ and $H$ is a based homotopy of based loops.  Thus, $P$ is a based homotopy of based loops.

For the second point, recall that $\rho_k\colon S(I_L, k) \to I_L$ satisfies $\rho_k(0) = 0$ and $\rho_k(kL+k-1) = L$.  A special case of the argument of  \lemref{lem:composition and based homotopy}, in which  $H$ is redundant, may be used here.  Namely, define  $P \colon S(I_L, k) \times I_{N} \to Y$  by
$$P(s, t) = G\circ(\rho_k \times \mathrm{id}_{I_N})(s, t),$$
where $G \colon I_L \times I_N \to Y$ is a based homotopy of based loops from $\alpha$ to $\beta$.  Continuity follows directly from \lemref{lem: map product conts} here. A direct check confirms that $P$ is a based homotopy of based loops from  $\alpha\circ \rho_k$ to $\beta\circ \rho_k$.
\end{proof}

\begin{appdefinition}[Based Homotopy Equivalence]\label{def: based h.e.}
Let $f \colon X \to Y$ be a based map of based digital  images.  If there is a based map $g \colon Y \to X$ such that $g\circ f \approx \text{id}_X$ and $f \circ g \approx \text{id}_Y$, then $f$ is a \emph{based homotopy equivalence}, and $X$ and $Y$ are said to be \emph{based homotopy equivalent}, or to have the same \emph{based homotopy type}.
\end{appdefinition}

Based homotopy equivalence is an equivalence relation on based digital images.  This may be shown with an argument identical to that used to show the topological  counterpart of this fact.  We leave the proof as an exercise.

\subsection{Subdivision}

Subdivision behaves well with respect to products.  For any digital images $X \subseteq \Z^m$ and $Y \subseteq \Z^n$ and any $k \geq 2$ we may identify
$$S(X \times Y, k) \cong S(X, k) \times S(Y, k)$$
and, furthermore, the standard projection $\rho_k\colon S(X \times Y, k) \to X \times Y$ may be identified with the product of the standard projections on $X$ and $Y$, thus:
$$\rho_k = \rho_k \times \rho_k\colon  S(X, k) \times S(Y, k) \to X \times Y.$$

The projection $\rho_k \colon S(X, k) \to X$ may be factored in various ways.  For example, if $k = pq$, then we may write
$$\rho_k = \rho_p\circ \rho_q \colon S(X, k) \to S(X, p) \to X.$$
A different sort of ``partial projection" that may also be used to factor $\rho_k$ is as follows.

\begin{appdefinition}\label{def: rho^c}
For any $x \in \Z$ and any  $k \geq 2$, recall that the subdivision $S(x, k)$ may be described as $S(x, k) = [kx, kx + k-1]$.    Then, for $k \geq 3$, define a function
$$\rho^c_{k} \colon S(x, k) \to S(x, k-1)$$
as
$$\rho^c_{k}(kx + j) = \begin{cases}  (k-1)x + j & 0 \leq j \leq  \lfloor k/2 \rfloor -1\\
(k-1)x + j-1 &  \lfloor k/2 \rfloor \leq j \leq  k-1.\end{cases}
$$
Next, for any $x = (x_1, \ldots, x_n) \in \Z^n$, with the identifications
$$S(x, k) = S(x_1, k) \times \cdots \times S(x_n, k)$$
and
$$S(x, k-1) = S(x_1, k-1) \times \cdots \times S(x_n, k-1),$$
in which each $S(x_i, k)$ and $S(x_i, k-1)$ are viewed as 1D,
define $\rho^c_{k} \colon S(x, k) \to S(x, k-1)$ as the product of functions
$$\rho^c_{k}\times \cdots \times \rho^c_{k} \colon S(x_1, k) \times \cdots \times S(x_n, k) \to  S(x_1, k-1) \times \cdots \times S(x_n, k-1).$$
Finally, for any digital image $X \subseteq \Z^n$, define
$$\rho^c_{k} \colon S(X, k) \to S(X, k-1)$$
by viewing each subdivision as a (disjoint) union
$$ S(X, k) = \coprod_{x \in X}\ S(x, k) \qquad \text{and} \qquad S(X, k-1) = \coprod_{x \in X}\ S(x, k-1)$$
and assembling a global  $\rho^c_{k}$ on $S(X, k)$ from the individual $\rho^c_{k} \colon S(x, k) \to S(x, k-1)$ as just defined.  In \cite{LOS19b}, we show that, for each $k\geq 3$,
the function $\rho^c_{k} \colon S(X, k) \to S(X, k-1)$ is continuous.
\end{appdefinition}

These partial projections are often useful, because they allow us to factor the projection $\rho_k\colon S(X, k) \to X$ as
$$\rho_k = \rho_{k-1}\circ \rho^c_k\colon S(X, k) \to S(X, k-1) \to X,$$
with $\rho_{k_1}\colon S(X, k-1) \to X$ the standard projection and $\rho^c_k\colon S(X, k) \to S(X, k-1)$ the map from \defref{def: rho^c}.  Indeed, we make crucial use of these partial projections in several results in the main body.

\subsection{Subdivision-Based Homotopy}

Recall from \defref{def:subdn basepoints} our conventions on basepoints vis-{\`a}-vis subdivision. Also, recall our notational convention that when $k=1$, $S(X, k) = S(X, 1) = X$ and $\rho_1\colon S(X, 1) \to X$ is just the identity map.

We defined subdivision-based homotopy of maps in \defref{def: subdn homotopic maps}.

\begin{appproposition}\label{prop: subdn-bsd homotopy is equiv}
Suppose we have based digital images $X \subseteq \Z^m$ and  $Y \subseteq \Z^n$. Consider the set
$$\mathcal{S} = \left\{ f\colon S(X, k) \to Y \mid f \text{ is a based map and } k \geq 1\right\}$$
of all based maps from any subdivision of $X$ to $Y$.
Subdivision-based homotopy is an equivalence relation on the set $\mathcal{S}$.
\end{appproposition}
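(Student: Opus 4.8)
The plan is to check the three properties of an equivalence relation directly from \defref{def: subdn homotopic maps}, after first recording two elementary facts about the relation $\approx$ of \defref{def: Based Homotopy} that are used implicitly but not stated above. First, $\approx$ is symmetric: reversing a based homotopy $H\colon X\times I_N\to Y$ via $\overline{H}(x,t)=H(x,N-t)$ gives a based homotopy, since $t\mapsto N-t$ is an automorphism of $I_N$ and $\overline{H}(x_0,t)=y_0$ for all $t$. Second, $\approx$ is transitive: given $H$ from $f$ to $f'$ of length $N$ and $H'$ from $f'$ to $f''$ of length $N'$, glue them into $P\colon X\times I_{N+N'}\to Y$ defined as $H$ on $\{t\le N\}$ and $H'(x,t-N)$ on $\{t\ge N\}$; continuity follows exactly as in the proof of \lemref{lem:composition and based homotopy}, since any pair of adjacent points of $X\times I_{N+N'}$ has $t$-coordinates differing by at most $1$, and $P(x_0,t)=y_0$ throughout. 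I would dispatch these at the outset.

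Reflexivity and symmetry are then immediate. For $f\colon S(X,k)\to Y$, taking $k'=l'=1$ (so $m=k$) and a constant homotopy gives $f\circ\rho_1\approx f\circ\rho_1$, so $f$ is subdivision-based homotopic to itself. If $f\colon S(X,k)\to Y$ and $g\colon S(X,l)\to Y$ satisfy $f\circ\rho_{k'}\approx g\circ\rho_{l'}$ with $kk'=ll'=m$, then symmetry of $\approx$ gives $g\circ\rho_{l'}\approx f\circ\rho_{k'}$, which is exactly the defining condition for $g$ subdivision-based homotopic to $f$.

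The substantive point is transitivity, and the key device is to push both witnessing homotopies up to a common subdivision. Suppose $f\colon S(X,k)\to Y$ is subdivision-based homotopic to $g\colon S(X,l)\to Y$ via a based homotopy $f\circ\rho_{k'}\approx g\circ\rho_{l'}\colon S(X,m)\to Y$ with $kk'=ll'=m$, and $g$ is subdivision-based homotopic to $h\colon S(X,p)\to Y$ via $g\circ\rho_{l''}\approx h\circ\rho_{p''}\colon S(X,m')\to Y$ with $ll''=pp''=m'$. Let $M$ be any common multiple of $m$ and $m'$. Since $k,l\mid m\mid M$ and $l,p\mid m'\mid M$, the integers $M/k$, $M/l$, $M/p$ are all well-defined. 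Composing the first homotopy with $\rho_{M/m}\times\mathrm{id}\colon S(X,M)\times I_N\to S(X,m)\times I_N$, which is continuous by \lemref{lem: map product conts} exactly as in the second part of \lemref{lem: composition based homotopy loops}, and using the identifications $S\big(S(X,a),b\big)\cong S(X,ab)$ together with $\rho_a\circ\rho_b=\rho_{ab}$ and $k'=m/k$, $l'=m/l$, I obtain $f\circ\rho_{M/k}\approx g\circ\rho_{M/l}\colon S(X,M)\to Y$. The same manipulation applied to the second homotopy (composed with $\rho_{M/m'}\times\mathrm{id}$, using $l''=m'/l$, $p''=m'/p$) yields $g\circ\rho_{M/l}\approx h\circ\rho_{M/p}\colon S(X,M)\to Y$. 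The two homotopies now share the common middle term $g\circ\rho_{M/l}$, so transitivity of $\approx$ gives $f\circ\rho_{M/k}\approx h\circ\rho_{M/p}\colon S(X,M)\to Y$; since $k(M/k)=M=p(M/p)$, this is precisely the statement that $f$ is subdivision-based homotopic to $h$.

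The only real obstacle is bookkeeping: arranging that the middle terms coincide after lifting both homotopies to level $S(X,M)$, and verifying that $M/k$, $M/l$, $M/p$ are genuine integers, which reduces to the divisibilities $k\mid m$, $l\mid m$, $l\mid m'$, $p\mid m'$ built into \defref{def: subdn homotopic maps}. Everything else---symmetry and transitivity of $\approx$, and stability of $\approx$ under precomposition with a standard projection on the domain---is routine and already implicit in \lemref{lem:composition and based homotopy} and \lemref{lem: composition based homotopy loops}.
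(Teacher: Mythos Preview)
Your proof is correct and follows essentially the same route as the paper: omit details for reflexivity and symmetry, and for transitivity lift both witnessing homotopies to a common subdivision level (the paper uses the specific level $S(X,ll'l'')$ where your $M$ would be $ll'l''$, while you allow any common multiple of $m$ and $m'$) and then concatenate them via transitivity of $\approx$. The only cosmetic difference is that you spell out symmetry and transitivity of $\approx$ up front, whereas the paper absorbs these into the ambient discussion and the argument of \lemref{lem:composition and based homotopy}.
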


\begin{proof}
The proofs of reflexivity and symmetry are more-or-less tautological from \defref{def: subdn homotopic maps}.  We omit their details.  For transitivity, we argue as follows.  Suppose we have based maps $f \colon S(X, k) \to Y$,  $g \colon S(X, l) \to Y$, and $h \colon S(X, m) \to Y$, with $f$ and $g$ subdivision-based homotopic, and $g$ and $h$ subdivision-based homotopic.  We wish to show that $f$ and $h$ are subdivision-based homotopic.

For $f$ and $g$ , we have $k', l'$ with $kk' = ll'$ and a based homotopy $H \colon S(X, kk')\times I_N = S(X, ll')\times I_N \to Y$ from
$f\circ \rho_{k'}$ to $g\circ \rho_{l'}$.  Likewise, for $g$ and $h$ , we have $l'', m'$ with $ll'' = mm'$ and a based homotopy $G \colon S(X, ll'')\times I_M = S(X, mm')\times I_M \to Y$ from
$g\circ \rho_{l''}$ to $h\circ \rho_{m'}$.  From the special case of \lemref{lem:composition and based homotopy} in which the first homotopy is redundant,
$$P = H\circ(\rho_{l''} \times \mathrm{id}_{I_N})\colon S(X, ll'l'')\times I_N = S(X, kk'l'')\times I_N \to  Y$$
is a based homotopy of based maps from $f\circ \rho_{k'}\circ\rho_{l''}$ to $g\circ\rho_{l'}\circ\rho_{l''} =  g\circ\rho_{l'l''}$.  Also,
$$P'= G\circ(\rho_{l''} \times \mathrm{id}_{I_M})\colon S(X, ll''l')\times I_M = S(X, mm'l')\times I_M \to  Y$$
is a based homotopy of based maps from $g\circ\rho_{l''}\circ\rho_{l'} =  g\circ\rho_{l''l'}$ to $h\circ\rho_{m'}\circ\rho_{l'}$.
Then we piece these homotopies together into $P''\colon S(X, kk'l'')\times I_{N+M} =  S(X, mm'l')\times I_{N+M}\to  Y$ defined by
$$P''(x, t) = \begin{cases}  P(x, t) & 0 \leq t \leq N\\ P'(x, t-N) & N \leq t \leq N+M.\end{cases}$$
The two parts of this homotopy agree on their overlap (when $t = N$) and assemble into a continuous whole by the same argument as was used in the proof of \lemref{lem:composition and based homotopy}.  It is straightforward to check that $P''$ is a based homotopy of based maps from  $f\circ \rho_{k'l''}$ to $h\circ\rho_{l'm'}$  The result follows.
\end{proof}

We defined  subdivision-based homotopy of based loops in \defref{def: subdn homotopic loops}.

\begin{appproposition}\label{prop: subdn-bsd homotopy is equiv of loops}
For a based digital image $Y \subseteq \Z^n$, consider the set
$$\mathcal{S} = \left\{ \alpha\colon I_N \to Y \mid \alpha \text{ is a based loop of length } N \geq 1\right\}$$
of all based loops in $Y$ of any length.
Subdivision-based homotopy of based loops is an equivalence relation on the set $\mathcal{S}$.
\end{appproposition}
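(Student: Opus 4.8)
The plan is to follow the pattern of \propref{prop: subdn-bsd homotopy is equiv}, adapted to based loops; the only extra care needed is to ensure that every homotopy produced keeps \emph{both} endpoints of the loop pinned at the basepoint $y_0$ throughout. Reflexivity is immediate: given a based loop $\alpha\colon I_N \to Y$, take $k = l = 1$, so that $\alpha\circ\rho_1 = \alpha$, and use the constant homotopy $H(s,t) = \alpha(s)$, which is trivially a based homotopy of based loops. For symmetry, if $\alpha\circ\rho_k \approx \beta\circ\rho_l$ via a based homotopy of based loops $H\colon I_{kM+k-1}\times I_R \to Y$, then reversing the $t$-coordinate, i.e.\ $\overline{H}(s,t) = H(s, R-t)$, gives a based homotopy of based loops from $\beta\circ\rho_l$ to $\alpha\circ\rho_k$ (the conditions $\overline{H}(0,t) = \overline{H}(kM+k-1, t) = y_0$ are inherited from those for $H$); since $l(N+1) = k(M+1)$ as well, this witnesses subdivision-based homotopy of $\beta$ and $\alpha$ as based loops.

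The substance is transitivity. Suppose $\alpha\colon I_M \to Y$, $\beta\colon I_N \to Y$ and $\delta\colon I_P \to Y$ are based loops with $\alpha$ subdivision-based homotopic to $\beta$ and $\beta$ subdivision-based homotopic to $\delta$. Then there are $k, l \geq 1$ with $k(M+1) = l(N+1)$ and a based homotopy of based loops $H$ from $\alpha\circ\rho_k$ to $\beta\circ\rho_l$, and there are $l', p \geq 1$ with $l'(N+1) = p(P+1)$ and a based homotopy of based loops $G$ from $\beta\circ\rho_{l'}$ to $\delta\circ\rho_p$. The idea is to push both homotopies onto the common subdivision $S(I_N, ll')$ of the domain of $\beta$. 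Using the second part of \lemref{lem: composition based homotopy loops} (precomposing a based homotopy of based loops with $\rho_{l'}\times\mathrm{id}$, respectively $\rho_l\times\mathrm{id}$) together with the standard identifications $S\big(S(I_M,k),l'\big) \cong S(I_M, kl')$ and $\rho_k\circ\rho_{l'} = \rho_{kl'}$, we obtain a based homotopy of based loops from $\alpha\circ\rho_{kl'}$ to $\beta\circ\rho_{ll'}$, and another from $\beta\circ\rho_{l'l}$ to $\delta\circ\rho_{pl}$, both now loops on $S(I_N, ll') = I_{ll'N + ll' - 1}$.

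Next I would concatenate these two homotopies in the $t$-direction, exactly as in the proof of \propref{prop: subdn-bsd homotopy is equiv} (and \lemref{lem:composition and based homotopy}): after first lengthening whichever of the two is shorter in the $t$-variable --- just as at the start of the proof of part (a) of \lemref{lem: loop product basics} --- so that they have equal length, the two halves agree on their common slice $\beta\circ\rho_{ll'}$ and piece together into a continuous homotopy. Since each half fixes both endpoint values at $y_0$ for every $t$, so does the concatenation; hence it is a based homotopy of based loops from $\alpha\circ\rho_{kl'}$ to $\delta\circ\rho_{pl}$. Finally, the index arithmetic works out: from $k(M+1) = l(N+1)$ and $l'(N+1) = p(P+1)$ we get $kl'(M+1) = ll'(N+1) = pl(P+1)$, so that the domains $S(I_M, kl')$ and $S(I_P, pl)$ genuinely coincide and, with $a = kl'$ and $b = pl$, the pair satisfies $a(M+1) = b(P+1)$. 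This exhibits $\alpha$ and $\delta$ as subdivision-based homotopic as based loops, completing transitivity.

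I do not expect any genuinely hard step here; the argument is essentially the loop-version of \propref{prop: subdn-bsd homotopy is equiv}. The only points that require attention are bookkeeping ones: tracking the index arithmetic so that the two composite loops end up with the same domain, handling the identifications of iterated subdivisions $S\big(S(I_N, l), l'\big) \cong S(I_N, ll')$ and the corresponding equalities of projections, and verifying at each stage that the ``based loop'' conditions $H(0, t) = H(\text{right endpoint}, t) = y_0$ survive reversal, precomposition with $\rho \times \mathrm{id}$, and concatenation --- all routine, but worth stating explicitly since, as remarked elsewhere, the based setting is technically distinct from the unbased one treated in \cite{LOS19a, LOS19b}.
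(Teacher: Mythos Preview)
Your proposal is correct and follows essentially the same approach as the paper: precompose each given homotopy with an appropriate $\rho \times \mathrm{id}$ to land on a common subdivision of the domain of $\beta$, then stack the two resulting homotopies in the $t$-direction, checking that the endpoint conditions survive throughout. One small remark: the lengthening step you mention (equalizing the $t$-lengths before concatenating) is unnecessary here---the paper simply stacks the two homotopies directly as $P''(s,t) = P(s,t)$ for $0 \le t \le S$ and $P''(s,t) = P'(s,t-S)$ for $S \le t \le S+T$, since concatenation in the time direction does not require equal lengths; but your version is harmless and the rest matches the paper's argument.
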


\begin{proof}
This is basically a special case of \propref{prop: subdn-bsd homotopy is equiv}; we just need to be sure that both ends of a loop are preserved through  homotopies.
The proofs of reflexivity and symmetry here are more-or-less tautological from \defref{def: subdn homotopic loops}.  We omit their details, and focus on transitivity.  Suppose we have based loops $\alpha \colon I_L \to Y$,  $\beta \colon I_M \to Y$, and $\gamma \colon I_N \to Y$, with $\alpha$ and $\beta$ subdivision-based homotopic as based loops, and $\beta$ and $\gamma$ subdivision-based homotopic as based loops.  We wish to show that $\alpha$ and $\gamma$ are subdivision-based homotopic as based loops.

For $\alpha$ and $\beta$ , we have $l, m$ with $l(L+1) = m(M+1)$ and a based homotopy of based loops $H \colon I_{lL + l-1}\times I_S = I_{mM + m-1}\times I_S \to Y$ from
$\alpha\circ \rho_l$ to $\beta\circ \rho_m$.  Likewise, for $\beta$ and $\gamma$, we have $\mu, \nu$ with $\mu(M+1) = \nu(N+1)$ and a based homotopy of based loops $G \colon I_{\mu M + \mu-1}\times I_T = I_{\nu N + \nu-1}\times I_T \to Y$ from $\beta\circ \rho_{\mu}$ to $\gamma\circ \rho_{\nu}$.

From the (proof of the) second point of \lemref{lem: composition based homotopy loops},
$$P = H\circ(\rho_{\mu} \times \mathrm{id}_{I_S})\colon S(I_L, l\mu)\times I_S = S(M, m\mu)\times I_S \to  Y$$
is a based homotopy of based loops from $\alpha\circ\rho_l\circ\rho_{\mu}$ to $\beta\circ\rho_m\circ\rho_{\mu} =  \beta\circ\rho_{m\mu}$.  Also,
$$P'= G\circ(\rho_{m} \times \mathrm{id}_{I_T})\colon S(I_M, \mu m)\times I_T = S(I_N, \nu m)\times I_T \to  Y$$
is a based homotopy of based loops from $\beta\circ\rho_{m\mu} = \beta\circ\rho_{\mu}\circ\rho_{m}$ to $\gamma\circ\rho_{\nu}\circ\rho_{m}$.
Then we piece these homotopies together into $P''\colon S(I_L, l\mu)\times I_{S+T} =  S(I_N, \nu m)\times I_{S+T}\to  Y$ defined by
$$P''(s, t) = \begin{cases}  P(s, t) & 0 \leq t \leq S\\ P'(s, t-S) & S \leq s \leq S+T.\end{cases}$$
The two parts of this homotopy agree on their overlap (when $t = S$) and assemble into a continuous whole by the same argument as was used in the proof of \lemref{lem:composition and based homotopy}.  It is straightforward to check that $P''$ is a homotopy  from
$\alpha\circ\rho_{l\mu}$ to  $\gamma\circ\rho_{m\nu}$, and also that $P''(0, t) = P''(l\mu L + l \mu -1, t) = P''(m\nu N + m \nu -1, t) = y_0$, so that $P''$ is a based homotopy of based loops.  The result follows.
\end{proof}

In this last item of this collection of material on based maps and homotopies, we define  a relation between digital images that is less rigid than the relation of homotopy equivalence (\emph{per}  \defref{def: based h.e.}) and to which we refer from several places in the Introduction and the main body.

\begin{appdefinition}[Subdivision-Based Homotopy Equivalence]\label{def: subdn homotopy equiv}
For based digital images $X \subseteq \Z^m$ and $Y\subseteq \Z^n$  and some $k, l \geq 1$, suppose that we have the following data:
\begin{itemize}
\item[(a)] Based maps $f\colon S(X, k) \to Y$ and $g\colon S(Y, l) \to X$;

\item[(b)] Based maps (coverings) $F$ and $G$ that make  the following diagrams commute
$$\xymatrix{ S(X, kl) \ar[d]_{\rho_{l}} \ar[r]^-{F} & S(Y, l) \ar[d]^{\rho_{l}}\\
S(X, k) \ar[r]_-{f} & Y} \quad \text{and} \quad \xymatrix{ S(Y, kl) \ar[d]_{\rho_{k}} \ar[r]^-{G} & S(X, k) \ar[d]^{\rho_{k}}\\
S(Y, l) \ar[r]_-{g} & X};$$
We say that $F$ is a \emph{cover} of $f$ and $G$ is a \emph{cover} of $g$.

\item[(c)]  $f\circ G\colon S(Y, kl) \to Y$ subdivision-based homotopic to $\mathrm{id}_Y\colon Y \to Y$ and $g\circ F\colon S(X, kl) \to X$ subdivision-based homotopic to $\mathrm{id}_X\colon X \to X$.
\end{itemize}
Then we say that $X$ and $Y$ are \emph{subdivision-based homotopy equivalent}.
\end{appdefinition}

\begin{remark}
With $k  = l = 1$ and $F=f$, $G=g$ and the subdivision-based homotopies of (c) ordinary based homotopies, the notion of subdivision-based  homotopy equivalence reduces to that of based homotopy equivalence.
\end{remark}

\subsection{Results about Subdivision of Maps from \cite{LOS19b}}

For the convenience of the reader, we state two results from \cite{LOS19b} that we use in the proof of \thmref{thm: rho induces iso on pi1} and its annexe \appref{sec: technical}.  We refer to \cite{LOS19b} for all details of the proofs, and simply comment on some of the ingredients used.

\begin{apptheorem}[Th.4.1 and Cor.6.2.(C) of \cite{LOS19b}]\label{thm: path odd subdivision map}
Suppose we are given $\alpha\colon I_N \to Y$, a path of length $N$ in any digital image $Y \subseteq \Z^n$.  For any odd $2k+1 \geq 3$,  there is  a canonical map of subdivisions
$$\widehat{\alpha}\colon S(I_N, 2k+1) = I_{N(2k+1)+2k}  \to S(Y, 2k+1),$$
or \emph{standard cover of} $\alpha$,
that covers the given path, in the sense that the following diagram commutes:
$$\xymatrix{ S(I_N, 2k+1) \ar[d]_{\rho_{2k+1}} \ar[r]^-{\widehat{\alpha}} & S(Y, 2k+1) \ar[d]^{\rho_{2k+1}}\\
I_N \ar[r]_-{\alpha} & Y}$$
For $Y$ a based digital image and $\alpha$ a based loop, with $\alpha(0) = \alpha(N) = y_0$, then the standard cover $\widehat{\alpha}$ is a based loop: we have
$\widehat{\alpha}(0) = \widehat{\alpha}\big( (2k+1)N + 2k \big) = \overline{y_0}$.
\end{apptheorem}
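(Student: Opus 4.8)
The plan is to build the standard cover $\widehat{\alpha}$ by hand, block by block over the subdivided interval, routing through the centres of the relevant sub-cubes of $S(Y,2k+1)$, and then to verify the two defining properties — commutativity of the covering square and continuity — by direct inspection. Write $\alpha(i)=\big(a_1(i),\dots,a_n(i)\big)$ for $i\in I_N$; continuity of $\alpha$ gives $|a_j(i+1)-a_j(i)|\le 1$ for all $i$ and $j$. Recall $S(I_N,2k+1)=I_{N(2k+1)+2k}$, and decompose it using the centres $\overline{i}=(2k+1)i+k$ of the blocks $S(i,2k+1)$: it is the union of the two end segments $[0,\overline{0}]$ and $[\overline{N},(2k+1)N+2k]$ together with the segments $[\overline{i},\overline{i+1}]$ for $0\le i\le N-1$, each of length $2k+1$. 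On the end segments I would set $\widehat{\alpha}$ equal to the constants $\overline{\alpha(0)}$ and $\overline{\alpha(N)}$, the centres of $S(\alpha(0),2k+1)$ and $S(\alpha(N),2k+1)$; on $[\overline{i},\overline{i+1}]$ I would interpolate linearly in each coordinate,
$$\widehat{\alpha}(\overline{i}+s)_j=(2k+1)a_j(i)+k+s\big(a_j(i+1)-a_j(i)\big),\qquad 0\le s\le 2k+1,$$
noting that this equals $\overline{\alpha(i)}$ at $s=0$ and $\overline{\alpha(i+1)}$ at $s=2k+1$, so the pieces agree on overlaps and $\widehat{\alpha}$ is well-defined.

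The key step is then the commutativity $\rho_{2k+1}\circ\widehat{\alpha}=\alpha\circ\rho_{2k+1}$, which simultaneously certifies that $\widehat{\alpha}$ takes values inside $S(Y,2k+1)$ at all. For $0\le s\le k$ one has $\rho_{2k+1}(\overline{i}+s)=i$ and, because $a_j(i+1)-a_j(i)\in\{-1,0,1\}$, the term $k+s\big(a_j(i+1)-a_j(i)\big)$ stays in $[0,2k]$; hence $\lfloor\widehat{\alpha}(\overline{i}+s)_j/(2k+1)\rfloor=a_j(i)$ for every $j$, so $\rho_{2k+1}\widehat{\alpha}(\overline{i}+s)=\alpha(i)=\alpha\rho_{2k+1}(\overline{i}+s)$. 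For $k+1\le s\le 2k+1$ one rewrites the $j$th coordinate as $(2k+1)a_j(i+1)+k-(2k+1-s)\big(a_j(i+1)-a_j(i)\big)$ and runs the identical estimate with $2k+1-s\in[0,k]$, obtaining $\alpha(i+1)$; the end segments are immediate. Continuity is then essentially automatic: the domain is a one-dimensional interval, so adjacency just means consecutive integers, within a block $\widehat{\alpha}(\overline{i}+s+1)_j-\widehat{\alpha}(\overline{i}+s)_j=a_j(i+1)-a_j(i)\in\{-1,0,1\}$ in every coordinate, and where two pieces abut the defining formulas coincide (they share a common centre), so adjacent points always map to adjacent points. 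Finally, in the based case $\alpha(0)=\alpha(N)=y_0$ forces $\overline{\alpha(0)}=\overline{\alpha(N)}=\overline{y_0}$ by the formula of \defref{def:subdn basepoints} for an odd subdivision, and $\widehat{\alpha}$ is constant equal to $\overline{y_0}$ on both end segments, so $\widehat{\alpha}(0)=\widehat{\alpha}\big((2k+1)N+2k\big)=\overline{y_0}$ and $\widehat{\alpha}$ is a based loop.

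I expect the only genuinely delicate point to be the commutativity/range check just described — showing the interpolated integer points really land in the prescribed sub-cubes $S(\alpha(i),2k+1)$ — and this is exactly where the oddness of $2k+1$ is used: a genuine centre $\overline{\alpha(i)}$ exists to route through, and budgeting exactly $k$ steps on each side of it keeps $k+s\big(a_j(i+1)-a_j(i)\big)$ within range. (For the two-dimensional-domain version needed to cover homotopies, which is the genuinely lengthy part of \cite{LOS19b}, one must perform an analogous interpolation over rectangles with substantially more bookkeeping; for the path statement here the one-dimensional argument above suffices, and it is consistent with the explicit coordinate formula recorded in \lemref{lem: cover rho-alpha}.)
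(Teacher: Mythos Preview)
Your construction is correct and is exactly the same as the paper's: the paper's gloss records precisely the same piecewise definition (constant at $\overline{\alpha(0)}$ and $\overline{\alpha(N)}$ on the two end segments, and the linear interpolation $\widehat{\alpha}(\overline{i}+t)=\overline{\alpha(i)}+t[\alpha(i+1)-\alpha(i)]$ between centres), deferring the detailed checks to \cite{LOS19b}. You have in fact supplied more than the paper's gloss by writing out the commutativity/range argument and the continuity check explicitly, and your formula matches the one recorded in \lemref{lem: cover rho-alpha}.
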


\begin{proof}[Proof (Gloss on the proof of Theorem 4.1 of \cite{LOS19b})]
Refer to \defref{def:subdn basepoints} for our conventions and notation on basepoints and centres.  For instance,  
for $i \in I_N \subseteq \Z$ we  write $\overline{i} = (2k+1)i+k \in S(i, 2k+1)$.
Then the standard cover may be described as
\begin{equation}\label{eq: standard cover}
\widehat{\alpha}(j) =\begin{cases}  \ \  \overline{\alpha(0)} & 0 \leq  j < k \\
\\
\widehat{\alpha}(\overline{i} + t) =  \overline{\alpha(i)} + t\big[   \alpha(i+1) - \alpha(i)   \big]  & 0 \leq i \leq N, 0 \leq t \leq 2k\\
\\
\overline{\alpha(N)} & \overline{N} \leq  j \leq \overline{N} + 2k  \\
 \end{cases}
 \end{equation}
In the middle line, we use coordinate-wise (vector) addition and scalar multiplication.   If $\alpha(i) = (y_1, \ldots, y_n)$ and  $\alpha(i+1) = (y'_1, \ldots, y'_n)$, then the coordinates of the middle line of  \eqref{eq: standard cover} may be specified as
\begin{equation}\label{eq: coordinate standard cover}
\widehat{\alpha}(\overline{i} + t) =  \big( (2k+1)y_1 +k + t[  y'_1 - y_1  ], \ldots, (2k+1)y_n + k + t[  y'_n - y_n  ]\big)
 \end{equation}
for $0 \leq t \leq 2k$.
  A basic feature of the way in which we construct $\widehat{\alpha}$ is that it satisfies $\widehat{\alpha}\big(\overline{i}\big) = \overline{ \alpha(i)}$.  In particular, if $\alpha$ is a based loop, then we have $\widehat{\alpha}\big(\overline{0}\big) = \widehat{\alpha}\big(\overline{N}\big) = \overline{ y_0}$.  But also, in the construction, we define $\widehat{\alpha}(j) = \overline{ \alpha(0)}$ each $j$ with $0 \leq j \leq \overline{0} = k$, and $\widehat{\alpha}(j) = \overline{ \alpha(N)}$ for each $j$ with $(2k+1)N + k = \overline{N} \leq j \leq (2k+1)N +2k$.  With $0$ the basepoint of  $S(I_N, 2k+1) = I_{(2k+1)N + 2k}$ and $\overline{ y_0}$ the basepoint of $S(Y, 2k+1)$, this justifies the last part of the theorem.
\end{proof}

The following  result from  \cite{LOS19b} functions as a homotopy lifting result for based paths or loops.

\begin{apptheorem}[Th.5.1 and Cor.6.2.(C) of \cite{LOS19b}]\label{thm: 2-D subdivision map rectangle}
Suppose we are given a map $H \colon I_M \times I_N \to Y$ with $Y \subseteq \Z^n$ any  digital image.  For any $k \geq 1$,  there is  a canonical choice of map $\widehat{H}\colon S(I_M, 2k+1) \times S(I_N, 2k+1) \to S(Y, 2k+1)$ that makes the following diagram commute:
$$\xymatrix{ S(I_M, 2k+1) \times S(I_N, 2k+1) \ar[d]_{\rho_{2k+1} \times \rho_{2k+1} =\rho_{2k+1}} \ar[r]^-{\widehat{H}} & S(Y, 2k+1) \ar[d]^{\rho_{2k+1}}\\
I_M \times I_N \ar[r]_-{H} & Y}$$
Furthermore, if $H$ is a based homotopy of based loops from $\alpha\colon I_M \to Y$ to $\beta\colon I_M \to Y$, then  $\widehat{H}\colon I_{(2k+1)M+ 2k} \times I_{(2k+1)N+ 2k} \to S(Y, 2k+1)$ is a based homotopy of based loops from $\widehat{\alpha}\colon I_{(2k+1)M+ 2k} \to S(Y, 2k+1)$ to $\widehat{\beta} \colon I_{(2k+1)M+ 2k} \to S(Y, 2k+1)$, the standard covers of $\alpha$ and $\beta$ as in \thmref{thm: path odd subdivision map}.
\end{apptheorem}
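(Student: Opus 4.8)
The plan is to follow the blueprint of \thmref{thm: path odd subdivision map} (Theorem~4.1 of \cite{LOS19b}), promoting the construction of the standard cover of a path to the $2$-dimensional domain $I_M \times I_N$. First one decomposes the subdivided domain, using $S(I_M, 2k+1) \times S(I_N, 2k+1) \cong S(I_M \times I_N, 2k+1)$ and the centres $\overline{i} = (2k+1)i + k$, $\overline{j} = (2k+1)j + k$ in each factor, into the grid of \emph{central squares} $[\overline{i}, \overline{i+1}] \times [\overline{j}, \overline{j+1}]$ (for $0 \le i \le M-1$, $0 \le j \le N-1$) together with the boundary strips coming from $[0, \overline{0}]$ and $[\overline{M}, \overline{M}+2k]$ (resp. $[\overline{N}, \overline{N}+2k]$) in each factor. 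Since $H$ is continuous, for each $(i,j)$ the four values $H(i,j)$, $H(i+1,j)$, $H(i,j+1)$, $H(i+1,j+1)$ are pairwise adjacent in $Y$, hence their centres lie in a single $2\times 2$ block of subdivision cells of $S(Y, 2k+1)$; on the corresponding central square one defines $\widehat{H}$ by interpolating among these four centres, and on each boundary strip one sets $\widehat{H}$ equal to the relevant constant centre (in particular $\overline{y_0}$ along the strips over $\{0\}$ and $\{M\}$ in the first factor). The interpolation is not naive affine/bilinear interpolation — as in the $1$-dimensional case, and as in \lemref{lem: homotopy cover-rho-alpha beta} of \appref{sec: technical}, the coordinate changes must be applied one lattice step at a time, via a device like the coordinate-centring function $C$, so that a unit move of the argument produces at most a unit move in each coordinate of the value. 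By construction, $\widehat{H}$ restricted to any edge of the domain coincides with the $1$-dimensional standard cover of the restriction of $H$ to that edge.

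Given such a $\widehat{H}$, the commutativity $\rho_{2k+1}\circ\widehat{H} = H\circ(\rho_{2k+1}\times\rho_{2k+1})$ is built in and is verified cell by cell: each point of the central square over $(i,j)$ is sent into $S(H(i,j), 2k+1)$ or an adjacent cell, and the interpolation is arranged so that $\rho_{2k+1}$ returns the correct value of $H$ — exactly as $\rho_{2k+1}\circ\widehat{\alpha} = \alpha\circ\rho_{2k+1}$ is checked in \thmref{thm: path odd subdivision map}.

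The hard part will be the continuity of $\widehat{H}$: one must show that for adjacent points $(s,t)\sim(s',t')$ in $S(I_M, 2k+1)\times S(I_N, 2k+1)$ one has $\widehat{H}(s,t)\sim_{S(Y, 2k+1)}\widehat{H}(s',t')$. Within a single central square (or a single boundary strip) this reduces, coordinate by coordinate, to the staircase property of the interpolant. The genuinely delicate adjacencies are those that straddle a grid line separating two central squares, those at a grid corner where four central squares meet, and those at the transition between a central square and a boundary strip; there one partitions a neighbourhood of the grid line (resp. corner) into the relevant regions and checks the finitely many combinatorial positions of $(s,t)$ and $(s',t')$, as in the continuity arguments of \appref{sec: technical}. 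The oddness of $2k+1$ — the presence of a genuine centre point in each cell, allowing the interpolants on neighbouring squares to be matched through that centre — is what makes this work; this is the same phenomenon that forces $k\ge 2$ in part~(c) of \lemref{lem: loop product basics} and in \lemref{lem: homotopy cover-rho-alpha beta}.

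Finally, for the \textbf{furthermore} clause: if $H$ is a based homotopy of based loops from $\alpha$ to $\beta$, then by the edge-restriction remark above the bottom edge $S(I_M, 2k+1)\times\{0\}$ carries the $1$-dimensional standard cover of $H(-,0)=\alpha$, namely $\widehat{\alpha}$, and the top edge $S(I_M, 2k+1)\times\{(2k+1)N+2k\}$ carries $\widehat{\beta}$; and since $H(0,-)\equiv y_0\equiv H(M,-)$, the two vertical boundary strips containing $(0,-)$ and $((2k+1)M+2k,-)$ are constant at $\overline{y_0}$, so $\widehat{H}(0,t)=\widehat{H}((2k+1)M+2k,t)=\overline{y_0}$ for all $t$. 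Thus $\widehat{H}$ is a based homotopy of based loops from $\widehat{\alpha}$ to $\widehat{\beta}$, as claimed. The full details of the construction and of the continuity verification are in the proof of Theorem~5.1 of \cite{LOS19b}.
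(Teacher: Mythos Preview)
Your proposal is correct and matches the paper's approach: like the paper, you defer the actual construction and continuity verification to \cite{LOS19b} and focus on deducing the ``furthermore'' clause from the fact that $\widehat{H}$ restricts on each edge of the rectangle to the $1$-dimensional standard cover of the corresponding restriction of $H$, so that the bottom and top edges carry $\widehat{\alpha}$ and $\widehat{\beta}$ while the vertical edges (covers of the constant loop at $y_0$) are constant at $\overline{y_0}$. One small imprecision: for a general $H$ the boundary strips are not literally ``constant at a centre'' but rather constant in the short direction and equal to the $1$-dimensional standard cover in the long direction; this only becomes globally constant (at $\overline{y_0}$) on the vertical strips in the based-homotopy case, which is exactly what the paper's gloss emphasises.
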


\begin{proof}[Proof (Gloss on the proof of Theorem 5.1 of \cite{LOS19b})]
In the proof of \cite[Th.5.1]{LOS19b}, we begin by defining the covering homotopy $\widehat{H}$ around the edges of $S(I_M, 2k+1) \times S(I_N, 2k+1)$ using the standard covers of the paths in $Y$ gotten by restricting $H$ to the edges of $I_M \times I_N$.  In particular, this construction yields $\widehat{H}(s, 0) = \widehat{\alpha}(s)$ and
$\widehat{H}(s, (2k+1)N+ 2k) = \widehat{\beta}(s)$ for $s \in I_{(2k+1)M+ 2k}$.  Also, along the vertical edges, since $H$ is a based homotopy of based loops, we have $H(0, t)  = y_0 = H((2k+1)M+ 2k, t)$ for $t \in I_{(2k+1)N+ 2k}$.  Now for the constant map $C_N \colon I_N \to Y$, the standard cover is again a constant map (at $\overline{y_0} \in S(Y, 2k+1)$): we have
$\widehat{C_N} = C_{(2k+1)N+ 2k}\colon I_{(2k+1)N+ 2k} \to S(Y, 2k+1)$.  Hence, again from the construction of  the covering homotopy $\widehat{H}$, we have
$$\widehat{H}(0, t) =  \overline{y_0} =  \widehat{H}((2k+1)M+ 2k, t)$$
for $t \in I_{(2k+1)N+ 2k}$.  Hence $\widehat{H}$ is indeed a based homotopy of based loops.
\end{proof}


\providecommand{\bysame}{\leavevmode\hbox to3em{\hrulefill}\thinspace}
\providecommand{\MR}{\relax\ifhmode\unskip\space\fi MR }
\providecommand{\MRhref}[2]{%
  \href{http://www.ams.org/mathscinet-getitem?mr=#1}{#2}
}
\providecommand{\href}[2]{#2}

\end{document}